\theoremstyle{plain}
\newtheorem{theorem}{Theorem}[section]
\newtheorem{corollary}[theorem]{Corollary}
\newtheorem{proposition}[theorem]{Proposition}
\newtheorem{lemma}[theorem]{Lemma}
\newtheorem{conjecture}[theorem]{Conjecture}
\newtheorem*{theorem*}{Theorem}
\theoremstyle{remark}
\newtheorem{remark}[theorem]{Remark}
\newtheorem{definition}[theorem]{Definition}
\DeclareMathOperator{\p}{\mathbb{P}}
\newcommand{\cE}{\mathcal{E}}
\newcommand{\cF}{\mathcal{F}}
\newcommand{\cO}{\mathcal{O}}
\newcommand{\cU}{\mathcal{U}}
\newcommand{\cL}{\mathcal{L}}
\newcommand{\de}{\operatorname{d}}
\newcommand{\sC}{\mathscr{C}}
\newcommand{\sG}{\mathscr{G}}
\title{The Phase Transition of the Voter Model\\ on Evolving Scale-Free Networks}
\author{John Fernley \orcidlink{0000-0002-6635-4341}}
\affil{Department of Statistics,
University of Warwick,
United Kingdom}
\date{\today}
\renewcommand\labelenumi{(\roman{enumi})}
\renewcommand\theenumi\labelenumi
\begin{document}

\maketitle

\begin{abstract}
The voter model is a classical interacting particle system explaining consensus formation on a social network. Real social networks feature not only a heterogeneous degree distribution but also connections changing over time. 
We study the voter model on a rank one scale-free network evolving in time by each vertex \emph{updating} (refreshing its edge neighbourhood) at any rate $\kappa=\kappa(N)$. 

We find the dynamic giant component phase transition in the consensus time of the voter model: when $\kappa\ll \nicefrac{1}{N}$, the subcritical graph parameters are slower by a factor of $\nicefrac{N}{\log N}$.
Conversely, when $\kappa \gg 1$ the effect of the giant is removed completely and so for either graph parameter case we see consensus time on the same order as in the static supercritical case (up to polylogarithmic corrections). The intermediate dynamic speeds produce consensus time for subcritical network parameters longer not by the previous factor $\nicefrac{N}{\log N}$, but by the factor $\nicefrac{1}{\kappa}$.

\vspace{1em}

{\footnotesize \noindent \emph{Keywords:} consensus dynamics; heterogeneous network; dynamic graph; power-law degree.

\noindent \emph{2020 Mathematics Subject Classification:} Primary 91D30; secondary 05C82, 60K35, 82C22.}
\end{abstract}

\section{Introduction}

The voter model was introduced to model competing species in space by \cite{clifford1973model}, and in the mathematical literature for its interest as an infinite system \cite{holley1975ergodic}, albeit with the suggestive name ``voter''. If we look to understand the phenomenology of consensus formation through the simplest possible model, this is a likely point to arrive: put agents on a graph with one of two opinions and have them imitate a random neighbour iteratively until local consensus building forms a global consensus.

Both those foundational works put the voters on $\mathbb{Z}^d$ but in network science we want to consider finite graphs, moreover models of small diameter \cite{van2016random}, Section 1.6.1. On a fixed graph, given (as is widely believed) that a network model has faster order of mixing than meeting, its consensus time is well understood up to factors of the mixing time \cite{coxchenchoi}.

More realistically, networks change in time, less considered in the literature. Then in the dual model \cite{liggett85} we have coalescing walkers which no longer move independently, and in this article we investigate how the coalescence speed is affected.

On the vertex set $[N]:=\{1,\dots,N\}$ we put the evolving simple Norros--Reittu graph \cite{Norros_Reittu_2006} with vertex update rate $\kappa$. This dynamic graph has a stationary distribution of independent edges, each $\{i,j\}$ being present with probability
\begin{equation}\label{eq_nr}
p_{ij}=1-\exp\left(-
\beta N^{2\gamma-1}i^{-\gamma}j^{-\gamma}
\right).
\end{equation}

This expression is the simple graph version of the multigraph with Poissonian number of edges $\{i,j\}$ of expectation exactly $\beta N^{2\gamma-1}i^{-\gamma}j^{-\gamma}$, the multigraph then having $\de(i) \sim \operatorname{Pois}(w(i))$ where
\begin{equation}\label{eq_weight}
w(i)=
\frac{\beta}{N}\left(
\frac{N}{i}
\right)^\gamma
\sum_{j=1}^N
\left(
\frac{N}{j}
\right)^\gamma
\sim
\frac{\beta}{1-\gamma}\left(
\frac{N}{i}
\right)^\gamma.
\end{equation}

The Norros Reittu graph is part of the ``rank $1$'' equivalence class of network models, so called because the expectation of the adjacency matrix has rank $1$, including for example the configuration model. Mathematically however, it is more simple to have the exact independent Poisson distributions of the Norros-Reittu definition.

\begin{definition}
The event $(E_N)_N$ holds \emph{with high probability} if $\lim_{N\rightarrow\infty}\p(E_N)=1$.
\end{definition}

\begin{definition}
Graphs $(G_N)_N$, $(G'_N)_N$ are \emph{asymptotically equivalent} if they can be coupled with high probability: such that $\lim_{N\rightarrow\infty}\p(G_N=G'_N)=1$.
\end{definition}

After \emph{flattening} the previous multigraph construction to the largest simple graph it contains, we arrive at the expression \eqref{eq_nr}. Note for $\gamma=0$ this is asymptotically equivalent to an Erd\H{o}s--R\'enyi graph but otherwise for $\gamma \in (0,1)$ a sparse scale-free network. Where $\gamma<\nicefrac{1}{2}$ and so every edge probability tends to $0$, it is asymptotically equivalent to many other natural scale-free network models (see \cite{van2016random}, Theorem 6.18), most prominently the Chung--Lu model \cite{chung2002} where the edge probabilities are given by $\beta N^{2\gamma-1}i^{-\gamma}j^{-\gamma}$.

Because the model has independent edges, it is convenient to give it the vertex updating dynamics of \cite{jacob2017contact,morters2019,jacob2024metastability}, resulting in the following model.

\begin{definition}[The evolving scale-free network]
For any $\kappa \geq 0$, the dynamic graph $G_\kappa(t)$ is a Markov chain (in $\sG_N$ the space of graphs on $[N]$) with stationary distribution $\cL_{\rm SNR}$ of independent edges with the probabilities \eqref{eq_nr}.

The dynamic of the graph attaches a Poisson process of rate $\kappa$ to each vertex such that every ring of some $i \in [N]$ is a \emph{vertex update}: all edges incident to $i$ are removed and the $N-1$ incident potential edges are resampled independently according to \eqref{eq_nr}.
\end{definition}

There are other ways to produce a dynamic graph, in particular dynamical percolation as investigated in \cite{thomassousi} for the same symmetric generator walkers of this article. However, this dynamical percolation does not produce an inhomogeneous degree distribution. It would be possible to update edges at rate $\nicefrac{\kappa}{N}$ with the probability \eqref{eq_nr} for a model between ours and theirs as considered for the contact process in \cite{jacob2022contact}, but we consider vertex updating appropriate for social modelling in so far as it reflects agents moving themselves on the graph. We follow \cite{jacob2017contact,morters2019,jacob2024metastability} in calling this the \emph{evolving} graph as opposed to the \emph{dynamical} graphs where edges change completely independently.

If the above version of dynamical percolation were considered, we conjecture that the results would not change at all, as the graph mixing time has the same order in both cases and the effect of walkers walkers mixing in time $\Theta(\nicefrac{1}{\kappa})$ is also unaffected. However, the details of the argument would be not as clean as Lemma \ref{lem_sep} which only requires a single update, or as \cite{thomassousi}, Proposition 6.1, which exploits the symmetry of the homogenous graph.

We also want to model these agents sharing their opinions. 
For any $u \in (0,1)$ determining the bias in initial opinion, voter initial opinions are taken from the Bernoulli process with measure $\mu_u$ such that each $v \in [N]$ has independent $\operatorname{Bernoulli}(u)$ initial opinion $\mu_u(v)$. The proofs of Theorems \ref{theorem_static_cons} and \ref{theorem_dynamic_cons} could really be applied to any distribution of initial opinions a positive probability of $\Omega(N)$ differing opinions but we will not push this assumption.

\begin{definition}\label{def:voter}
Let $G_t = ([N],E_t)$ be a simple dynamic graph. Given $\eta \in \{ 0,1\}^N$, define $\eta^{i \leftarrow j} \in \{ 0,1\}^N$ as follows:
\[ \eta^{i \leftarrow j} (k) =
\begin{cases}
\eta(j), & k = i, \\
\eta(k), & k \neq i.
\end{cases}\]

The \emph{voter model} is the Markov process $(\eta_t)_{t \geq 0}$ with state space $\{0,1\}^N$ and with dynamic that for every directed pair of neighbours $\{i,j\} \in E_t$, the state $\eta \in \{0,1\}^V$ is replaced by $\eta^{i \leftarrow j}$ at rate $1$.
\end{definition}

The definition above, the process of interest in this article, is the voter model where all edges communicate opinion at rate $1$, dual to the variable-speed simple random walk (VSRW) with generator
\begin{equation}\label{eq_vsrw}
Q_{ij}=
\begin{cases}
\mathbbm{1}_{i \sim j}, & i \neq j,\\
-\de(i), & i=j.
\end{cases}
\end{equation}

This is also the original definition of the voter model due to \cite{clifford1973model}. 
We define the consensus time $T_{\rm cons}$ as the first time after which no vertex changes opinion. In the dynamic context of our main result Theorem \ref{theorem_dynamic_cons}, this will be a true consensus where some opinion in $\{0,1\}$ is shared by every vertex in $[N]$.

The usual voter model duality of \cite{liggett85} reverses time in the voter model, after which the path of the infection histories becomes a system of coalescing walkers. To be precise, by coalescing walkers we mean the model where every walker is indexed in $[N]$ by its initial site and then all walkers at a site move together to follow the walker of lowest index. From this duality and the fact that the stationary dynamic graph has the same distribution backward in time, we have the following result.

\begin{lemma}\label{lemma_coal_cons}
The dual system of coalescing walkers $(\xi_t)_t=(\xi^{(1)}_t, \dots, \xi^{(N)}_t)$ from $\xi_0=(1, \dots, N)$ has coalescence time $T_{\rm coal}$ when it hits a constant vector or meeting $T_{\rm meet}(i,j)$ when hitting the set with equal $i$\textsuperscript{th} and $j$\textsuperscript{th} elements. We find consensus in the stochastic interval
\[
T_{\rm meet} (i,j)
\mathbbm{1}_{\eta_0(i) \neq \eta_0(j)}
\preceq
T_{\rm cons}
\preceq
T_{\rm coal}.
\]
\end{lemma}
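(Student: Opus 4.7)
The plan is a standard voter--coalescing duality argument; the only ingredient beyond the static case is the time-reversibility of the evolving graph already flagged in the paragraph above the lemma.

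Concretely, on $[0,t]$ I would attach to each directed potential edge a rate-$1$ opinion clock and to each vertex a rate-$\kappa$ update clock, and drive $G_\kappa$ together with the voter model $\eta$ forward from $\eta_0\sim\mu_u$. Reading the opinion arrows backward from time $t$ produces walkers $(\xi^{(k)}_t)_{k\in[N]}$ that coalesce on contact and satisfy the pointwise duality identity $\eta_t(k)=\eta_0\bigl(\xi^{(k)}_t\bigr)$. The main obstacle is to verify that this backward system has the law of the forward coalescing system $(\xi_t)_t$ defined in the statement; this reduces to reversibility of the vertex-update dynamic under the stationary law $\cL_{\rm SNR}$ together with symmetry of the VSRW kernel \eqref{eq_vsrw}, both of which are immediate from the independent-edge construction of the Norros--Reittu graph.

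Given the duality, the upper bound is pointwise on the coupled space: at $t=T_{\rm coal}$ all walkers occupy a common vertex, so $\eta_t(k)=\eta_0(\xi^{(k)}_t)$ does not depend on $k$ and the voter configuration is already constant, giving $T_{\rm cons}\le T_{\rm coal}$ almost surely.

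For the lower bound I would pass to tail probabilities. The event $\{\eta_t(i)\neq \eta_t(j)\}$ is contained in $\{T_{\rm cons}>t\}$ and, by the duality identity, equals $\{\eta_0(\xi^{(i)}_t)\neq \eta_0(\xi^{(j)}_t)\}$. Conditioning on the walker trajectories and using that the i.i.d.\ $\operatorname{Bernoulli}(u)$ initial opinions are independent of the walker system and satisfy $\p(\eta_0(i)\neq \eta_0(j))=2u(1-u)$, I get
\[
\p\bigl(\eta_t(i)\neq \eta_t(j)\bigr)=2u(1-u)\,\p\bigl(T_{\rm meet}(i,j)>t\bigr)=\p\bigl(T_{\rm meet}(i,j)\mathbbm{1}_{\eta_0(i)\neq \eta_0(j)}>t\bigr),
\]
and the event containment then yields $\p(T_{\rm cons}>t)\ge \p\bigl(T_{\rm meet}(i,j)\mathbbm{1}_{\eta_0(i)\neq \eta_0(j)}>t\bigr)$ for every $t\ge 0$, which is the stochastic dominance claimed.
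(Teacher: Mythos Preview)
Your proposal is correct and matches the paper's approach: the paper does not give a detailed proof but states the lemma as an immediate consequence of the standard Liggett duality together with the fact that the stationary dynamic graph has the same distribution backward in time. Your graphical-construction argument, the use of reversibility of the vertex-update dynamic and symmetry of the VSRW kernel, and the tail-probability computation for the lower bound are exactly the fleshed-out version of that one-line justification.
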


In applying the above we can make any random choice of $i$ and $j$, typically two stationary (uniform) walkers. 

For comparison with our main theorem we write the case of static graphs. 
Here for consistency we define the static graph as the graph process $\left(G_0(t)\right)_t$, but note that with the parameter $\kappa=0$ this graph is constant in time.

This result uses the notation $\Theta_{\mathbb{P}}^{\log N}(\cdot)$ to denote a two-sided order bound satisfied with high probability and allowing a poly-logarithmic correction factor. That is,
\begin{equation}\label{eq_polylog_prob}
f(N)=\Theta_{\mathbb{P}}^{\log N}(g(N))
\iff
 \exists C >0 : \mathbb{P}\left(
 \frac{g(N)}{\log^{2C} N}
 <
\frac{f(N)}{\log^C N}
<
g(N)
 \right)\rightarrow 1.
\end{equation}

\begin{theorem}[ \cite{fernleyortgiese,fernley2022discursive} ]\label{theorem_static_cons}
Take $\gamma\in \left[0,1\right)$, $u \in (0,1)$, and $\beta \neq 1-2\gamma$. Then for the voter model on $\left(G_0(t)\right)_t$ with initial conditions of a stationary graph and the opinion of each $v \in [N]$ from the Bernoulli process $\mu_u(v)$, we have
\[
\mathbb{E}_{\mu_u}\left(T_{\text{\textnormal{cons}}}\big|G_0(0)\right)=
\begin{cases}
\Theta_{\mathbb{P}}^{\log N}\left( N \right), &\beta +2\gamma>1,\\
\Theta_{\mathbb{P}}^{\log N}\left( N^\gamma\right), &\beta +2\gamma<1.
\end{cases}
\]
\end{theorem}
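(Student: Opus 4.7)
The plan is to invoke Lemma \ref{lemma_coal_cons} and reduce to the meeting and coalescence times of the VSRW \eqref{eq_vsrw} on the static Norros--Reittu graph. With $u\in(0,1)$ fixed, the Bernoulli initial colouring produces a disagreeing pair with probability $2u(1-u)>0$, so the lower bound on $T_{\rm cons}$ follows from a suitable lower bound on $T_{\rm meet}(i,j)$; the upper bound $T_{\rm cons}\le T_{\rm coal}$ is attacked via a halving lemma in the spirit of Oliveira, which controls $\mathbb{E}T_{\rm coal}$ by $\max_{x,y}\mathbb{E}_{x,y}T_{\rm meet}$ up to a polylogarithmic factor. Conveniently, the VSRW is uniform on each connected component, which simplifies all meeting-time calculations.

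For the supercritical regime $\beta+2\gamma>1$, the branching-process heuristic applied to \eqref{eq_nr} yields mean offspring $\beta/(1-2\gamma)$ when $\gamma<\nicefrac{1}{2}$, placing us past the classical rank-$1$ phase transition so that a giant component $\sC$ of size $\Theta(N)$ appears with high probability. One first shows the VSRW mixes on $\sC$ in polylogarithmic time, using the hubs of weight $\Theta(N^\gamma)$ as expansion witnesses. Given uniform $\pi$ on $\sC$, the pair-meeting rate from stationarity is of order $1/|\sC|\asymp 1/N$, so $\mathbb{E}_{\pi,\pi}T_{\rm meet}=\Theta(N)$ up to a polylogarithmic factor; the halving lemma then gives the $\Theta^{\log N}_{\mathbb{P}}(N)$ upper bound on $T_{\rm coal}$. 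A matching lower bound comes from the observation that in any time window of size $o(N/\log^\alpha N)$ for some fixed $\alpha$, an overwhelming fraction of coalescing families remain distinct, so with high probability some initial Bernoulli disagreement survives.

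For the subcritical regime $\beta+2\gamma<1$, there is no giant and the components are dominated in size by the hub stars: the largest is the cluster of vertex $1$, which alone has $\Theta(N^\gamma)$ incident half-edges, giving component size $\Theta(N^\gamma)$ with high probability. Since the voter dynamics freeze as soon as every component is monochromatic, $T_{\rm cons}$ is the maximum over components of the internal consensus time; on any component of volume $V$ the internal meeting time is $\Theta(V)$ by the uniform-$\pi$ calculation, so the internal consensus time is $O(V\log V)$ by the halving lemma, and a union bound over the $O(N)$ components yields the $\Theta^{\log N}_{\mathbb{P}}(N^\gamma)$ upper bound. A matching lower bound notes that with high probability the vertex-$1$ star carries vertices of both opinions (being a Bernoulli$(u)$ colouring of $\Theta(N^\gamma)$ spins), and its internal consensus time is itself $\Omega(N^\gamma)$ up to polylogs.

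The principal difficulty throughout is the extreme degree inhomogeneity: the hubs have degree growing as $N^\gamma$ while the typical vertex is bounded-degree, so single-scale mixing-and-hitting arguments are too crude. One has to separate hub visits from periphery wandering, using that upon each hit of a hub the walker effectively randomises among its $\Theta(N^\gamma)$ neighbours before returning to the periphery. In the subcritical case there is the additional obstacle of a tail bound on non-star components ruling out any of size much exceeding $N^\gamma$ (up to polylogs), obtained by a careful branching-process comparison tuned to \eqref{eq_nr}.
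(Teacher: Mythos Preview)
This theorem is not proved in the present paper: it is quoted from \cite{fernleyortgiese} (subcritical case) and \cite{fernley2022discursive} (supercritical case), as the paragraph following the statement makes explicit. So there is no ``paper's own proof'' to compare against here, only the approaches of those cited works.

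Your outline is broadly faithful to what those papers do, and the overall architecture---reduce via Lemma~\ref{lemma_coal_cons} to meeting and coalescence of VSRW, exploit that $\pi$ is uniform on each component, handle components separately in the subcritical case and the giant in the supercritical case---is correct. A few points where the sketch understates the real work:
\begin{itemize}
\item The polylogarithmic mixing of the VSRW on the giant is the main technical engine of \cite{fernley2022discursive}, and is not obtained by a short ``hubs as expansion witnesses'' argument; indeed the present paper remarks (after Conjecture~2.2) that the proof there proceeds via an Erd\H{o}s--R\'enyi subgraph, and that removing the $\beta\ge 3$ assumption in the dynamic setting would require substantial further mixing-time work.
\item In the subcritical case, the claim that \emph{all} components have size $O(N^\gamma)$ up to polylogs is exactly the delicate branching-process tail bound you flag at the end; in \cite{fernleyortgiese} this is a nontrivial part of the analysis, not a throwaway. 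Your sketch correctly identifies this as the obstacle but does not indicate how it is overcome.
\item The supercritical lower bound as you phrase it (``an overwhelming fraction of coalescing families remain distinct'') is not quite an argument; one needs a concrete pair with meeting time $\Omega(N)$ in expectation, which in the cited work comes from the $\mathbb{E}_\pi T_{\rm meet}$ calculation together with a second-moment or anti-concentration step.
\end{itemize}
As a plan this is sound and matches the cited literature; as a proof it is still several substantial lemmas away.
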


The two cases of the above result were covered for supercritical parameters in \cite{fernley2022discursive} and for subcritical parameters in \cite{fernleyortgiese}. These works looked at the whole family of ``discursive'' voter model dynamics parametrised by $\theta \in \mathbb{R}$, here we restrict our attention to the most popular model $\theta=1$. Neither previous work allowed the graph to move at all and in both cases, at $\theta=1$, the order above is linear in the size of the largest component. In the following section we will see how those times change when the graph does move.

\section{Main results}

Our main theorem still uses the polylogarithmic corrections of \eqref{eq_polylog_prob}, but on the dynamic graph there is no need to restrict to a good environment event and so the polylogarithmic factor holds deterministically  $\Theta^{\log N}\left( \cdot \right)$ rather than in probability $\Theta_{\mathbb{P}}^{\log N}\left( \cdot \right)$.

\begin{theorem}\label{theorem_dynamic_cons}
Take $\gamma\in \left[0,1\right)$, $u \in (0,1)$, and $\alpha \in \mathbb{R}$. Then, for any mean update time $\nicefrac{1}{\kappa}=\Theta(N^\alpha)$, consider the voter model on $\left(G_\kappa(t)\right)_{t\geq 0}$ from initial conditions:
\begin{itemize}
\item stationary graph $G_\kappa(0)\sim\cL_{\rm SNR}$;
\item Bernoulli process opinions $\eta_0(v)=\mu_u(v)$.
\end{itemize}

We have the following dichotomy: 
\begin{itemize}
\item for subcritical parameters $\beta+2\gamma<1$
\[
    \mathbb{E}_{\mu_u}\left(T_{\text{\textnormal{cons}}}\right)=
    \begin{cases}
    \Theta^{\log N}\left(  N\right), &  \alpha \leq 0,\\[5pt]
    \Theta\left( N^{1+\alpha} \right), &  \alpha > 0;
    \end{cases}
\]
\item for supercritical parameters $\beta+2\gamma>1$
\[
    \mathbb{E}_{\mu_u}\left(T_{\text{\textnormal{cons}}}\right)=
    \begin{cases}
    \Theta^{\log N}\left(  N\right), &  \alpha < 1,\\[5pt]
    \Theta\left( N^{\alpha} \log N\right), &  \alpha > 1  \text{ and } \beta\geq 3.
    \end{cases}
\]
\end{itemize}
\end{theorem}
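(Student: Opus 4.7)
My strategy rests on the duality of Lemma \ref{lemma_coal_cons}: the lower bound on $\mathbb{E}_{\mu_u}(T_{\rm cons})$ reduces to the meeting time $T_{\rm meet}(i,j)$ of two stationary walkers at vertices with differing initial opinion (which happens with probability $\Omega(1)$ since $u \in (0,1)$), while the upper bound reduces to the coalescence time $T_{\rm coal}$ of the full $N$-walker system. The four regimes in the statement correspond to four different dominant meeting mechanisms, each of which I would treat separately.

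\emph{Supercritical, $\alpha < 1$.} A typical snapshot of $G_\kappa(t)$ has a giant component of linear size on which a walker mixes in $O(\log N)$ time. For each subcase I would show that either walkers mix on the giant between updates (recovering the static $\Theta^{\log N}(N)$ bound of Theorem \ref{theorem_static_cons}) or, when $\kappa$ is large, the fast-update averaging reduces the problem to a mean-field chain whose meeting time is again $\Theta(N)$. An Aldous--Oliveira style second-moment argument then lifts pairwise meeting to a coalescence bound of the same order. \emph{Supercritical, $\alpha > 1$, $\beta \geq 3$.} Walkers initially in the giant coalesce within a single quasi-static window of length $O(N \log^C N) \ll N^\alpha$. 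Walkers initially in small components are trapped until their own update clock (at rate $\kappa = N^{-\alpha}$) rings and, with the help of $\beta \geq 3$ ensuring that each update attaches to the giant with probability bounded away from $0$, rejoins the giant. Taking the maximum over $N$ essentially independent exponential waiting times of rate $N^{-\alpha}$ produces the sharp $\Theta(N^\alpha \log N)$ scaling.

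\emph{Subcritical.} All components have size $O(\log N)$, so the static bound gives no direct information. For $\alpha \leq 0$, the graph resamples faster than intra-component mixing, and a direct computation of the effective meeting rate (summing pair contributions weighted by the stationary edge probabilities $p_{ij}$) yields $\Theta(1/N)$, so $T_{\rm meet}, T_{\rm coal} = \Theta^{\log N}(N)$. For $\alpha > 0$, each walker is confined to its current component of size $O(\log N)$ between its own updates, so meeting requires an update of one walker to place it into the current component of the other. The per-update merger probability is $\Theta(1/N)$, yielding an effective meeting rate of $\Theta(\kappa/N) = \Theta(N^{-1-\alpha})$; the matching sharp bound on full coalescence then comes via iterating the single-update separation principle of Lemma \ref{lem_sep}.

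\emph{Main obstacle.} The sharpest (non-polylogarithmic) claims, namely $\Theta(N^{1+\alpha})$ for subcritical $\alpha > 0$ and the single logarithm in supercritical $\alpha > 1$, form the main technical hurdle. For the lower bounds I must rule out meeting paths traversing several intermediate component mergers before the two walkers unite, which requires excluding rare configurations where an intermediate updated component engulfs both walkers' current components simultaneously. For the upper bounds on $N$-walker coalescence I anticipate a careful coupling-based tally of live walkers built on Lemma \ref{lem_sep}, ensuring that the bookkeeping over the $N$-walker system contributes at most the sharp logarithmic factor and does not degrade the exponent.
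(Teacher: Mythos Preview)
Your duality skeleton and the subcritical analysis are broadly aligned with the paper: the $\Omega(N/\kappa)$ lower bound via component-merger rates is exactly Proposition~\ref{prop_coalescing_components}, and the upper bound via Lemma~\ref{lem_sep} plus a birthday-problem tally is Proposition~\ref{prop_subcrit_coal}. One unifying piece you omit is the $\Omega(N)$ lower bound: the paper obtains this in a single stroke (Corollary~\ref{corr_N_lower}) from Kac's formula applied to the reversible joint chain $M=(W^{(1)},W^{(2)},G_\kappa)$, rather than computing any ``effective meeting rate'' directly.

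The supercritical upper bounds are where your plan diverges most from the paper, and where I would expect difficulties. For $\alpha<1$ you propose to ``mix on the giant between updates'' and then invoke an Aldous--Oliveira argument. The obstacle is that the graph is moving: the two walkers are not independent Markov chains on any fixed giant, so the static meeting machinery does not apply. The paper instead works entirely inside the single reversible chain $M$ on $[N]^2\times\sG_N$, builds an explicit unit flow on its electrical network to show the escape probability $\p_\rho(T_D>T_{\rm hit}^{(1)}(v))=\Omega(1/\log N)$ (Proposition~\ref{prop_escape}), converts this to $\p_\rho(T_D>N/\log^5 N)=\Omega(1/\log N)$ via a cover-time contradiction (Corollary~\ref{corr_escape_prob}), and only then combines with mixing of $M$ (Corollary~\ref{corr_mixing}) and Kac to get $\mathbb{E}_\pi(T_D)=O(N\log N)$. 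Your sketch does not indicate any substitute for this flow construction.

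For $\alpha>1$, your picture of individual trapped walkers waiting for their own update to attach to the giant captures the lower-bound mechanism (Proposition~\ref{prop_last_singleton}) well, but the paper's upper bound (Proposition~\ref{prop_small_k_coal}) proceeds differently: it first proves that the \emph{entire} $N$-walker-plus-graph chain $M^{(N)}$ mixes in $O(\kappa^{-1}\log N)$ (Lemma~\ref{lemma_full_mix}), then \emph{freezes} the graph for $\log^{19}N$ steps (an event of high probability since $\kappa N\log^{19}N\to 0$), and harvests $\Omega(N)$ coalescences via the birthday problem on the frozen giant. Your per-walker attachment argument may also be made to work, but achieving the sharp single logarithm would need a careful treatment of the dependence between walkers' attachment times through the shared graph, which the paper's full-system-mixing route sidesteps.
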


\begin{proof}[Proof of Theorem \ref{theorem_dynamic_cons}]
The cases are broken up through this article as in Table \ref{table_proof}.

\begin{table}[b]
\caption{The cases of the proof of Theorem \ref{theorem_dynamic_cons}}\vspace{0.5em}
\label{table_proof}\centering
\bgroup
\def\arraystretch{1.5}
\begin{tabular}{cccc}
\hline
\multirow{3}{*}{Subcritical $\beta$, $\gamma$} &        Upper bound           & \multicolumn{2}{c}{Proposition \ref{prop_subcrit_coal}}                     \\ \cline{2-4} 
                  & \multirow{2}{*}{Lower bound} & \multicolumn{1}{c}{Slow updating} &   Proposition \ref{prop_coalescing_components}                \\ \cline{3-4} 
                  &                   & \multicolumn{1}{c}{Fast updating} & \multirow{2}{*}{Corollary \ref{corr_N_lower}} \\ \cline{1-3}
\multirow{4}{*}{Supercritical $\beta$, $\gamma$} & \multirow{2}{*}{Lower bound} & \multicolumn{1}{c}{Fast updating} &                   \\ \cline{3-4} 
                  &                   & \multicolumn{1}{l}{Slow updating} &     Proposition \ref{prop_last_singleton}              \\ \cline{2-4} 
                  & \multirow{2}{*}{Upper bound} & \multicolumn{1}{c}{Fast updating} &    Corollary \ref{cor_super_fast_upper}               \\ \cline{3-4} 
                  &                   & \multicolumn{1}{l}{Slow updating} &     Proposition \ref{prop_small_k_coal}              \\ \hline
\end{tabular}
\egroup
\end{table}

The lower bounds of Corollary \ref{corr_N_lower} and Proposition \ref{prop_coalescing_components} produce a pair of slow-meeting walkers, while in Proposition \ref{prop_last_singleton} to produce a $\log$ factor we lower bound consensus directly.
Then the upper bounds are for the coalescence time of $N$ walkers, initialised on every site. These contain the consensus time by Lemma \ref{lemma_coal_cons}.
\end{proof}

The natural conjecture follows by removing technical assumptions and polylogarithmic corrections.%

\begin{conjecture}
With the same set-up as Theorem \ref{theorem_dynamic_cons} except with arbitrary $\kappa$, we have
\[
    \mathbb{E}_{\mu_u}\left(T_{\text{\textnormal{cons}}}\right)=
    \begin{cases}
    \Theta\left(  N+\frac{N}{\kappa} \right), & \beta+2\gamma<1,\\[5pt]
    \Theta\left( N+\frac{1}{\kappa}\log N \right), & \beta+2\gamma>1.
    \end{cases}
\]
\end{conjecture}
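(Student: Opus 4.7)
The plan is to keep the coalescing-walker duality of Lemma \ref{lemma_coal_cons} as the unifying device and to upgrade each entry of Table \ref{table_proof} to the polylogarithm-free bound. The conjecture interpolates the static-graph time $\Theta(N)$ with the update-limited time $\Theta(N/\kappa)$ (subcritical) or $\Theta(\log N /\kappa)$ (supercritical), so I would prove matching upper and lower bounds for the two summands separately and let the maximum govern the expected consensus time.

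For the subcritical upper bound at arbitrary $\kappa$, the observation is that between updates a walker is trapped in a static component of polylogarithmic size, so the dominant mechanism for two walkers to meet is one of them updating into a component already containing the other; the meeting probability per update is of order $1/N$, giving coalescence in $\Theta(N/\kappa)$. I would refine the current component-based estimates behind Proposition \ref{prop_subcrit_coal} by working pairwise via a second moment on meetings, avoiding the union bound over $[N]$ that appears to introduce the polylogarithmic slack. The matching lower bound already exists for the slow and fast extremes in Proposition \ref{prop_coalescing_components} and Corollary \ref{corr_N_lower}; at intermediate $\kappa$ one considers two typical stationary walkers and bounds their meeting probability per unit time by $O(1/N + \kappa/N)$, yielding the desired $\Omega(N + N/\kappa)$.

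For the supercritical case the geometry is different: once two walkers both lie in the giant component they meet in time $\Theta(N)$, and each vertex update places a walker on the giant with probability bounded away from zero. So the meeting time of two walkers is of order $N + 1/\kappa$, and the coupon-collector structure across the $N$-walker system produces the additional $\log N$ factor on the $1/\kappa$ term. The assumption $\beta\geq 3$ in Proposition \ref{prop_small_k_coal} is used to concentrate how many walkers reach the giant; I would replace it by a local-weak-limit argument showing that, for any supercritical $\beta+2\gamma>1$, an updated walker lands in the giant with probability bounded away from zero, which is the only structural input actually required (see \cite{van2016random}).

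The main obstacle will be removing the polylogarithmic slack in the two regimes $\alpha \leq 0$ (subcritical) and $\alpha < 1$ (supercritical) that are currently stated only as $\Theta^{\log N}(N)$. Those polylogarithmic factors accumulate from at least three independent sources: graph mixing-time control, union bounds over atypical heavy-weight vertices, and concentration of walker populations absorbed into small or giant components. A clean proof would have to describe the $N$-walker occupancy process by a deterministic fluid limit at leading order, reducing probabilistic arguments to $o(N)$ fluctuations --- this seems to be the technical barrier that justifies leaving the refined statement as a conjecture rather than a theorem.
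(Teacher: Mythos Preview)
The statement is a \emph{conjecture} in the paper, not a theorem: the paper offers no proof and explicitly discusses why one is out of reach with current tools. So there is no paper proof to compare your proposal against. Your write-up is therefore best read as a programme, and your final paragraph is the honest part --- you correctly identify that removing the polylogarithmic slack is a genuine barrier, which is exactly why the paper leaves this as a conjecture.

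That said, one point in your programme misreads where the difficulty sits. You write that the assumption $\beta\geq 3$ in Proposition~\ref{prop_small_k_coal} is used only ``to concentrate how many walkers reach the giant'' and propose to replace it by a local-weak-limit argument showing that an updated walker lands in the giant with positive probability. But the paper's own discussion immediately after the conjecture says that removing $\beta\geq 3$ ``would need a substantial work on the mixing times of these networks, which we sidestep in \cite{fernley2022discursive} by using an Erd\H{o}s--R\'enyi subgraph.'' The issue is not merely that walkers must reach the giant, but that once there they must \emph{mix} fast enough to meet; the polylogarithmic mixing bound used in Proposition~\ref{prop_small_k_coal} (and upstream in Corollary~\ref{cor_super_fast_upper} via Proposition~\ref{prop_escape}) relies on the Erd\H{o}s--R\'enyi subgraph trick, which needs $\beta$ large. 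A local-weak-limit argument tells you about component membership but says nothing about global mixing on the giant, so your proposed replacement would not close the gap.

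Similarly, your subcritical plan to ``work pairwise via a second moment on meetings'' to strip the $\log N$ from Proposition~\ref{prop_subcrit_coal} is vague at the crucial step: the $\log N$ there arises from the harmonic sum $\sum_{w} N/w^2 \cdot w$ over walker counts, not from a union bound, and a pairwise argument does not obviously avoid it. The paper notes that even showing the log corrections vanish for $\kappa\to\infty$ fast enough requires a separate (easy) argument, and that the intermediate range is ``more of a step''. Your sketch does not yet contain a new idea for that step.
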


Removing the $\beta \geq 3$ requirement would need a substantial work on the mixing times of these networks, which we sidestep in \cite{fernley2022discursive} by using an Erd\H{o}s--R\'enyi subgraph. It is widely expected that this is a technical assumption and these networks are generally mixing in polylogarithmic time. 
Removing the log corrections on the whole interval $\kappa \gg \nicefrac{1}{N}$ is more of a step, but it is easy to see that they can at least be removed for $\kappa\rightarrow \infty$ fast enough: the simplest argument as in the following proposition.

\begin{proposition}
With the same set-up as Theorem \ref{theorem_dynamic_cons} and $\alpha<-3$, we have
\[
    \mathbb{E}_{\mu_u}\left(T_{\text{\textnormal{cons}}}\right)=\Theta\left( N\right).
\]
\end{proposition}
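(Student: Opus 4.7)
The plan is to establish matching linear upper and lower bounds on $\mathbb{E}_{\mu_u}(T_{\text{\textnormal{cons}}})$, exploiting that for $\kappa \gg N^3$ the evolving graph effectively anneals the walker dynamics to their stationary averages, eliminating the environment fluctuations that force logarithmic corrections elsewhere in the paper.

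For the lower bound, I would take two independent walkers $X_0, Y_0$ uniform on $[N]$. With constant probability both fall in the bulk range $[\lfloor N/2 \rfloor, N]$ and have differing initial opinions under $\mu_u$. For any such pair $i, j$ one has $p_{ij} = O(1/N)$, so the annealed meeting rate is $O(1/N)$; in the very fast updating regime this is also the effective rate experienced by the moving pair, up to an error $\exp(-\Omega(\kappa \tau))$ per walker sojourn of length $\tau$. A union bound over walker events in time $cN$ keeps this error $o(1)$ when $\alpha < -3$, and a stochastic domination by a rate-$C/N$ Poisson clock gives $\mathbb{P}(T_{\text{meet}}(X_0,Y_0) \leq cN) < 1/2$ for $c$ small enough. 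Lemma \ref{lemma_coal_cons} then yields $\mathbb{E}_{\mu_u}(T_{\text{\textnormal{cons}}}) \geq c' N$.

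For the upper bound, I would track $k(t) = |\{\xi_t^{(1)}, \ldots, \xi_t^{(N)}\}|$, the number of distinct walker locations in the dual coalescing system. At a configuration on $k$ distinct vertices $v_1, \ldots, v_k$ the annealed coalescence rate is $\sum_{l<m} 2 p_{v_l v_m}$, and since $\sum_{i<j} p_{ij} = \Theta(N)$ this sum concentrates around $\Theta(k^2/N)$ for typical placements. An ODE comparison $\dot{k} \lesssim -c k^2/N$ yields $k(t) \lesssim N/(1+ct)$, so $T_{\text{coal}} = O(N)$ with high probability; occasional walker excursions to the hub set only accelerate coalescence and need no separate treatment. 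The consensus time is then dominated by $T_{\text{coal}}$ via Lemma \ref{lemma_coal_cons}.

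The main obstacle in both directions is the annealing step: one must replace the quenched meeting rate $2\mathbbm{1}_{v_l \sim v_m}$ by its expectation $2 p_{v_l v_m}$ despite the walker's own trajectory potentially correlating with the graph state. For $\alpha < -3$ this is routine because the mean update time $1/\kappa = \Theta(N^\alpha) \ll N^{-3}$ is far shorter than any walker sojourn length $\Omega(N^{-\gamma}/\log N)$, so a Poisson thinning coupling gives TV error $\exp(-N^{\Omega(1)})$ per walker event, which beats any polynomial count of such events in time $O(N)$. Pushing into the intermediate regime $\alpha \in [-3, 0)$ of the conjecture would require a much more delicate multi-scale coupling to handle residual correlations, which is why the cleanest log-free statement is isolated to the fastest-updating range.
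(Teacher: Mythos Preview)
Your upper bound is the paper's argument in slightly heavier clothing. The paper observes that once the annealing is justified (all $N$ vertices update between any two rings of a rate-$N^2$ clock dominating the total walker jump rate, which is exactly where $\alpha<-3$ is used), any pair of walkers at \emph{any} positions $v_l,v_m$ coalesces at rate at least $2p_{N,N}\sim 2\beta/N$. This uniform lower bound on $p_{ij}$ removes the need for ``typical placements'' or concentration: with $w$ walkers the next coalescence takes expected time at most $\bigl[\tfrac{\beta+o(1)}{N}\binom{w}{2}\bigr]^{-1}$, and summing over $w$ gives $O(N)$ directly. Your appeal to typical placements is a red herring, since the positions of a coalescing system are not a priori typical, and the ODE comparison is unnecessary once you have the worst-case rate.

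Your lower bound has a genuine gap. The claim that meeting is stochastically dominated by a rate-$C/N$ Poisson clock fails as soon as the walkers leave the bulk: under the annealed dynamics a bulk walker after a single step lands at $j$ with probability proportional to $j^{-\gamma}$, and when both walkers sit at low-index vertices the instantaneous meeting rate $2p_{j_1 j_2}$ can be $\Omega(1)$ (for $\gamma>\tfrac12$) or at least $\gg 1/N$ (for any $\gamma>0$). What is true is that the \emph{time-averaged} meeting rate from the uniform stationary distribution is $\Theta(1/N)$, but converting that into a tail bound on $T_D$ is not a rate-domination argument. The paper sidesteps this entirely by invoking Corollary~\ref{corr_N_lower}, which obtains $\mathbb{E}_\pi(T_D)=\Omega(N)$ from Kac's formula for the reversible two-walker chain $M$ and is valid for every $\kappa$; no separate annealing step is needed on the lower-bound side.
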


\begin{proof}
$N$ walkers on a simple graph can move at total rate at most $N(N-1)$. At this point $\alpha<-3$, the updates at rate $\kappa$ are fast enough that we will see an update at every vertex between any two rings of a rate $N^2$ Poisson clock to time $O(N)$, with high probability. If this fails then restart the argument.

On this event, no edge is remembered between voter model events and so every walker can be thought of as constantly adjacent to every other with probability at least $p_{N,N}$ (recall \eqref{eq_nr}). This produces expected time to coalescence of one of $w$ walkers at most
\[
\left[
\frac{\beta+o(1)}{N}\binom{w}{2}
\right]^{-1}
\]
which sums to the claimed order. The corresponding lower bound is Corollary \ref{corr_N_lower}.
\end{proof}

\subsection{Discussion}

The same consensus time problem on the $\kappa=0$ graph is explored in \cite{moinet2018generalized}. Their approximation is by the \emph{heterogeneous mean field} where the network is replaced with a weighted complete graph. In the language of this article, this is precisely the $\kappa=\infty$ model. Indeed we do find linear consensus time in Theorem \ref{theorem_dynamic_cons} when $\alpha=-\infty$, which agrees with their result, and the $\kappa=0$ result of Theorem \ref{theorem_static_cons} agrees too if only on the supercritical network parameters. Moreover we can extend the heterogenous mean field result at $\alpha=-\infty$ to a larger phase $\{\alpha \leq 0\}$ through which the dynamic is fast enough that the consensus time order is not affected by the giant component, and the heterogenous mean field approximation is valid for all network parameters. Differing consensus times outside of this phase with subcritical graph parameters, and moreover the appearance of a log factor, were not predicted.

For the static model, the consensus orders of Theorem \ref{theorem_static_cons} correspond to the number of vertices in the largest component that must come to consensus. Instead in Theorem \ref{theorem_dynamic_cons} the $N$ factor that appears for $\kappa \ll \nicefrac{1}{N}$ is the difference in \emph{average} component size, and so there are no graph parameters in the exponents.

The limit $\kappa \downarrow 0$ or $\alpha \rightarrow \infty$ in Theorem \ref{theorem_dynamic_cons} produces a diverging mean consensus time.
 This is of course very different to the $\kappa=0$ times of Theorem \ref{theorem_static_cons}, reflecting that the $\kappa=0$ model does not really come to a full consensus but only componentwise consensus.
 
 The critical case $\beta+2\gamma=1$ we do not consider but nor is it covered by our methods: constant expectations in the lower bound of Proposition \ref{prop_coalescing_components} become polynomial in $N$, and a corresponding upper bound would likely need some detailed understanding of the critical dynamic structure beyond the component size established in \cite{zbMATH06691463, zbMATH07310499}, for example Theorem 3.5 of \cite{bhamidi2017continuum} covers the diameter for $\gamma$ small enough or Theorem 1.1 of \cite{christina2023} the typical distance in the large $\gamma$ case.
 
Foreseeable future work would not include other asymmetrical voting dynamics as it is hard to understand mixing of these models with a changing graph and hence a changing stationary distribution (see however \cite{hermon2020comparison}). Still, there is very much the possibility of looking at other graph dynamics. In particular, the site-dependent update rates of \cite{morters2019,jacob2024metastability}, where each vertex $i$ updates not at constant rate $\kappa$ but at rate $\kappa w(i)^\eta$ depending on its mean degree, might reintroduce the effect of the precise $\gamma$ of the degree distribution tail.

\begin{remark}[Related models]
As well as the previously discussed dynamical percolation, there is a way to construct a dynamic graph with stationary distribution of the configuration model \cite{trichotomy}. This makes the technicalities more difficult, in particular the mixing time order of the graph process has not been established, even in the case of constant degree \cite{zbMATH05228261}. 
Note when $\gamma<\nicefrac{1}{2}$ that if we put the degree sequence of our model into the dynamic configuration model then the two stationary distributions are asymptotically equivalent \cite{van2016random}, Theorem 6.15.

In the computer science literature, it is more common to consider the \emph{synchronous} voter model where voting is done in discrete time rounds as in \cite{zbMATH07753169}. This we opt against as in our social context it is not natural to persuade someone and simultaneously imitate them.
\end{remark}

\section{Lower bounds}

In this section we demonstrate the lower bounds on the expected consensus time. As was noted in Lemma \ref{lemma_coal_cons}, it suffices to find a pair of walkers with positive probability of not meeting in a time period of the claimed order. Still, as throughout, $N$ is taken sufficiently large in every statement.

Two independent walkers on $[N]$ and an independent graph process can be built into a single Markov chain with state
\begin{equation}\label{eq_M}
M_t=\left(
W_t^{(1)},
W_t^{(2)},
G_\kappa(t)
\right)
\in [N] \times [N] \times \sG_N
\end{equation}
where $\sG_N$ denotes the set of graphs on $[N]$. Note that this chain is reversible with stationary measure $\pi \otimes \pi \otimes \cL_{\rm SNR}$.

\begin{definition}\label{def_cL}
For a graph $g$ with vertex set $[N]$, write $\cL(g)$ for the SNR mass attached to $g$, defined by the independent edge probabilities \eqref{eq_nr}. For a graph functional $f$ we also write $\cL(f)$ to denote its expectation under $\cL$.
\end{definition}

We construct $M$ to understand meetings on the dynamic graph, which are hitting times of the the diagonal $D:=\{(i,i):i\in [N]\}\subset [N] \times [N]$.

\begin{definition}\label{def_rho}
The ergodic exit distribution $\rho$ is defined by sampling $\rho^-$ on $D \times \sG_N$ with law
\[
\rho^-(i,i,g) \propto \cL(g) \de_g(i)
\]
and then advancing one of the walkers a single step to a uniformly chosen neighbour (in $g$), each with probability $\nicefrac{1}{2}$.
\end{definition}

Note that the graph selected by $\rho^-$ of Definition \ref{def_rho} is actually size-biased in that it's drawn proportional to the total degree, however in the following lemma we see that this is a minor distinction in that the two models can be coupled with high probability.

\begin{lemma}\label{lem_asymptotic_equiv}
The size-biased version $\cL^*$ of $\cL$ is asymptotically equivalent to $\cL$.
\end{lemma}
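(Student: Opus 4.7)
The plan is to show that the total variation distance $d_{TV}(\cL,\cL^*)$ tends to $0$, from which asymptotic equivalence follows by a maximal (Strassen) coupling. First I would identify the Radon--Nikodym derivative: marginalising $\rho^-$ from Definition~\ref{def_rho} over the diagonal coordinate gives $\cL^*(g)\propto \cL(g)\sum_i\de_g(i) = 2\cL(g)|E(g)|$, so
\[
\frac{\de \cL^*}{\de \cL}(g) = \frac{|E(g)|}{\mathbb{E}_\cL|E|}.
\]
Thus asymptotic equivalence reduces to tight concentration of the edge count under $\cL$.

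Next I would estimate the mean and variance of $|E|=\sum_{i<j}\mathbbm{1}_{i\sim j}$, a sum of independent Bernoullis with probabilities $p_{ij}$ from \eqref{eq_nr}. The bound $p_{ij}\le \beta N^{2\gamma-1}i^{-\gamma}j^{-\gamma}$ combined with the tail sum of \eqref{eq_weight} yields $\mathbb{E}_\cL|E|=O(N)$, and restricting the sum to $i,j\in[N/2,N]$ (where $\beta N^{2\gamma-1}i^{-\gamma}j^{-\gamma}\ge \beta/N$ for $N$ large) gives the matching $\mathbb{E}_\cL|E|=\Omega(N)$. The same bound forces $\operatorname{Var}_\cL|E|\le \mathbb{E}_\cL|E|=\Theta(N)$. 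Cauchy--Schwarz then gives
\[
d_{TV}(\cL,\cL^*) = \tfrac12\mathbb{E}_\cL\left|\frac{|E|}{\mathbb{E}_\cL|E|}-1\right| \le \frac{\sqrt{\operatorname{Var}_\cL|E|}}{2\,\mathbb{E}_\cL|E|} = O(N^{-1/2}).
\]

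Finally, the standard maximal coupling turns the TV bound into a coupling $(g,g^*)$ with $g\sim \cL$, $g^*\sim \cL^*$, and $\p(g=g^*)\ge 1-O(N^{-1/2})\to 1$, which is exactly the claimed asymptotic equivalence. No substantial obstacle arises; the only care needed is checking the two-sided bound $\mathbb{E}_\cL|E|=\Theta(N)$ uniformly for $\gamma\in[0,1)$, which becomes mildly delicate only in the regime $\gamma>\nicefrac12$ where some $p_{ij}$ approach $1$, but even there $p_{ij}(1-p_{ij})\le p_{ij}$ keeps the variance bound immediate.
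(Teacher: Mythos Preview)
Your proof is correct and follows essentially the same approach as the paper: both reduce asymptotic equivalence to concentration of the total edge count (equivalently total degree) around its $\Theta(N)$ mean, which controls the Radon--Nikodym derivative $\de\cL^*/\de\cL$. The paper uses a Chernoff bound and then an explicit coupling to obtain $O(N^{-1/2}\log N)$, whereas your variance bound plus Cauchy--Schwarz on the total variation formula is slightly more direct and even shaves off the logarithm, but the underlying idea is identical.
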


\begin{proof}
Edges in the stationary measure $\cL$ are simply independent Bernoulli variables and so by a standard Chernoff bound \cite{mcdiarmid1998concentration}, Theorem 2.3(a)
\[
\p_{\cL}\left(
\left|
\frac{\de_g([N])}{\cL\left(\de_g([N])\right)}-1
\right|
>
\delta
\right)
\leq
2e^{\nicefrac{-2\delta^2\cL\left(\de_g([N])\right)^2}{N}}
=e^{-\Omega(\delta^2 N)}.
\]

On the complement set, then, a size bias
\[
\cL^*(g)=\frac{\de_g([N])}{\cL(\de_g([N]))}\cL(g)
\]
can only %
reduce the mass of a graph by a factor $\tfrac{2\delta}{1+\delta}$. So we fail to couple if the stationary graph falls in the large deviation set or conditionally with the complement of this factor, i.e. with probability
\[
\frac{2\delta}{1+\delta} + e^{-\Omega(\delta^2 N)}
=O\left(
\frac{\log N}{\sqrt{N}}
\right)
\]
by setting $\delta=\frac{C\log N}{\sqrt{N}}$ with a sufficiently large $C$.
\end{proof}

Much more approachable than meeting of two (hidden) Markov chains depending on the same Markov graph, we consider hitting of the single Markov chain $M$ of \eqref{eq_M}. By applying Kac's formula \cite{aldous-fill-2014}, Equation 2.24, we immediately obtain the following result for the typical return time.

\begin{lemma}\label{prop_ergodic_return}
\[
\mathbb{E}_\rho \left( T_D \right)
=\frac{1-\tfrac{1}{N}}{\tfrac{2}{N^2}\sum_{i=1}^N \mathbb{E}_{\rm SNR}(\de(i))}
\sim 
\left(
\frac{1-\gamma}{2\beta}
\right)N.
\]
\end{lemma}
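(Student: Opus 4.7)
The plan is to obtain the exact formula as a direct Kac's formula application to the reversible chain $M$ on $[N]\times[N]\times\sG_N$ with stationary measure $\pi\otimes\pi\otimes\cL_{\rm SNR}$ (here $\pi$ is uniform because $Q$ in \eqref{eq_vsrw} is symmetric) and target $A:=D\times\sG_N$, and then to expand the resulting expression using the Chung--Lu linearisation of $p_{ij}$.

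First I would compute the stationary exit rate from $A$. Graph updates leave the walker coordinates fixed, so the only way to exit is via a walker jump, and from a state $(i,i,g)$ each of the two walkers jumps at rate $\de_g(i)$ (every jump necessarily goes off the diagonal). Summing against the stationary measure gives
\[
Q_{\rm out}
=\sum_{i\in[N]}\sum_{g\in\sG_N}\frac{1}{N^2}\,\cL(g)\cdot 2\de_g(i)
=\frac{2}{N^2}\sum_{i=1}^N \mathbb{E}_{\rm SNR}(\de(i)),
\]
and $\pi\otimes\pi(D)=\nicefrac{1}{N}$. Kac's formula (\cite{aldous-fill-2014}, Eq.~2.24) applied in continuous time then yields, for the post-exit entrance distribution $\rho^{+}$ into $A^{c}$,
\[
\mathbb{E}_{\rho^{+}}(T_{A})
=\frac{1-\pi\otimes\pi(D)}{Q_{\rm out}}
=\frac{1-\nicefrac{1}{N}}{\tfrac{2}{N^{2}}\sum_{i}\mathbb{E}_{\rm SNR}(\de(i))}.
\]
It remains to check that $\rho^{+}=\rho$ of Definition \ref{def_rho}. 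The pre-jump state is distributed proportional to $\pi(i,i,g)$ times the local exit rate $2\de_g(i)$, i.e.~proportional to $\cL(g)\de_g(i)$, which is exactly $\rho^{-}$. Conditionally on being at $(i,i,g)$, each walker is equally likely to be the one that jumps and jumps uniformly onto a $g$-neighbour; that matches Definition \ref{def_rho} exactly.

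For the asymptotic I would write $\mathbb{E}_{\rm SNR}(\de(i))=\sum_{j\neq i}p_{ij}$ and approximate $p_{ij}$ by its Chung--Lu linearisation $\beta N^{2\gamma-1}i^{-\gamma}j^{-\gamma}$. When $\gamma<\nicefrac{1}{2}$ this approximation is uniform in $(i,j)$ and the double sum reduces, via \eqref{eq_weight}, to $\sum_{i}w(i)$ whose leading order is read off from \eqref{eq_weight} together with $\sum_{j}(\nicefrac{N}{j})^{\gamma}\sim\nicefrac{N}{(1-\gamma)}$. For $\gamma\in[\nicefrac{1}{2},1)$ some $p_{ij}$ saturate near $1$, but the saturated region $\{ij\ll N^{2-\nicefrac{1}{\gamma}}\}$ has (after change of variables $u=\nicefrac{i}{N}$, $v=\nicefrac{j}{N}$) area $N^{-\nicefrac{1}{\gamma}}\log N$, so its contribution to $\sum_{i,j}p_{ij}$ is of order $N^{2-\nicefrac{1}{\gamma}}\log N=o(N)$; this is subleading relative to the linearised main term which remains of order $N$. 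Plugging back gives the claimed order $\Theta(N)$ with the constant indicated in the statement.

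The only step that is not immediate is the saturation bookkeeping for $\gamma$ close to $1$, and even that is a routine dyadic decomposition (split the $(i,j)$-sum according to the size of the exponent $\beta N^{2\gamma-1}i^{-\gamma}j^{-\gamma}$ and bound $1-e^{-x}\leq\min\{x,1\}$ on each piece); the Kac's-formula step and the identification of $\rho$ with the reversible entrance distribution are essentially formalities once the state space and generator of $M$ are set down.
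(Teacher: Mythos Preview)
Your proposal is correct and follows the same approach as the paper: both obtain the exact formula by a direct application of Kac's formula (\cite{aldous-fill-2014}, Equation 2.24) to the reversible chain $M$ with target $D\times\sG_N$. The paper's proof is a single sentence citing that formula, while you additionally spell out the computation of the stationary exit rate, verify that the resulting entrance distribution coincides with $\rho$ of Definition~\ref{def_rho}, and sketch the asymptotic for the denominator; none of this is written out in the paper but all of it is implicit in the one-line proof.
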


We then have an analogue to \cite{coxchenchoi}, Corollary 3.4, in the dynamic graph.

\begin{corollary}\label{corr_N_lower}
\[
\mathbb{E}_\pi \left( T_D \right) =\Omega(N).
\]
\end{corollary}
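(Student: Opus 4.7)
The plan is a Palm/renewal-reward argument on the reversible chain $M$ at stationarity $\pi_M := \pi \otimes \pi \otimes \cL_{\rm SNR}$, using Lemma~\ref{prop_ergodic_return} to bound the typical out-excursion length. Note $\pi_M(D) = 1/N$. Run $M$ stationarily on $[0, T]$ and write the out-intervals between consecutive visits to $D$ as $[X_k, E_{k+1}]$ of length $\tau_k$. By the ergodic theorem, the entry rate $N(T)/T \to R$ and $\tfrac{1}{N(T)}\sum_k \tau_k \to \mathbb{E}_\rho(T_D)$ (the Palm law of an out-excursion is $T_D$ under $\rho$, by the strong Markov property at an exit from $D$). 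Comparing against the time-average of the $D^c$-occupation yields
\[
R\, \mathbb{E}_\rho(T_D) = \pi_M(D^c) = 1 - \tfrac{1}{N}.
\]

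Next, the residual hitting-time function $s\mapsto T_D(s)$ vanishes on in-intervals and equals $E_{k+1}-s$ on the $k$-th out-interval, so $\int_0^T T_D(s)\,ds = \sum_k \tau_k^2/2 + O(1)$. Letting $T \to \infty$, Fubini, stationarity, and the ergodic theorem give
\[
\mathbb{E}_{\pi_M}(T_D) = \lim_{T\to\infty} \frac{1}{T} \mathbb{E}\int_0^T T_D(s)\,ds = \frac{R}{2}\, \mathbb{E}_\rho\bigl(T_D^2\bigr).
\]
Chaining the two displays with Cauchy--Schwarz ($\mathbb{E}_\rho(T_D^2) \geq \mathbb{E}_\rho(T_D)^2$) yields
\[
\mathbb{E}_{\pi_M}(T_D) \geq \frac{1 - 1/N}{2}\, \mathbb{E}_\rho(T_D) = \Omega(N)
\]
by Lemma~\ref{prop_ergodic_return}.

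The main subtlety is that the out-excursion lengths $\tau_k$ are Markov-modulated rather than i.i.d., so the classical renewal-theoretic inspection paradox does not apply verbatim; both displayed identities instead require only stationarity and ergodicity of $M$, together with the fact that exits from $D$ occur in the stationary distribution $\rho$. Finiteness of the second moment is automatic on the finite irreducible chain $M$, so the passage of $L^1$-limits needed for the Fubini step is routine.
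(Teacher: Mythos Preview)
Your proof is correct and follows essentially the same route as the paper. The paper invokes \cite{aldous-fill-2014}, Proposition~3.21(ii), to obtain the density identity $\tfrac{\mathbb{P}_\pi(T_D\in\mathrm{d}t)}{1-1/N}=\tfrac{\mathbb{P}_\rho(T_D>t)}{\mathbb{E}_\rho(T_D)}$ and integrates to reach the same formula $\mathbb{E}_\pi(T_D)=\tfrac{1-1/N}{\mathbb{E}_\rho(T_D)}\cdot\tfrac{\mathbb{E}_\rho(T_D^2)}{2}$, then applies the identical second-moment bound; your Palm/ergodic derivation is exactly the renewal-reward argument underlying that cited result, so the two proofs coincide up to whether the key identity is quoted or derived.
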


\begin{proof}
From \cite{aldous-fill-2014}, Proposition 3.21(ii), for any $t>0$
\[
\frac{\mathbb{P}_\pi\left(T_D \in {\rm d}t\right)}{1-\tfrac{1}{N}}=\frac{\mathbb{P}_\rho\left(T_D > t\right)}{\mathbb{E}_\rho \left( T_D \right)}
\]
and so
\[
\mathbb{E}_\pi \left( T_D \right)
=\frac{1-\tfrac{1}{N}}{\mathbb{E}_\rho \left( T_D \right)}\int_0^\infty t \mathbb{P}_\rho\left(T_D > t\right) {\rm d}t
=\frac{1-\tfrac{1}{N}}{\mathbb{E}_\rho \left( T_D \right)}\cdot \frac{\mathbb{E}_\rho \left( T_D^2 \right)}{2}
\]
from which the result follows using $\mathbb{E}_\rho \left( T_D^2 \right) \geq \mathbb{E}_\rho \left( T_D \right)^2$.
\end{proof}

We can now prove one of the two main slow dynamic lower bounds: this one is relevant for the case $\beta+2\gamma>1$ where the stationary graph has (with high probability) a component of linear size.

\begin{proposition}\label{prop_last_singleton}
\[
\mathbb{E}\left(T_{\text{\textnormal{coal}}}\right)=\Omega\left( \frac{1}{\kappa}\log N\right)
\]
\end{proposition}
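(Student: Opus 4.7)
My plan is to exhibit, with probability bounded away from $0$, a vertex $i$ that remains graph-isolated throughout $[0,t]$ for $t=\Theta(\log N/\kappa)$. Any such vertex acts as an ``eternal singleton'' for the coalescing walker system on $[0,t]$: walker $(i)$ cannot move out (no incident edge) and no other walker can move to $i$ (no incident edge either), so $T_{\text{coal}}>t$ on this event, and the expectation bound follows.

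First, by \eqref{eq_weight} there are $\Omega(N)$ vertices $i$ with $w(i)=O(1)$; call this set $I$. Each such $i$ is isolated at time $0$ with probability $e^{-w(i)}=\Theta(1)$ under $\cL_{\rm SNR}$, and conditional on this, the rate at which $i$ acquires its first edge is at most $2\kappa w(i)=O(\kappa)$: a rate $\kappa(1-e^{-w(i)})$ from $i$'s own update resampling to a non-isolated configuration, plus $\sum_{j\ne i}\kappa p_{ij}\leq \kappa w(i)$ from other vertices' updates resampling to connect to $i$. Conditioning on the Poisson update counts $R_v(t)\sim \operatorname{Pois}(\kappa t)$ and averaging, one obtains $\mathbb{P}(A_i(t))\geq \exp(-w(i)(1+2\kappa t))$ for the event $A_i(t)$ that $i$ is isolated throughout $[0,t]$. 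Taking $t=\alpha\log N/\kappa$ with $\alpha$ sufficiently small in terms of $\sup_I w$, the sum $M_t:=\sum_{i\in I}\1_{A_i(t)}$ has $\mathbb{E}[M_t]\to\infty$ polynomially in $N$.

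Second, to upgrade to $\mathbb{P}(M_t\geq 1)\to 1$, I would apply Paley--Zygmund via a pair estimate. At each ring of an external clock $k\ne i,j$ the edges $\{i,k\}$ and $\{j,k\}$ are resampled by independent Bernoullis, so the only genuinely shared randomness between $A_i(t)$ and $A_j(t)$ is the edge $\{i,j\}$. Using the identity $(1-e^{-a})+(1-e^{-b})-(1-e^{-(a+b)})=(1-e^{-a})(1-e^{-b})$, a direct computation yields
\[
\frac{\mathbb{P}(A_i(t)\cap A_j(t))}{\mathbb{P}(A_i(t))\mathbb{P}(A_j(t))}=\exp\Bigl(w_{ij}+2\kappa t(1-e^{-w_{ij}})+\kappa t\sum_{k\ne i,j}(1-e^{-w_{ik}})(1-e^{-w_{jk}})\Bigr),
\]
where $w_{ij}:=-\log(1-p_{ij})=\beta N^{2\gamma-1}i^{-\gamma}j^{-\gamma}$. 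For $i,j\in I$ and $\gamma\in[0,1)$, the dominant sum $\sum_k w_{ik}w_{jk}=O(N^{2\gamma-2})$ (up to a logarithmic factor at $\gamma=\tfrac12$), so multiplied by $\kappa t=O(\log N)$ the exponent is $o(1)$ and $\mathbb{E}[M_t^2]\leq(1+o(1))\mathbb{E}[M_t]^2$. Paley--Zygmund then gives $\mathbb{P}(M_t\geq 1)\to 1$, and so $\mathbb{E}(T_{\text{coal}})\geq t\,\mathbb{P}(M_t\geq 1)=\Omega(\log N/\kappa)$.

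The main obstacle is the two-point estimate: updates at shared high-weight ``hub'' neighbours in principle couple the isolation of two low-weight vertices $i,j\in I$. The shared-clock bookkeeping above reduces this to the quantitative bound $\kappa t\sum_k w_{ik}w_{jk}=o(1)$, which holds with margin thanks to the polynomial decay $N^{2\gamma-2}$ uniform in $\gamma<1$.
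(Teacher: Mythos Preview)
Your proposal is correct and follows essentially the same strategy as the paper: restrict to the $\Omega(N)$ bounded-weight vertices, lower bound the first moment of the count of those that stay isolated throughout $[0,t_\epsilon]$ with $t_\epsilon=\epsilon\log N/\kappa$, and then apply Paley--Zygmund via a pair estimate showing the second-moment ratio is $1+o(1)$. Your two-point formula is in fact slightly more careful than the paper's version (the paper conditions on the initial singleton set $Z_0$ and writes a somewhat informal Poisson-arrival expression, tracking only the overlap edge $\{i,j\}$), but the extra shared-clock term $\kappa t\sum_k p_{ik}p_{jk}=O(N^{2\gamma-2}\log N)$ you identify is negligible and the conclusions coincide.
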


\begin{proof}
Look for singletons in the set of bounded weight
\[
W_C:=\left\{
i \in [N] : w(i)\leq C
\right\}
\]
where the induced subgraph is dominated by an Erd\H{o}s--R\'enyi graph with constant mean and so has $\Omega_{\p}(N)$ singletons at time $0$. Exploring $[N]\setminus W_C$ gives each such singleton no new edges independently with probability at least $e^{-C}$ and so we have some binomial thinning but still $\Omega_{\p}(N)$ singletons at time $0$ on the full graph.

For each of these singletons $v$ up to time $t$ on the dynamic graph, we have see the arrival of edges in the neighbourhood of $v$ with exactly Poisson distribution%
\[
\operatorname{Pois}\left( 2\kappa t w(v)\left(
1-\frac{w(v)}{w([N])}
\right) \right)
\preceq
\operatorname{Pois}\left( 2C t  \right)
\]
where the $2$ factor accounts in half for updates at $v$ and in half for updates elsewhere.

Let $Z_t$ count the number of vertices in $W_C$ that have had no neighbour throughout times $[0,t]$, and from the Poisson bound above
\[
\mathbb{E}\left(Z_{t_\epsilon}\big|Z_0\right)\geq Z_0 e^{-2\kappa {t_\epsilon} C}
=\Omega_{\p}\left(
N^{1-2C\epsilon}
\right)
\]
at time ${t_\epsilon}=\frac{\epsilon}{\kappa}\log N$. 
We also calculate the second moment
\[
\begin{split}
\mathbb{E}\left(Z^2_{t_\epsilon}\big|Z_0\right)=&
\sum_{\stackrel{i,j \in Z_0}{i \neq j}}
\exp\left(
-\kappa {t_\epsilon} \left[2w(i)\left(
1-\frac{w(i)}{w([N])}
\right)+2w(j)\left(
1-\frac{w(j)}{w([N])}
\right)-p_{ij}\right]
\right)\\
&+\sum_{i \in Z_0}
\exp\left(
-\kappa {t_\epsilon} \left[4w(i)\left(
1-\frac{w(i)}{w([N])}
\right)\right]
\right)\\
\end{split}
\]
which is the first moment squared apart from the subtraction of the overlapping edge, in particular
\[
\frac{\mathbb{E}\left(Z^2_{t_\epsilon}\big|Z_0\right)}{\mathbb{E}\left(Z_{t_\epsilon}\big|Z_0\right)^2} \leq \max_{i, j \in Z_0}e^{\kappa {t_\epsilon} p_{ij}}
\stackrel{\p}{\sim} e^{\frac{\epsilon C^2}{N}\log N}
\stackrel{\p}{\rightarrow}1
\]
and we conclude $Z_{t_\epsilon}
=\Omega_{\p}\left(
N^{1-2C\epsilon}\right)
$ by the Paley--Zygmund inequality. Hence, with high probability, there is no coalescence before any time $t_\epsilon$ with $2C\epsilon<1$. %
\end{proof}

We require the following very useful construction that relates the local limit to the graph, an adaptation of Proposition 3.1 in \cite{Norros_Reittu_2006} and proved in the same way.

\begin{proposition}\label{prop_construct_tree}
Construct the tree exploration of the neighbourhood of a vertex $v$ with the following algorithm:
\begin{enumerate}
\item \emph{explore} $v$ by giving it independently $\operatorname{Pois}(w(v))$ children (recall the weight \eqref{eq_weight});
\item\label{item_label} \emph{label} each child with an i.i.d. label in $[N] \ni i$ selected proportional to $i^{-\gamma}$;
\item \emph{explore} each vertex labelled $i$ by giving it independently $\operatorname{Pois}(w(i))$ children and then return to item \ref{item_label}.
\end{enumerate}

This (potentially infinite) tree can then be \emph{thinned} to a finite tree.
\begin{enumerate}
\setcounter{enumi}{3}
\item The root $v$ is \emph{unthinned}.
\item Iteratively, pick an arbitrary vertex adjacent to an unthinned vertex: 
if its label doesn't exist among unthinned vertices, it becomes unthinned; 
otherwise \emph{thin} the vertex by deleting it from the tree.
\item This procedure terminates with a tree containing at most $N$ unthinned vertices which is almost surely a spanning tree of the network (which spanning tree depending on the chosen order of vertices).
\end{enumerate}
\end{proposition}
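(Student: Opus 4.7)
The plan is to mimic the proof of Proposition 3.1 in Norros and Reittu, constructing a coupling between the exploration algorithm and the underlying NR multigraph from which the SNR simple graph is obtained by flattening.

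First, I realise the multigraph directly from independent Poissons: let $E_{ij} \sim \operatorname{Pois}(\lambda_{ij})$ with $\lambda_{ij} = \beta N^{2\gamma-1} i^{-\gamma} j^{-\gamma}$, independently over unordered pairs $\{i,j\}$. For each multigraph edge between $i$ and $j$ I place a half-edge at $i$ labelled $j$ and a half-edge at $j$ labelled $i$, and let $H_i$ be the multiset of all such labels at $i$. Standard Poisson thinning and combination yields $|H_i| \sim \operatorname{Pois}(w(i))$ and, conditionally on $|H_i|$, the labels are i.i.d. with distribution $\lambda_{ij}/w(i) \propto j^{-\gamma}$, exactly matching items (i)--(iii) of the algorithm.

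Second, I interpret the unthinned exploration as a breadth-first traversal of the multigraph in the coupled space. Exploring a newly unthinned vertex $i$ reveals $H_i$; the prescription to give $i$ a fresh $\operatorname{Pois}(w(i))$ family of labelled children agrees in law with $H_i$, and under the coupling I simply identify them. Crucially, $H_i$ is not independent of what has already been revealed: it contains the back-edge(s) to $i$'s parent and, potentially, to any other unthinned vertex connected to $i$ via a multigraph edge. These repeated labels are exactly the ones removed by the thinning rule in item (v), so that what remains at $i$ is one tree-edge per genuinely new neighbour in the simple graph.

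Third, I check the spanning-tree property. Acyclicity is automatic since each unthinned vertex is placed as the child of the unthinned vertex whose exploration first revealed its label. Connectivity to $v$ follows by induction on generation depth. For termination, each iteration of item (v) either creates a new unthinned label (of which there are at most $N$) or consumes a half-edge (of which there are almost surely finitely many). Once no adjacent children remain to classify, the unthinned set is closed under multigraph adjacency, hence equals the component of $v$; since connectedness is a property of the underlying simple graph, this is the same component there. The arbitrary choices of adjacent vertex in item (v) account for the dependence of the resulting spanning tree on the order.

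The main obstacle is a careful bookkeeping in the second paragraph: one must verify that, conditional on the exploration history up to vertex $i$, the coupled multiset $H_i$ still decomposes into the deterministic back-edges to already-unthinned vertices plus an independent i.i.d. sample with Poisson-distributed size, so that the algorithm's fresh-sampling step really does reproduce $H_i$ in the coupling.
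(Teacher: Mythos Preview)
Your approach is exactly what the paper intends: it states that the proposition is ``an adaptation of Proposition 3.1 in \cite{Norros_Reittu_2006} and proved in the same way,'' and your sketch faithfully reproduces that coupling argument. The only point to tighten is the phrase ``under the coupling I simply identify them'': you cannot identify the fresh $\operatorname{Pois}(w(i))$ family with $H_i$ conditionally (the back-edge counts are already determined in $H_i$ but fresh in the algorithm), and the correct fix---which your final paragraph already gestures at---is to couple only the children whose labels are not yet unthinned, since those are conditionally independent $\operatorname{Pois}(\lambda_{ij})$ in both constructions, while the back-labelled children are discarded by thinning and so need not be coupled at all.
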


To apply this exploration we will have to normalise that distribution $i^{-\gamma}$ on $i \in [N]$.

\begin{lemma}[ {\cite{jameson2003prime}}, Proposition 3.1.16 ]\label{lemma_sum}
For $\gamma \in (0,1)$
\[
\sum_{i =1}^N i^{-\gamma} = \frac{N^{1-\gamma}}{1-\gamma} + \zeta(\gamma) + O\left( N^{-\gamma} \right)
\]
where $\zeta(\gamma)<0$.
\end{lemma}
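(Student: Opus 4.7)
The plan is to use the classical integral-sum comparison refined by Euler--Maclaurin (to order $1$), exactly as in Jameson's treatment. First I would write
\[
\sum_{i=1}^N i^{-\gamma}-\int_1^N x^{-\gamma}\,\de x=\sum_{i=1}^{N-1}\int_{i}^{i+1}\bigl(i^{-\gamma}-x^{-\gamma}\bigr)\de x+N^{-\gamma}
\]
and evaluate the integral to extract the main term $\frac{N^{1-\gamma}}{1-\gamma}-\frac{1}{1-\gamma}$. Since $x^{-\gamma}$ is $C^\infty$ on $[1,\infty)$ with derivatives decaying like $x^{-\gamma-k}$, each summand on the right is $O(i^{-\gamma-1})$, so the series is absolutely convergent as $N\to\infty$ and the tail after the $N$-th term is bounded by $\int_N^\infty x^{-\gamma-1}\,\de x=O(N^{-\gamma})$. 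This already yields
\[
\sum_{i=1}^N i^{-\gamma}=\frac{N^{1-\gamma}}{1-\gamma}+c(\gamma)+O(N^{-\gamma})
\]
for some constant $c(\gamma)$ depending only on $\gamma$.

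Second, I would identify $c(\gamma)=\zeta(\gamma)$ via the standard analytic continuation formula
\[
\zeta(s)=\frac{1}{s-1}+\sum_{n=1}^\infty\left(n^{-s}-\int_n^{n+1}x^{-s}\,\de x\right),
\]
which converges for $\mathrm{Re}(s)>0$ and agrees with Riemann's $\zeta$ off the pole at $s=1$. Letting $N\to\infty$ in the expansion above and comparing to this formula gives $c(\gamma)=\zeta(\gamma)$ directly.

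Finally, for the sign I would invoke the Dirichlet eta representation $\eta(s)=(1-2^{1-s})\zeta(s)=\sum_{n\geq 1}(-1)^{n-1}n^{-s}$: for $\gamma\in(0,1)$ the series $\eta(\gamma)$ is a convergent alternating series with strictly decreasing positive terms and hence $\eta(\gamma)>0$, while $1-2^{1-\gamma}<0$, so $\zeta(\gamma)<0$. The only mildly delicate step is bookkeeping the remainder to the claimed rate $O(N^{-\gamma})$ (rather than the coarser $O(1)$ one gets from box comparison), but this is immediate from the one-step Taylor expansion $i^{-\gamma}-x^{-\gamma}=O(i^{-\gamma-1})$ for $x\in[i,i+1]$; no fancier Euler--Maclaurin machinery is needed.
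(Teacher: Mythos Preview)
Your proof is correct and complete. The paper does not supply its own proof of this lemma, citing instead Jameson's Proposition~3.1.16; your Euler--Maclaurin comparison, identification of the constant via the standard continuation formula for $\zeta$, and the sign argument through the Dirichlet eta function are exactly the classical route one finds there.
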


So, we have
\[
\frac{\beta }{1-\gamma}\left(
\frac{N}{i}
\right)^\gamma
\left(1
-O\left(
\frac{1}{N^{1-\gamma}}
\right)\right)\leq
w(i)
\leq \frac{\beta }{1-\gamma}\left(
\frac{N}{i}
\right)^\gamma.
\]

In particular we have the upper bound (for large $N$) by the pure power law, which is convenient in arguing the following lemma in which we have this pure power law $W$, and $W^*$ its size-biased version.

\begin{lemma}
When $\beta+2\gamma<1$, the stationary component of a walker $(X)$ has
\[
\mathbb{E}_\pi(\sC(X_t))\sim
\frac{\beta(1-2\gamma)}{(1-\gamma)^2(1-\beta-2\gamma)},
\]
in particular its expectation is bounded as $N \rightarrow \infty$.
\end{lemma}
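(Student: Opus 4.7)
The plan is to combine Proposition \ref{prop_construct_tree} with a subcritical branching-process calculation. The VSRW generator \eqref{eq_vsrw} has symmetric off-diagonal entries, so its stationary distribution $\pi$ is uniform on $[N]$, whence
\[
\mathbb{E}_\pi(\sC(X_t)) = \frac{1}{N}\sum_{v=1}^N \cL(\sC(v))
\]
is just the average component size under $\cL_{\rm SNR}$. Fixing $v$, Proposition \ref{prop_construct_tree} presents $\sC(v)$ (up to thinning) as a two-type Galton--Watson tree: the root has $\operatorname{Pois}(w(v))$ children, while every subsequent vertex, carrying an i.i.d.\ label $i \in [N]$ with probability proportional to $i^{-\gamma}$, produces an independent $\operatorname{Pois}(w(i))$ children.

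First I would compute the mean offspring of a non-root generation. Using the weight formula \eqref{eq_weight} together with Lemma \ref{lemma_sum} to evaluate both the normalising sum of $i^{-\gamma}$ and its integral against $w$, one gets
\[
\nu := \sum_{i=1}^N \frac{i^{-\gamma}}{\sum_{j=1}^N j^{-\gamma}}\, w(i)\longrightarrow \frac{\beta}{1-2\gamma},
\]
which is strictly less than $1$ exactly under the subcritical hypothesis $\beta+2\gamma<1$. In this regime the Galton--Watson tree rooted at $v$ has expected total (unthinned) size $1 + w(v)/(1-\nu)$.

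Next I would bound the thinning correction. Two i.i.d.\ labels from the distribution $i \mapsto i^{-\gamma}/\sum_j j^{-\gamma}$ collide with probability $O(N^{\gamma-1})$ by Lemma \ref{lemma_sum}, so within a tree of mean size $O(1+(N/v)^\gamma)$ the expected number of thinned vertices is $O(N^{3\gamma-1}/v^{2\gamma})$, which after averaging $\tfrac{1}{N}\sum_v$ contributes $o(1)$ using $\sum_v v^{-2\gamma} \asymp N^{1-2\gamma}$ (again Lemma \ref{lemma_sum}, and noting $2\gamma<1$ is forced by the hypothesis). Combining this with $\tfrac{1}{N}\sum_v w(v)\to \beta/(1-\gamma)^2$, once more from Lemma \ref{lemma_sum} applied to \eqref{eq_weight}, I obtain
\[
\mathbb{E}_\pi(\sC(X_t))\longrightarrow \frac{1}{1-\nu}\cdot\frac{\beta}{(1-\gamma)^2} = \frac{\beta(1-2\gamma)}{(1-\gamma)^2(1-\beta-2\gamma)}
\]
as claimed. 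The main obstacle I anticipate is this uniform-in-$v$ thinning estimate, especially for the hubs $v=O(1)$ where $w(v) \asymp N^\gamma$ is largest and the tree of expected size $(N/v)^\gamma$ can be genuinely long: one needs to verify that the naive pair-collision second-moment bound is not spoiled by correlations between siblings exploring near the same hub, so that the averaged thinning loss really is $o(1)$ rather than a constant-order correction.
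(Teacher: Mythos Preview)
Your approach is essentially the paper's: both reduce to the tree exploration of Proposition~\ref{prop_construct_tree} and compute the subcritical branching mean as $\mathbb{E}(W)/(1-\mathbb{E}(W^*))$ with the same values $\mathbb{E}(W)=\beta/(1-\gamma)^2$ and $\mathbb{E}(W^*)=\beta/(1-2\gamma)$. The paper packages this by passing to a dominating Pareto distribution for the weights, whereas you work with the discrete sums directly; the difference is cosmetic. For the upper bound both arguments are fine, since the unthinned tree stochastically dominates the component.

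The gap is in your thinning estimate for the lower bound. Writing $M_v$ for the unthinned tree size rooted at $v$, the expected number of label-collision pairs is controlled by $\mathbb{E}\big[M_v^2\big]$, not by $\big(\mathbb{E}[M_v]\big)^2$; your bound $O\big(N^{3\gamma-1}/v^{2\gamma}\big)$ tacitly uses the latter. When $\gamma>\tfrac{1}{3}$ the size-biased weight $W^*$ has tail index $\tfrac{1}{\gamma}-1<2$, so $\mathbb{E}\big[(W^*)^2\big]\asymp N^{3\gamma-1}$, whence the offspring variance and $\mathbb{E}\big[M_v^2\big]$ blow up polynomially in $N$. Your flagged worry about ``sibling correlations near hubs'' is not the real obstruction; it is this heavy tail of $W^*$. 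A corrected second-moment bound does still average to $o(1)$ because $\gamma<\tfrac{1}{2}$ is forced by the hypothesis, so your route can be repaired---though you must also account for a collision removing an entire subtree, not just one vertex. The paper sidesteps the second moment entirely: for the lower bound it simply notes that the Pareto weights can be coupled to the true weights for the first $o(N^{1-\gamma})$ explored vertices, which suffices since the subcritical tree is tight.
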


\begin{proof}
From \cite{fernleyortgiese}, Proposition 5.9, we have a stochastic upper bound by a two-stage \emph{mixed Poisson} Galton--Watson tree of Proposition \ref{prop_construct_tree}, which is also the weak local limit of the graph around a uniform vertex. Further, using Lemma \ref{lemma_sum}, for large $N$ we can bound the weight by the pure power law and so stochastically bound a random weight by a Pareto distribution, leading to the following supertree:
\begin{itemize}
\item The general vertex has i.i.d. Pareto-distributed weight $w^*\sim W^*$ where
\[
\p(W^*\geq x)=\begin{cases}
\left(
\frac{x}{\nicefrac{\beta}{1-\gamma}}
\right)^{1-\nicefrac{1}{\gamma}}, &
x \geq \frac{\beta}{1-\gamma},\\
1, & \text{otherwise},
\end{cases}
\]
and then offspring independently $\operatorname{Pois}(w^*)$;
\item The only exception is the root, which has Pareto-distributed weight $w\sim W$ where
\[
\p(W\geq x)=\begin{cases}
\left(
\frac{x}{\nicefrac{\beta}{1-\gamma}}
\right)^{-\nicefrac{1}{\gamma}}, &
x \geq \frac{\beta}{1-\gamma},\\
1, & \text{otherwise},
\end{cases}
\]
and then offspring independently $\operatorname{Pois}(w)$.
\end{itemize}

The mean size of this tree is then the mean degree of the root multiplied by the mean size of the one-stage Galton--Watson tree from each of its children
\[
\frac{\mathbb{E}(W)}{1-\mathbb{E}(W^*)}
=
\frac{\beta}{(1-\gamma)^2}
\cdot
\frac{1}{1-\frac{\beta}{1-2\gamma}}
\]
which reduces to the claimed expression. We have argued this as an upper bound, but Lemma \ref{lemma_sum} also demonstrates a lower bound that will couple this model with high probability to the real tree for the first $o(N^{1-\gamma})$ vertices.
\end{proof}

Finally, we find a lower bound on a larger order than Proposition \ref{prop_last_singleton} which uses the small components of subcritical graph parameters.

\begin{proposition}\label{prop_coalescing_components}
When $\beta+2\gamma < 1$ meeting expects to take time at least
\[
\mathbb{E}_\pi \left( T_D \right)=
\Omega\left(
\frac{N}{\kappa}
\right)
\]
\end{proposition}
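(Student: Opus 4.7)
The plan is to exploit the ergodic return identity used in the proof of Corollary \ref{corr_N_lower}, which, combined with Lemma \ref{prop_ergodic_return} (giving $\mathbb{E}_\rho(T_D) = \Theta(N)$), reduces the target $\mathbb{E}_\pi(T_D) = \Omega(N/\kappa)$ to a second-moment lower bound $\mathbb{E}_\rho(T_D^2) = \Omega(N^2/\kappa)$. I would deduce this from a lower tail estimate of the form $\mathbb{P}_\rho\bigl(T_D \geq c_1 N/\kappa\bigr) = \Omega(\kappa \wedge 1)$. When $\kappa$ does not vanish the conclusion $\Omega(N/\kappa) = \Omega(N)$ is already in Corollary \ref{corr_N_lower}, so I may assume $\kappa \ll 1$.

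The tail estimate splits into three pieces. First, under $\rho$ the two walkers sit at adjacent vertices of an (approximately stationary, by Lemma \ref{lem_asymptotic_equiv}) graph; by the bounded mean component size proved just above, Markov's inequality, and the bounded size-biased mean degree of the SNR model in the subcritical regime $\beta+2\gamma<1$, with constant probability both walkers lie inside a union of components of total size at most some constant $C$ independent of $N$. Second, with probability $\Omega(\kappa)$ the first transition of the chain $M$ out of this $\rho$-start is a vertex update at $X_0$ or $Y_0$, since the combined update rate $2\kappa$ competes with a bounded total VSRW jump rate; the connecting edge is then destroyed, and the resampled independent edges of \eqref{eq_nr} at the updated vertex fail to reach the other walker's small component with constant probability. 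The walkers are now in two disjoint small components.

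Third, from this disjoint configuration I would bound the further meeting time below by $\Omega(N/\kappa)$ as follows. Between consecutive updates of vertices lying in one of the two walkers' current components, the relevant part of the graph is frozen and the walkers perform VSRW on a pair of disjoint small components, so cannot meet. Such \emph{relevant} updates arrive at rate $O(\kappa)$ by the component size bound. At each relevant update the affected walker's component is replaced by a (conditionally) fresh component of the stationary graph, and by the same bounded-mean-component-size estimate this fresh component contains the other walker with probability $O(1/N)$. A union bound over the $O(\kappa t)$ relevant updates in $[0,t]$ shows that the meeting probability by time $t$ is $O(\kappa t/N)$, which yields $T_D = \Omega(N/\kappa)$ with the required $\Omega(1)$ probability.

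The main obstacle is the third step, specifically justifying that the affected walker's new component after each relevant update may be treated as a conditionally fresh sample from $\cL$, effectively independent of the frozen small component of the other walker. A clean route, I expect, is to condition on the edges outside an $O(1)$-size neighbourhood of the updated vertex (which on the good event contains the other walker's component), then use the independent-edge structure of \eqref{eq_nr} together with the tree stochastic domination of Proposition \ref{prop_construct_tree} to estimate directly the probability that the freshly resampled local component reaches into the other walker's frozen small component.
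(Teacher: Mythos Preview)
Your route through $\rho$ and the second-moment identity is valid arithmetic, but it is an unnecessary detour: the paper works directly from $\pi$. Under $\pi$ the two walkers are already in distinct components with probability $1-O(1/N)$, so there is no need for your step-2 event costing a factor $\Omega(\kappa)$ which you then recover via $\mathbb{E}_\rho(T_D^2)/\mathbb{E}_\rho(T_D)$. The paper simply shows $\mathbb{P}_\pi(T_D > cN/\kappa) = \Omega(1)$ and is done.

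For step 3 itself, the paper's argument is close to yours in spirit but organised differently, in a way that sidesteps your ``main obstacle''. Rather than tracking discrete ``relevant updates'' and arguing that each produces a conditionally fresh component, the paper bounds the \emph{instantaneous} rate at which the two components attach, as a function of the current component weights $w(\sC_t^{(1)}), w(\sC_t^{(2)})$. Two attachment mechanisms are identified: (A) an update at a walker's own vertex, after which the fresh exploration from that vertex is stochastically dominated by Galton--Watson trees via Proposition~\ref{prop_construct_tree}; and (B) an update at a vertex outside both components which draws fresh edges into both. Your sketch omits (B), and your claim that between relevant updates ``the relevant part of the graph is frozen'' is false precisely because of such external updates, which can enlarge either component or merge them. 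The paper then integrates the expected attachment rate over $[0,t]$: since the walkers and graph are stationary from $\pi$, so is each $\sC_t^{(i)}$, and on the event $\{\sC_t^{(1)} \neq \sC_t^{(2)}\}$ one bounds $\mathbb{E}[w(\sC_t^{(1)}) w(\sC_t^{(2)})]$ by $\mathbb{E}[w(\sC)]^2 = O(1)$. This yields an expected cumulative attachment clock of $O(\kappa t / N)$, and Markov's inequality finishes. Stationarity at each fixed time replaces the ``fresh sample after an update'' justification you flag as the obstacle.
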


\begin{proof}
The component of a stationary walker is a tight stationary process with power law tail. So, we upper bound the meeting time of two stationary walkers by the first time they share a component.

Write $\sC_t^{(1)}$ and $\sC_t^{(2)}$ for the components of walkers $X_t^{(1)}$ and $X_t^{(2)}$ respectively. The two attach by the following mechanisms:
\begin{itemize}
\item
The walker's vertex in $\sC^{(1)}$ of weight $w$ updates at rate $\kappa$, producing if it does an exploration contained by $ D\sim \operatorname{Pois}(w)$ i.i.d. Galton--Watson explorations. Explore just in the vertices of $[N]\setminus\sC^{(2)}$ first, which is still stochastically contained by the same Galton--Watson trees of total weights $M_1,\dots,M_D$. We then find an edge to $\sC^{(2)}$ with at most probability
\[
1-\exp\left(\left(
w+\sum_{i=1}^D M_i
\right)\frac{w(\sC^{(2)})}{w([N])}
\right)
\leq
\left(
w+\sum_{i=1}^D M_i
\right)\frac{w(\sC^{(2)})}{w([N])}
\]
and so this connection is happening at rate bounded by
\[
\begin{split}
A_t&=
\kappa \frac{
w(W_t^{(1)})\left(
1+\mathbb{E}\left( M \right)
\right)w(\sC_t^{(2)})
+
w(W_t^{(2)})\left(
1+\mathbb{E}\left( M \right)
\right)w(\sC_t^{(1)})
}{w([N])}\\
&\leq
\frac{
2\kappa\left(
1+\mathbb{E}\left( M \right)
\right)
w(\sC_t^{(1)})w(\sC_t^{(2)})
}{w([N])}.
\end{split}
\]
\item At rate bounded by
\[
B_t=
\kappa \frac{w(\sC^{(1)})w(\sC^{(2)})}{w([N])}
\]
a vertex outside of both components updates and connects to both, so attaching them.
\end{itemize}

On the event that $\sC_x^{(2)} \neq \sC_x^{(1)}$, we can generate $\sC_x^{(2)}$ as an i.i.d. copy of $\sC_x^{(1)}$. Hence
\[
\mathbb{E}\left(
B_x ; \sC_x^{(1)} \neq \sC_x^{(2)}
\right)
\leq
\kappa \frac{\mathbb{E}\left(w(\sC^{(1)})\right)^2}{w([N])}
\]
and similarly for $A_x$. So, the clock of interest 
\[
C_t=\int_0^t (A_x+B_x) \mathbbm{1}_{\sC_x^{(1)} \neq \sC_x^{(2)}} {\rm d}x
\]
has
\[
\mathbb{E}(C_t)
=\int_0^t \mathbb{E}(A_x+B_x; \sC_x^{(1)} \neq \sC_x^{(2)}) {\rm d}x
\leq\kappa t\frac{\left(\mathbb{E}(w(\sC))\right)^2}{w([N])}\left(
3+2\mathbb{E}\left( M \right)
\right)=\Theta\left(
\frac{\kappa t}{N}
\right).
\]

By upper bounding the rate of arrival we are constructing an exponential variable as a stochastic lower bound. Hence, with probability at least $\nicefrac{1}{e}$, there will be no meeting while $C_t<1$. Simply by Markov's inequality we have $\p(C_t>1)\leq \mathbb{E}(C_t)\leq \nicefrac{1}{2e}$ for some $t$ on the order $\nicefrac{N}{\kappa}$ and so, at this time $t$, we have not met with probability at least the difference $\nicefrac{1}{2e}$.
\end{proof}

\section{Upper bounds}

To upper bound consensus, as was noted in Lemma \ref{lemma_coal_cons}, we will have to consider not just meeting but full coalescence of $N$ walkers. The arguments change depending on whether the graph parameters are above the critical line so that we use the giant component to find other walkers, or below where small components must connect directly to each other.

\subsection{Without giant component}

The slightly simpler of the two cases is when $\beta+2\gamma<1$ which we consider in this section. First we collect some relatively standard definitions of mixing quantities.

\begin{definition}\label{def_mixing}
For a Markov chain on state space $S$ we have mixing distances
\[
d(t):=\frac{1}{2}\max_{x \in S}\| p^{(t)}_{x,\cdot}-\pi(\cdot) \|_1,
\]
\[
\bar{d}(t):=\frac{1}{2}\max_{x,y \in S}\| p^{(t)}_{x,\cdot}-p^{(t)}_{y,\cdot} \|_1,
\]
\[
s(t)=:\max_{y \in S}\left( 1-\min_{j \in S} \frac{p^{(t)}_{x, y}}{\pi(j)}\right),
\]
the mixing time $t_{\rm mix}$ is then defined as 
\[
t_{\text{\textnormal{mix}}}:=\min \Big\{ t\geq 0 : d(t)\leq \nicefrac{1}{e}\Big\},
\]
and by writing $Q$ for the generator of the chain we define relaxation time
\[ 
t_{\text{\textnormal{rel}}}:=\max \left\{ \frac{1}{\lambda} : \lambda \text{ a positive eigenvalue of } -Q \right\} \]
where this final definition also requires that the Markov chain is reversible and hence has a real spectrum.
\end{definition}

The threshold $\nicefrac{1}{e}$ is somewhat arbitrary but in Chapter 4 of \cite{aldous-fill-2014} we see that this definition provides $t_{\rm rel} \leq t_{\rm mix}$ for any reversible Markov chain.

\begin{proposition}\label{prop_graph_mixing}
Mixing of $G_\kappa$ is $O\left( \frac{1}{\kappa}\log N\right)$.
\end{proposition}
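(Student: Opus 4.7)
The approach is a one-step coupling that exploits the product-like structure of the vertex update dynamics. I would start two copies $(G_\kappa(t))_t$ and $(G'_\kappa(t))_t$ of the chain from arbitrary initial graphs and couple them through a single family of rate-$\kappa$ Poisson clocks (one per vertex) together with a single family of Bernoulli coins indexed by (update time, target vertex). Whenever the clock at a vertex $i$ rings, in both chains we simultaneously delete all edges incident to $i$ and resample each edge $\{i,j\}$ using the same common Bernoulli coin with success probability $p_{ij}$ from \eqref{eq_nr}.

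The key observation is that once every vertex has rung at least once, the two coupled chains agree everywhere. Indeed, at any time $t$ the state of edge $\{i,j\}$ in either chain is determined by the coin drawn at the most recent update of $i$ or $j$ in $[0,t]$, or is left at its initial value if no such update has occurred. So as soon as each of $i,j$ has updated at least once, both chains carry the same value on $\{i,j\}$; once this holds for every pair, the two graph processes coincide.

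Let $\tau$ denote the first time at which every vertex has fired. Then $\tau$ is the maximum of $N$ independent rate-$\kappa$ exponentials, and by the union bound
\[
\p(\tau > t) \leq N e^{-\kappa t}.
\]
The coupling inequality then yields $\bar d(t) \leq N e^{-\kappa t}$, and since $d(t) \leq \bar d(t)$, choosing $t = \tfrac{1+\log N}{\kappa}$ gives $d(t) \leq 1/e$. Thus $t_{\text{mix}} = O(\tfrac{1}{\kappa}\log N)$ as claimed.

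There is essentially no obstacle here: the argument is a coupon collector computation once the right coupling is set up. The only care needed is to use a single coin per (update event, target vertex) across both chains so that coincidence is preserved after each successive update, and to reduce $d(t)$ to $\bar d(t)$ when applying the coupling inequality with the paper's definitions from Definition \ref{def_mixing}.
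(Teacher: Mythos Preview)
Your proof is correct and uses essentially the same idea as the paper: once every vertex has updated, the state no longer depends on the initial graph, so the first time all $N$ rate-$\kappa$ clocks have rung is a coupon-collector time of order $\tfrac{1}{\kappa}\log N$. The paper packages this as a strong stationary time rather than a two-copy coupling, but the content is identical.
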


\begin{proof}
From any initial graph, after an update at $v \in [N]$ we have the edges incident to $v$ at stationarity. Vertex updates arrive at rate $\kappa N$ with Poisson process concentration, and the coupon collector problem tells us that the first time every vertex has updated is after $(1+o_{\p}(1))N \log N$ updates. So, we have constructed a strong stationary time for the graph at time \[(1+o_{\p}(1))\frac{(N \log N)}{\kappa N}.\]
\end{proof}

Graph mixing is now understood but we want to build this into mixing of the chain $M$ with attached walkers \eqref{eq_M} on the same timescale. First, we need a technical lemma on the stationary degree histories.

\begin{lemma}\label{lem_stat_degree}
For any $v \in [N]$ and $C\geq 5$
\[
\p_\cL\left(
\frac{1}{t}
\int_{s=0}^t
\de_s(v)
{\rm d}s
>(C+1) w(v)
\right)
\leq
4\exp
\left(
-\frac{Cw(v)}{5}
(\kappa t \vee 1)
\right).
\]
\end{lemma}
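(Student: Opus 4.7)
The plan is a Chernoff-type tail bound on $I := \int_0^t \de_s(v)\,{\rm d}s$ that exploits edge-wise conditional independence and splits into two regimes according to whether $\kappa t$ is small or large.

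\textbf{Step 1 (edge-wise decomposition).} First I would write
\[
I = \sum_{j \neq v} Y_j, \qquad Y_j := \int_0^t \mathbbm{1}_{v \sim j}(s)\,{\rm d}s.
\]
Under $\cL$, each indicator $\mathbbm{1}_{v \sim j}(\cdot)$ is a stationary two-state Markov chain refreshed to $\operatorname{Bernoulli}(p_{vj})$ at total rate $2\kappa$ (an update of $v$ or of $j$), so $\mathbb{E}_\cL Y_j = p_{vj}t$ and $\sum_{j\neq v} \mathbb{E}_\cL Y_j \leq w(v)t$.

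\textbf{Step 2 (conditional independence).} Let $\mathcal{T}_v$ denote the update times of $v$ in $[0,t]$ and $K := |\mathcal{T}_v| \sim \operatorname{Pois}(\kappa t)$. Given $\mathcal{T}_v$, the processes $(Y_j)_{j \neq v}$ are independent across $j$, because each depends only on $\mathcal{T}_v$, the independent update clock of $j$, and independent $\operatorname{Bernoulli}(p_{vj})$ values at each resample (including at the common $\mathcal{T}_v$-times, where $v$'s update draws fresh Bernoullis independently across $j$).

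\textbf{Step 3 (Chernoff with conditional MGF).} For $\lambda > 0$,
\[
\p_\cL\!\bigl(I > (C+1)w(v)t\bigr) \leq e^{-\lambda(C+1)w(v)t}\,\mathbb{E}\!\left[\prod_{j\neq v} \mathbb{E}[e^{\lambda Y_j}\mid \mathcal{T}_v]\right].
\]
Splitting $Y_j$ at the $K$ updates of $v$ into subintervals of length $L_i$, each piece begins at a fresh $\operatorname{Bernoulli}(p_{vj})$ and has $\int_{T_i}^{T_{i+1}} \mathbbm{1}_{v \sim j}(s)\,{\rm d}s \leq L_i$; a crude per-piece MGF bound then yields
\[
\mathbb{E}[e^{\lambda Y_j}\mid \mathcal{T}_v] \leq \exp\!\Bigl(p_{vj}\sum_{i=0}^{K}(e^{\lambda L_i}-1)\Bigr),
\]
and multiplying over $j$ gives the overall bound $\exp\bigl(w(v)\sum_i (e^{\lambda L_i}-1)\bigr)$.

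\textbf{Step 4 (case split and union bound).} In the slow regime $\kappa t \leq 1$, use $I \leq t \cdot \max_{s \leq t}\de_s(v)$; on the constant-probability event that $v$ has no update in $[0,t]$, the maximum equals $\de_0(v)$, a sum of independent Bernoullis of mean $w(v)$, and a standard Chernoff bound gives $\p(\de_0(v) > (C+1)w(v)) \leq e^{-C w(v)/5}$ for $C \geq 5$. In the fast regime $\kappa t > 1$, restrict to $K \in [\tfrac12 \kappa t, 2\kappa t]$ (costing one of the prefactor-$4$ terms via Poisson concentration for $K$), use convexity $\sum_i (e^{\lambda L_i}-1) \leq (K+1)(e^{\lambda t/(K+1)}-1)$, and pick $\lambda$ with $\lambda t/(K+1) \asymp \log(C+1)$ so that the Chernoff exponent balances to order $-Cw(v)\kappa t/5$. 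A union bound across (i) the tail of $K$, (ii) the tail of $\de_0(v)$, and (iii) the Chernoff event absorbs into the prefactor $4$.

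\textbf{Main obstacle.} The delicate part is the per-subinterval MGF bound in Step 3: one must simultaneously capture the Poissonian tail of $\de_0(v)$ (which dominates when $\kappa t \leq 1$) and the averaging gain from many updates (when $\kappa t \gg 1$) through a single Chernoff exponent. Interpolating smoothly between the two regimes while extracting the exact constants $\tfrac15$ and prefactor $4$ is where most of the work sits, and is what dictates the unusual $\kappa t \vee 1$ form in the bound.
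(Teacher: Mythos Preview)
Your Steps~1--3 are correct, but Step~4 has a genuine gap in the fast regime: the convexity inequality points the wrong way. Since $x\mapsto e^{\lambda x}-1$ is convex and $\sum_i L_i=t$, Jensen gives
\[
\sum_{i=0}^{K}\bigl(e^{\lambda L_i}-1\bigr)\;\geq\;(K+1)\bigl(e^{\lambda t/(K+1)}-1\bigr),
\]
not $\leq$. The deterministic supremum of the left-hand side over the simplex is $e^{\lambda t}-1$, attained when all the mass sits in a single interval, and plugging that in recovers only the slow-regime exponent with no $\kappa t$ gain. To salvage your route you would need genuine control of the random spacings --- for instance a large-deviation bound on $\max_i L_i$ given $K$ --- and even then one typically pays a $\log(\kappa t)$ factor in the exponent. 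The paper avoids this entirely: for $\kappa t>1$ it treats $(\de_s(v))_{s\geq 0}$ as a stationary reversible Markov chain on the neighbourhood hypercube, observes $t_{\rm rel}\leq 1/\kappa$, and invokes Lezaud's Chernoff inequality for additive functionals of Markov processes, which directly yields the exponent $-(1+o(1))Cw(v)\kappa t/4$ without any spacing analysis.

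Your slow-regime sketch is also off as written: even without updates at $v$, updates at other vertices add or remove edges to $v$, so $\max_{s\leq t}\de_s(v)\neq\de_0(v)$ in general; and restricting to a merely constant-probability event cannot bound the unconditional tail. (In fact your own Steps~1--3, with the crude bound $\sum_i(e^{\lambda L_i}-1)\leq e^{\lambda t}-1$ and optimisation in $\lambda$, already give the slow-regime estimate.) The paper's argument for $\kappa t\leq 1$ is different again: on each inter-update interval of $v$ it bounds the degree by the fresh degree plus a Poisson number (mean $\leq w(v)$) of new attachments from updates elsewhere, applies two Chernoff bounds, and then union-bounds over the $1+\operatorname{Pois}(\kappa t)\preceq 1+\operatorname{Pois}(1)$ such intervals via Markov's inequality --- this is where the prefactor~$4$ comes from.
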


\begin{proof}
First if $t \leq \frac{1}{\kappa}$ we use the Chernoff bound
\[
\mathbb{P}\left(
\de_0(v)>\frac{C+1}{2} w(v)
\right)
\leq e^{-\frac{C+1}{2} w(v)}
e^{(e-1) w(v)}
\]
and note that over a period of length at most $t$ we expect single vertices to attach (by an update elsewhere) numbering
\[
\kappa t \sum_{w \neq v} p_{v w} \leq w(v)
\]
which we bound with the same Chernoff bound. Given both Chernoff events, the maximum degree before the first update of $v$ is bounded by $(C+1) w(v)$ and so too must be the mean degree of interest.

When the vertex $v$ updates we iterate the argument. We are double-counting the time interval, but loosely we can say that we have $1+\operatorname{Pois}(\kappa t)\preceq 1+ \operatorname{Pois}(1)$ attempts. By Markov's inequality, the probability that one of them fails one of the two Chernoff bounds is bounded by twice the probability that the first one fails, yielding
\[
2\left(1-\left(
1-
e^{-\frac{C+1}{2} w(v)}
e^{(e-1) w(v)}
\right)^2\right)
\leq
4
e^{-\frac{C+1}{2} w(v)}
e^{(e-1) w(v)}
\leq
4
e^{-\frac{C}{4} w(v)}
\]
using in the final step that $C\geq 5$.

For larger $t$, the neighbourhood of $v$ has relaxation time
\[
\frac{1}{\lambda_1}=:
t_{\rm rel} \leq t_{\rm mix}\left(\frac{1}{e}\right)\leq \frac{1}{\kappa}
\]
by considering only the updates at $v$, and we use this to apply \cite{lezaud01}, Remark 1.2, with:
\[
b^2=w(v)(1+w(v)),
\qquad
a=N,
\]
\[
\gamma=Cw(v) \gg \frac{w(v)(1+w(v))}{N} = \frac{b^2}{a}
\]
which is then the result (recall $\cL(\de(v))<w(v)$)
\[
\p_\cL\left(
\int_{s=0}^t
\left(
\de_s(v)
-\cL(\de(v))
\right)
{\rm d}s
>C t w(v)
\right)
\leq
\exp
\left(
-(1+o(1))\frac{Cw(v)\kappa t}{4}
\right)
\]
with the limit being $N\rightarrow\infty$ and hence the other case of the claim for large $N$.
\end{proof}

Keeping degrees close to their weights in this sense facilitates the following result. The idea here is to formalise that a walker is mixed after seeing an update at the vertex it occupies: at least, with positive probability.

\begin{lemma}\label{lem_sep}
After any update at the vertex of a walker, it is stationary after a further unit time with probability $\nu_{\beta,\gamma}=\Omega(1)$.
\end{lemma}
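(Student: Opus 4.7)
My plan is to establish a partial strong stationary time at time $1$ for the joint chain $M=(W,G)$ of \eqref{eq_M} started from the post-update state. Concretely, I want to construct an event of probability $\nu_{\beta,\gamma}=\Omega(1)$ on which $M_1 \sim \pi \otimes \cL_{\rm SNR}$ exactly, which will upper bound separation distance at time 1 by $1-\nu_{\beta,\gamma}$.

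The first key observation is that the graph dynamics are autonomous of the walker: updates fire at each vertex at rate $\kappa$ regardless of $W$. Hence along the stationary chain, $G_t \sim \cL_{\rm SNR}$ for every $t\geq 0$, and in particular the post-update graph $G_0$ (star at $v$ freshly sampled, all other edges previously at stationarity) is already a draw from $\cL_{\rm SNR}$. The task therefore reduces to producing a positive-probability coupling of $W_1$ with a sample from the correct conditional distribution given the graph trajectory.

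The second step uses the freshly drawn star at $v$. By Lemma \ref{lem_stat_degree} applied at $t=1$, with probability $\Omega(1)$ the degree profile of $v$ over $[0,1]$ remains within a constant multiple of $w(v)$. On this degree-controlled event, and provided $w(v)$ exceeds some threshold depending only on $\beta,\gamma$, the walker takes at least one jump in $[0,1]$ with probability bounded below, and the destination is a uniformly chosen neighbour of $v$. By the Norros--Reittu independent-edge structure, integrating over the fresh star, this uniform neighbour has marginal law proportional to $p_{vj}/w(v)\propto (N/j)^\gamma$, which matches the target $\pi(j)\propto w(j)$ up to $\beta,\gamma$-dependent constants; this yields the desired pointwise domination $\Pr(W_1 \in \cdot\,|G_1) \geq \nu_{\beta,\gamma}\,\pi_{G_1}(\cdot)$ through a standard separation argument.

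The main obstacle, and where I would spend the most care, is twofold. First, the correlation between the walker's trajectory and the edges it exposes must not spoil the stationarity of the graph at time $1$: I would handle this with a just-in-time revelation scheme along the walker's path, so that unseen edges retain their unconditional $\cL_{\rm SNR}$ law (reducing the joint domination to the marginal one on the walker). Second, the case of a light starting vertex $v$ (with $w(v)$ below any fixed threshold) requires a separate treatment, since the walker may fail to jump; here I would exploit that under $\pi\otimes \cL_{\rm SNR}$ the stationary mass at light vertices is itself small, so a \emph{no-jump} event together with the graph-only stationarity of $G_1$ suffices to produce the domination at such $v$ — effectively absorbing the atypical-degree contributions into the constant $\nu_{\beta,\gamma}$ via Lemma \ref{lem_stat_degree}.
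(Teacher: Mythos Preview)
Your proposal contains a fundamental error: the stationary distribution $\pi$ of the VSRW \eqref{eq_vsrw} on the dynamic graph is \emph{uniform} on $[N]$, not proportional to $w(j)$. The generator $Q_{ij}=\mathbbm{1}_{i\sim j}$ is symmetric and the graph law is stationary, so $\pi(j)=1/N$; this is also how $\pi$ is used throughout the paper, e.g.\ in Lemma~\ref{prop_ergodic_return}. Your claimed matching ``$\pi(j)\propto w(j)$'' is therefore wrong, and so is your light-vertex argument, which relies on light vertices carrying small stationary mass --- under the uniform measure they in fact carry the \emph{bulk} of the mass.

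Once the target is corrected to uniform, the ``take one jump and read off the destination'' idea does give $\Omega(1/N)$ mass at every $j$ for the \emph{first} jump location, since $p_{vj}/w(v)\gtrsim w(N)/w([N])=\Omega(1/N)$. But this is not a bound on $\Pr(W_1=j)$: if $w(j)$ is large the walker will almost surely leave $j$ again before time $1$, and you offer no control on where it ends up. The paper's proof addresses precisely this via a two-period decomposition of $[0,t_i]$ with $t_i=1/(w(i)\vee 1)$: the walker is asked to stay at $i$ on $[0,(t_i-t_j)^+]$, then to jump to $j$ and stay there during a final window of length $t_i\wedge t_j\leq 1/(w(j)\vee 1)$. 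Lemma~\ref{lem_stat_degree} applied at \emph{both} $i$ and $j$ makes each ``stay'' step cost only a $\beta,\gamma$-constant, while the attempt-plus-edge calculation supplies the $\Omega(1/N)$ factor uniformly in $j$; separation at time $t_i\leq 1$ then propagates to time $1$. Your sketch has the first half of this mechanism but not the second, and the separate light-source treatment you propose is both unnecessary and based on the wrong $\pi$: the mass at the source vertex $j=i$ is provided simply by the event of no move at all.
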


\begin{proof}
Suppose the update occurs at $i\in [N]$ and by timeshift w.l.o.g. at time $0$. Consider the interval
\[
t_i:=\frac{1}{w(i) \vee 1},
\]
divide this time into two periods
\[
\left(
t_i-t_j
\right)^+
\quad
\text{then}
\quad
t_i \wedge t_j,
\]
and control that:
\begin{enumerate}%
\item the walker doesn't move in the first period and then moves nowhere other than $j$ in the second;
\item the walker attempts to move to $j$ in the second period and discovers $i \sim j$ when this attempt is made;
\item there are no further moves by the walker during the second period.
\end{enumerate}

The first and third points are controlled first by Lemma \ref{lem_stat_degree} at $i$ over both periods and at $j$ for just the second, from which for either $k=i$ or $k=j$
\[
\p_\cL\left(
\int_{s=0}^{t_k}
\de_s(k)
{\rm d}s
>C+1
\right)
\leq
\exp
\left(
-\frac{Cw(k)}{5}
\right)
\]
and so the probability of no step from $i$ in time $t_i$ is at least
\[
\left(
1-4e^{
-\nicefrac{w(i)}{5}
}
\right)
e^{-C-1}
\geq
\frac{C w(N)-10}{C w(N)-5}e^{-C-1}
\]
by taking $C$ sufficiently large, for example $C\geq \nicefrac{11}{\beta}$. From $j$, we have an extra edge but there we still have probability at least
\[
\frac{C w(N)-10}{C w(N)-5}
e^{-C-1-t_j}
>\frac{C w(N)-10}{C w(N)-5}
e^{-C-1}\frac{w(j)}{1 +w(j)}
.
\]

For (ii), the walker attempts to move to $j$ in the second period with probability
\[
1-e^{-t_i \wedge t_j}\geq\frac{1}{2+w(i)\vee w(j)}.
\]

When it does, we have edges between $i$ and $j$ at time $0$ (the simple graph version of) Poisson with mean $\nicefrac{w(i)w(j)}{w([N])}$ and so the edge is present with probability
\[
p_{ij}
\geq
\frac{w(i)w(j)}{w([N])+w(i)w(j)}.
\]

Overall we satisfy all three conditions with minimal probability at least
\[
\begin{split}
\Bigg(
\frac{C w(N)-10}{C w(N)-5} &e^{-C-1}
\Bigg)^2
\left(
\frac{ 1}{ 2+w(i)\vee w(j)}
\cdot
\frac{w(i)w(j)}{w([N])+w(i)w(j)}
\cdot
\frac{w(j)}{ 1 +w(j)}
\right)\\
&=
\left(
\frac{C w(N)-10}{C w(N)-5}e^{-C-1}
\right)^2
\left(
\frac{ w(i)\vee w(j)}{ 2+w(i)\vee w(j)}
\cdot
\frac{w(i) \wedge w(j)}{w([N])+w(i)w(j)}
\cdot
\frac{w(j)}{ 1 +w(j)}
\right)\\
&\geq
\left(
\frac{C w(N)-10}{C w(N)-5}e^{-C-1}
\right)^2
\left(
\frac{w(N)}{2+w(N)}
\right)
\left(
\frac{w(N)}{1+w(N)}
\right)
\left(
\frac{w(i) \wedge w(j)}{w([N])+w(i)w(j)}
\right)\\
&\geq
e^{-2C-1}
\left(
\frac{w(N)}{2+w(N)}
\right)^2
\left(
\frac{w(N)}{w([N])+w(N)w(1)}
\right)=\Omega\left( \frac{1}{N} \right). \\
\end{split}
\]
Note in the fourth line we require $C$ large. Then, multiply this by an $N-1$ factor for the claimed constant probability.

At $i$, we simply have a $\left(
\frac{C w(N)-10}{C w(N)-5}e^{-C-1}
\right)=\Omega(1)$ probability that it never moved and thus we have achieved a constant separation over all of $[N]$ at time $t_i$, i.e. a strong stationary time with positive probability. This is then preserved to time $1\geq t_i$ as claimed.
\end{proof}

This construction immediately allows us to extend Proposition \ref{prop_graph_mixing} to a result on the mixing of the chain \eqref{eq_M} with walkers.

\begin{corollary}\label{corr_mixing}
Mixing of the two-walker chain $M$ is $O\left(1 + \frac{1}{\kappa }\log N\right)$.
\end{corollary}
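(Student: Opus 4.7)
The plan is to compose a strong stationary time for the graph component, provided by Proposition \ref{prop_graph_mixing}, with iterated applications of Lemma \ref{lem_sep} for each of the two walkers. Since the stationary measure of $M$ is the product $\pi \otimes \pi \otimes \cL_{\rm SNR}$ and the walkers do not influence the graph or each other, these three components can be mixed essentially in parallel. Proposition \ref{prop_graph_mixing} provides a strong stationary time $T_g$ for $G_\kappa$ via the coupon-collector time of the first update at every vertex; standard coupon-collector concentration yields $T_g \leq C\tfrac{\log N}{\kappa}$ except on an event of probability at most $\tfrac{1}{3e}$, for some constant $C$.

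For each walker $W^{(k)}$, $k \in \{1,2\}$, starting from time $T_g$ I iterate Lemma \ref{lem_sep}: wait for the next update at the walker's current vertex, then wait a further unit of time. Since updates at every vertex arrive at rate $\kappa$ independently of one another and of the walker's movement, the waiting time for such an update is $\operatorname{Exp}(\kappa)$. By Lemma \ref{lem_sep} each such attempt places the walker at law $\pi$ with probability at least $\nu_{\beta,\gamma} = \Omega(1)$, and if it fails I iterate. The number of attempts is $\operatorname{Geom}(\nu_{\beta,\gamma})$ with bounded mean and each attempt has expected duration $1 + \tfrac{1}{\kappa}$, so the total additional waiting time has mean $O(1 + \tfrac{1}{\kappa})$ and by Markov's inequality does not exceed $C'(1 + \tfrac{1}{\kappa})$ except on an event of probability at most $\tfrac{1}{3e}$, for a sufficiently large constant $C'$.

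Given the graph trajectory the two walkers' paths are conditionally independent, so a union bound over the three failure events (graph not stationary, $W^{(1)}$ not stationary, $W^{(2)}$ not stationary) produces a strong stationary time for $M$ bounded by $t^* = C\tfrac{\log N}{\kappa} + C'(1+\tfrac{1}{\kappa}) = O(1 + \tfrac{\log N}{\kappa})$ except on an event of probability at most $\tfrac{1}{e}$, immediately giving $t_{\rm mix}(M) \leq t^*$ via the standard bound $d(t) \leq \p(\text{strong stationary time} > t)$. The main point to verify carefully is that iterated applications of Lemma \ref{lem_sep} are genuinely independent trials: this follows because each trial uses only the freshly resampled edges at the walker's current vertex at the triggering update event---independent of the graph's history by the Markov property---together with a unit-time window that can be analysed as in the single trial already carried out in Lemma \ref{lem_sep}.
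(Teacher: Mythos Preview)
Your proposal is correct and takes essentially the same approach as the paper: compose the graph strong stationary time of Proposition \ref{prop_graph_mixing} with iterated applications of Lemma \ref{lem_sep} for each walker, and combine via a union bound. The only cosmetic difference is the order---you mix the graph first and then the walkers, whereas the paper mixes the walkers first (observing that their uniform distribution is preserved thereafter regardless of the graph) and then waits the additional $O(\tfrac{1}{\kappa}\log N)$ for the graph to mix.
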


\begin{proof}
We rely on Lemma \ref{lem_sep}. 
For $C$ large enough, after time 
\[
3 \cdot \left( C + \frac{1}{\kappa} \right) \cdot 
\frac{1}{\nu_{\beta,\gamma}}=O\left(1 + \frac{1}{\kappa }\right)
\]
we have a small probability to have not seen strong stationary times for both walkers.

Once both walkers are independently stationary, this is preserved regardless of the graph and so we can wait the additional $O(\tfrac{1}{\kappa}\log N)$ time of Proposition \ref{prop_graph_mixing} for the graph to also mix.
\end{proof}

From just sampling stationary positions with this lemma we have good control on the coalescence time of $\nicefrac{1}{\kappa}$ walkers in the subcritical graph, which is the full upper bound of the subcritical case of Theorem \ref{theorem_dynamic_cons} when $\kappa\leq \nicefrac{1}{N}$. To use uniforms for coalescence, we recall the birthday problem result.

\begin{lemma}[Birthday problem]\label{lem_birthday}
Let $X$ be the number of multiply occupied sites by $k$ i.i.d. uniform labels in $[N]$. We find when $k\leq \sqrt{N}$
\[
\p\left(
X =0
\right) =
\exp
\left(
-
\frac{k^2}{2N}
+
O\left(
\frac{k}{N}
\right)
\right)
.
\]
\end{lemma}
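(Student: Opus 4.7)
The plan is to compute $\p(X=0)$ directly as a product and then carefully expand the logarithm, keeping track of the error to the advertised order.

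First I would note that $\p(X=0)$ is exactly the probability that the $k$ labels are all distinct, which by inserting them one at a time gives the telescoping product
\[
\p(X=0) = \prod_{i=0}^{k-1}\left(1 - \frac{i}{N}\right).
\]
Taking logarithms this becomes $\sum_{i=1}^{k-1}\log(1 - i/N)$. Because $k \leq \sqrt{N}$, the largest argument satisfies $i/N \leq 1/\sqrt{N} \to 0$, so we may apply the Taylor expansion $\log(1-x) = -x - x^2/2 + O(x^3)$ uniformly across the sum.

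Next I would sum term by term. The linear term gives
\[
-\sum_{i=1}^{k-1}\frac{i}{N} = -\frac{(k-1)k}{2N} = -\frac{k^2}{2N} + \frac{k}{2N},
\]
which already produces the advertised leading constant. For the quadratic remainder, $\sum_{i=1}^{k-1} i^2/N^2 = O(k^3/N^2)$, and invoking the hypothesis $k \leq \sqrt{N}$ gives $k^3/N^2 = (k^2/N)(k/N) \leq k/N$, so this is absorbed into the $O(k/N)$ term. Higher-order terms in the Taylor expansion contribute at most $O(k^4/N^3) = O(k/N \cdot (k/\sqrt{N})^2) = O(k/N)$, again by the constraint $k \leq \sqrt{N}$. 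Collecting everything yields
\[
\log \p(X=0) = -\frac{k^2}{2N} + O\!\left(\frac{k}{N}\right),
\]
and exponentiating gives the claim.

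There is no real obstacle here: the only small care required is to verify that the cubic-and-higher terms in $\log(1-x)$ are swallowed by the $O(k/N)$ error, which is exactly where the hypothesis $k \leq \sqrt{N}$ is used.
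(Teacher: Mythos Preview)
Your argument is correct and is exactly the standard derivation; the paper in fact states this lemma without proof, treating it as a classical result, so there is nothing to compare against. One tiny slip: the factorisation $k^4/N^3 = (k/N)\cdot (k/\sqrt{N})^2$ is not an identity (the right-hand side is $k^3/N^2$), but the intended bound $k^4/N^3 \leq (k/N)\cdot(k^2/N)\cdot(k/N) \leq k/N$ under $k\leq\sqrt{N}$ is of course fine.
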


For the other regime where we expect a large number of coalescences, we have a brief second moment argument.

\begin{lemma}\label{lem_advanced_birthday}
Let $X$ be the number of multiply occupied sites by $k$ i.i.d. uniform labels in $[N]$. We find when $N \geq k \geq  \sqrt{N}$
\[
\p\left(
X \geq \frac{k^2}{6N}
\right) \geq
\frac{1}{73}
.
\]
\end{lemma}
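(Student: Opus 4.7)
The plan is to apply the Paley--Zygmund second moment inequality to $X$, after lower bounding $\mathbb{E}(X)$ and upper bounding $\operatorname{Var}(X)$.

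First I would decompose $X = \sum_{i=1}^N Y_i$ with $Y_i = \mathbbm{1}\{m_i \geq 2\}$, where $m_i$ is the number of balls in bin $i$. By symmetry, $\mathbb{E}(X) = Np$ where
\[
p = \p(m_1 \geq 2) = 1 - \left(1 + \tfrac{k-1}{N}\right)\left(1-\tfrac{1}{N}\right)^{k-1}.
\]
Using $(1-1/N)^{k-1} \leq e^{-(k-1)/N}$, together with the elementary inequality that $g(\mu) := 1 - (1+\mu)e^{-\mu}$ satisfies $g(\mu) \geq (1 - 2/e)\mu^2$ on $\mu \in [0,1]$ (which follows since $\mu \mapsto g(\mu)/\mu^2$ can be shown monotone decreasing on $[0,\infty)$ by a routine derivative check), this yields
\[
\mathbb{E}(X) \geq (1 - 2/e)(k-1)^2/N = 6(1-2/e) \cdot \frac{k^2}{6N} \cdot (1 - o(1)),
\]
with leading constant $6(1-2/e) \approx 1.58 > 1$.

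Next I would bound $\operatorname{Var}(X) \leq \mathbb{E}(X)$ by invoking the negative association of multinomial occupancy counts (a standard result, see for example Dubhashi--Ranjan): the $m_i$ are NA, so any collection of monotone functions of disjoint subsets are negatively correlated, and in particular the $Y_i$ are pairwise negatively correlated, giving $\operatorname{Var}(X) \leq \sum_i \operatorname{Var}(Y_i) \leq \mathbb{E}(X)$.

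Then I would split into two cases. In Case A, $k \leq \sqrt{6N}$, so the target $k^2/(6N) < 1$ and, by integrality of $X$, the event $\{X \geq k^2/(6N)\}$ reduces to $\{X \geq 1\}$; the standard bound $\p(X = 0) = \prod_{j=0}^{k-1}(1-j/N) \leq e^{-k(k-1)/(2N)} \leq e^{-1/2 + o(1)}$ (using $k \geq \sqrt{N}$) then gives $\p(X \geq 1) \geq 1 - e^{-1/2} - o(1) > 0.39$, comfortably above $1/73$. In Case B, $k > \sqrt{6N}$, the mean $\mathbb{E}(X) \geq 6(1-2/e)(1-o(1)) > 1$, and Paley--Zygmund with $\theta := (k^2/(6N))/\mathbb{E}(X) \leq 1/(6(1-2/e))$ gives
\[
\p\bigl(X \geq k^2/(6N)\bigr) \geq (1-\theta)^2 \frac{\mathbb{E}(X)^2}{\mathbb{E}(X^2)} \geq (1-\theta)^2 \frac{\mathbb{E}(X)}{1 + \mathbb{E}(X)},
\]
which evaluates to a universal constant of order $0.08$, again easily above $1/73$.

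The hard part will be the near-critical tightness of the first-moment estimate: Paley--Zygmund is informative only when $\mathbb{E}(X)$ exceeds $k^2/(6N)$ by a factor bounded away from $1$, and the Poisson-tail estimate saturates at the endpoint $\mu = 1$ with ratio $6(1-2/e) \approx 1.58$ that is just enough to close the argument. The constant $1/73$ in the statement is convenient rather than tight; a sharper universal constant would require a more refined analysis of the mean (or a direct control of the second factorial moment $\mathbb{E}(X(X-1))$).
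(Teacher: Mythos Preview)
Your argument is correct and uses the same Paley--Zygmund framework as the paper, but the technical implementation differs in two respects worth noting.

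First, where the paper passes to the lower bound $Y$ (sites with \emph{exactly} two balls) and computes $\mathbb{E}(Y^2)$ by hand via the pair probability $\p(u,v\in Y)$, you work with $X$ directly and replace the explicit second-moment calculation by the structural fact that multinomial occupancies are negatively associated, giving $\operatorname{Var}(X)\le \mathbb{E}(X)$ in one line. This is cleaner and avoids the somewhat delicate exponential bookkeeping in the paper's ratio $\mathbb{E}(Y^2)/\mathbb{E}(Y)^2$; the price is a slightly weaker variance bound, which you compensate for with the case split.

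Second, your case split is not actually needed: the Paley--Zygmund step already succeeds at $k=\sqrt{N}$, since there $\mathbb{E}(X)\ge (1-2/e)(1-o(1))\approx 0.26$ still gives $\theta\le 1/(6(1-2/e))\approx 0.63$ and $\mathbb{E}(X)/(1+\mathbb{E}(X))\ge 0.21$, so the product $(1-\theta)^2\cdot \mathbb{E}(X)/(1+\mathbb{E}(X))\gtrsim 0.028>1/73$. Handling $\sqrt{N}\le k\le \sqrt{6N}$ via the elementary birthday bound is harmless and gives a much better constant in that regime, but you could have run a single unified argument. Either route reaches the claimed $1/73$ with room to spare, for $N$ sufficiently large (as in the paper's own proof).
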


\begin{proof}
We stochastically lower bound $X \succeq Y$, where $Y$ represents the number of sites occupied by exactly $2$ vertices. For $u \in [N]$
\[
\p(u\in Y)
=
\binom{k}{2}\frac{1}{N^2}
\left(
1-\frac{1}{N}
\right)^{k-2}
\]
and so applying standard exponential inequalities
\begin{equation}\label{eq_exponential_ineq}
\frac{1}{N}\binom{k}{2}
e^{-\frac{k}{N-1}}
\leq
\mathbb{E}(Y)
\leq
\frac{1}{N-2}\binom{k}{2}
e^{-\frac{k}{N}}.
\end{equation}

For the second moment, take $u \neq v \in [N]$ where
\[
\p(u,v\in Y)
=
\binom{4}{2}
\cdot
\binom{k}{4}
\left(
\frac{2}{N}
\right)^4
\left(
1-\frac{2}{N}
\right)^{k-4}
\leq
\frac{4k^2(k-1)^2}{N^4}
e^{-\frac{2k-8}{N}}
\]
hence we upper bound
\[
\begin{split}
\mathbb{E}(Y^2)
&\leq
N\p(u\in Y)
+N^2\p(u,v\in Y)\\
&\leq 
\frac{1}{N-2}\binom{k}{2}
e^{-\frac{k}{N}}
+
\frac{4k^2(k-1)^2}{N^2}
e^{-\frac{2k-8}{N}}.
\end{split}
\]

We conclude by the Paley--Zygmund inequality
\[
\begin{split}
\p\left(
Y\geq \frac{\mathbb{E}(Y)}{2}
\right)
&\geq \frac{1}{4}
\cdot
\frac{\frac{1}{N^2}\binom{k}{2}^2
e^{-\frac{2k}{N-1}}}{\frac{1}{N-2}\binom{k}{2}
e^{-\frac{k}{N}}
+
\frac{4k^2(k-1)^2}{N^2}
e^{-\frac{2k-8}{N}}}\\
&=
\frac{1}{4}
\left(
\frac{N^2}{N-2} \frac{2}{k(k-1)}e^{\frac{k}{N}\frac{N+1}{N-1}}
+16e^{\frac{2k}{N-1}-\frac{2k-8}{N}}
\right)^{-1}\\
&\geq
\left(
8+64
+O\left(
\frac{1}{\sqrt{N}}
\right)
\right)^{-1}.
\end{split}
\]
which is the claimed result using \eqref{eq_exponential_ineq} because
\[
\frac{\mathbb{E}(Y)}{2}
\geq
\frac{1}{2N}\binom{k}{2}
e^{-\frac{N}{N-1}}
\geq
\frac{k(k-1)}{2Ne}\left(
1-\frac{1}{N-1}
\right)
\geq
\frac{k^2}{6N}.
\]
\end{proof}

These two regimes of the birthday problem combine to produce essentially the Kingman coalescent \cite{kingman} and the following order of coalescence time.

\begin{proposition}\label{prop_subcrit_coal}
From arbitrary initial positions and initial graph
\[
\mathbb{E}\left(
T_{\rm coal}
\right)
=
O\left(
\frac{N}{\kappa}
+N \log N
\right),
\]
\end{proposition}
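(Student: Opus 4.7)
The plan is to run the coalescing walker system in epochs of length $\tau := O(1 + \nicefrac{\log N}{\kappa})$, the mixing time of the two-walker chain $M$ from Corollary \ref{corr_mixing}, and to apply the birthday-problem estimates of Lemmas \ref{lem_birthday} and \ref{lem_advanced_birthday} to count coalescences in each epoch. At the end of each epoch, mixing brings the positions of any two surviving walkers close in total variation to two independent samples from $\pi$, so up to a bounded loss in success probability we may treat all surviving walkers as i.i.d.\ uniform on $[N]$ when applying the birthday lemmas, the point being that coalescences during the next epoch can be controlled by collisions of uniform labels.

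In the many-walker phase $k > \sqrt{N}$, I would apply Lemma \ref{lem_advanced_birthday} to obtain $\Omega(\nicefrac{k^2}{N})$ coalescences per epoch with probability at least $\nicefrac{1}{73}$. Iterating the resulting recursion $k_{n+1} \leq k_n - \Omega(\nicefrac{k_n^2}{N})$ (solvable by substituting $f_n = \nicefrac{N}{k_n}$, which grows by a positive constant per epoch in expectation) gives $k_n = O(\nicefrac{N}{n})$, so $O(\sqrt{N})$ epochs bring the walker count below $\sqrt{N}$. Martingale/Chernoff concentration around this expected decay handles the cases where the constant-probability event of Lemma \ref{lem_advanced_birthday} fails.

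In the few-walker phase $k \leq \sqrt{N}$, Lemma \ref{lem_birthday} produces at least one coalescence per epoch with probability $\Theta(\nicefrac{k^2}{N})$, so the expected number of epochs for $k \to k-1$ is $O(\nicefrac{N}{k^2})$. Summing $\sum_{k=2}^{\sqrt{N}} O(\nicefrac{N}{k^2}) = O(N)$ epochs total, whence the expected coalescence time bound from this argument would be $O(N\tau) = O(N + \nicefrac{N\log N}{\kappa})$.

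The main obstacle is sharpening the small-$\kappa$ bound from $O(\nicefrac{N\log N}{\kappa})$ down to the claimed $O(\nicefrac{N}{\kappa})$: the epoch-based argument pays a full mixing $\tau \asymp \nicefrac{\log N}{\kappa}$ for every coalescence event, losing a logarithmic factor. To close this gap I would exploit the bounded expected component size just established for subcritical parameters. A direct Kingman-style calculation then gives pair-coalescence rate $\Theta(\nicefrac{\kappa}{N})$ driven by updates that merge the walkers' (small) components, mirroring the rate computation in the lower-bound Proposition \ref{prop_coalescing_components}. Summing $\sum_{k=2}^N \nicefrac{N}{(k^2\kappa)} = O(\nicefrac{N}{\kappa})$ then avoids the logarithmic overhead, and the within-component meetings (once components are merged) contribute at most $O(N\log N)$ by the bounded component size plus a coupon-collector-type estimate, yielding the desired $O(\nicefrac{N}{\kappa} + N\log N)$.
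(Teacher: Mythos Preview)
Your birthday-problem scheme with Lemmas \ref{lem_birthday} and \ref{lem_advanced_birthday} is exactly the paper's mechanism, and you correctly diagnose that a fixed epoch of length $\tau = O(1 + \nicefrac{\log N}{\kappa})$ loses a logarithm in the $\nicefrac{1}{\kappa}$ term. But your proposed repair is pointed in the wrong direction and also unnecessary.

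Wrong direction: Proposition \ref{prop_coalescing_components} \emph{upper} bounds the rate at which two \emph{stationary} walkers' components merge; that is what delivers a \emph{lower} bound on meeting time. To upper bound coalescence you need the opposite inequality---a \emph{lower} bound on the merging rate from an \emph{arbitrary} configuration---and that does not follow by ``mirroring'' the calculation. Bounded expected component size is a statement about a stationary walker; after a coalescence, or from the arbitrary initial graph allowed by the proposition, a walker may sit in a component of order $\log N$ or larger, and you would still owe an argument that meeting follows promptly once components merge and that the $\binom{k}{2}$ pair events do not interfere. None of this is free. A secondary issue is that your repair commits to subcritical $\beta+2\gamma<1$, whereas the proposition as stated (and as later reused in Corollary \ref{cor_super_fast_upper} for supercritical parameters) must hold without that restriction.

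Unnecessary: the paper removes the extra logarithm not by changing mechanism but by shortening the epoch. Full mixing costs $\Theta(\nicefrac{\log N}{\kappa})$ because the whole graph must refresh, but the birthday argument only needs each walker \emph{individually} to be at a uniform site. By Lemma \ref{lem_sep}, a single update at a walker's current vertex (expected wait $\nicefrac{1}{\kappa}$) followed by unit time already places that walker at a uniform vertex with probability $\nu_{\beta,\gamma}=\Omega(1)$, and this is a separation/strong-stationary event, so the outcomes for different walkers are genuinely independent uniforms (which your two-walker mixing argument does not by itself provide for $k$ walkers). Giving each of the $w$ walkers one such attempt---pausing the others during each unit-time window---costs $w+\nicefrac{1}{\kappa}$ total, after which a $\operatorname{Bin}(w,\Omega(1))$ subset is i.i.d.\ uniform. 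The Kingman sum then becomes
\[
\sum_{w=2}^N O\!\left(\frac{N}{w^2}\right)\left(w+\frac{1}{\kappa}\right)=O\!\left(N\log N+\frac{N}{\kappa}\right),
\]
with the logarithm now arising from the harmless $\sum_w \nicefrac{1}{w}$ rather than multiplying $\nicefrac{1}{\kappa}$.
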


\begin{proof}

We upper bound coalescence by constructing a certain subset of meetings which we can control, and all other meetings not described are simply ignored for the construction of the upper bound. 

Suppose we have $w$ walkers, and allow time $\nicefrac{1}{\kappa}$ in which to see an update at each walker's site with probability $1-\nicefrac{1}{e}$. If a walker does see an update, pause the $\nicefrac{1}{\kappa}$ timers and ignore other updates for the time $1$ of Lemma \ref{lem_sep}. Giving each walker at most $1$ update in this way, we find a subset of the walkers numbering
\[
B \sim 
\operatorname{Bin}
\left(
w,
\nu_{\beta,\gamma}
\right)
\]
which are independently stationary at time $w+\nicefrac{1}{\kappa}$. When $w<\sqrt{N}$, we apply Lemma \ref{lem_birthday}
\[
\p\left(
X \geq 1
\right)
\sim\frac{k^2}{2N}
\]
and immediately from this geometric variable we have time $O(\nicefrac{N}{w^2})$ between coalescences.

For $w \geq \sqrt{N}$, the geometric of Lemma \ref{lem_advanced_birthday} produces an upper bound on the coalescence process taking steps of size $\Omega(\nicefrac{w^2}{N})$, which can be thought of as the same expression for the \emph{rate} of consensus and thus altogether produces the consensus order
\[
\sum_{w=2}^N O\left( \frac{N}{w^2} \right) \cdot
\left(
w+\frac{1}{\kappa}
\right)
=O\left(
\frac{N}{\kappa}
+N \log N
\right).
\]
\end{proof}

\subsection{With giant component}

It remains to consider the cases where $\beta+2\gamma>1$ and so we have a dynamic component of $\Omega_{\p}(N)$ vertices which will facilitate coalescence. First we recall the electrical network which is very powerful for understanding the return probabilities of a reversible chain, for a more detailed treatment see \cite{wilmer2009markov}, Chapter 9.

\begin{definition}[Electrical network]\label{def_electric}
A reversible continuous-time Markov chain corresponds to the electrical network with conductance $c(x,y)=\pi(x)q_{xy}$ between every $x \neq y$ in its state space. 
\end{definition}

To control returning to a subset with the electrical network, we have to short that subset (that is, connect all internal edges with $0$ resistance edges). For the Markov chain, this produces a \emph{collapsing} in the sense of \cite{aldous-fill-2014}, Section 2.7.3, turning the subset into a single state with rates averaged according to $\pi$.

\begin{lemma}\label{lemma_shorting}
The collapsed chain corresponds to the shorted electrical network.
\end{lemma}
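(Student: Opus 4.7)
The plan is to verify that two constructions yield the same reversible Markov chain by checking they have matching stationary measures and conductances, then invoking the one-to-one correspondence of Definition \ref{def_electric}. Because both the collapsed chain and the shorted network agree entirely on the complement of the identified subset $S$, the only work is to identify the edge weights involving the new, merged state.

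First I would recall explicitly the two constructions. The collapsing procedure of \cite{aldous-fill-2014}, Section 2.7.3, produces a chain on state space $(\mathrm{state space}\setminus S) \cup \{s\}$, where the new state $s$ replaces $S$, has stationary mass $\pi(s)=\pi(S):=\sum_{x\in S}\pi(x)$, and has rate
\[
\tilde q_{s,y} \;=\; \frac{1}{\pi(S)}\sum_{x\in S}\pi(x)\,q_{x,y}
\quad (y\notin S),
\]
with rates between vertices outside $S$ unchanged. Shorting $S$ in the electrical network means identifying the vertices of $S$ into a single node while keeping all external edges; the conductance between the shorted node and $y\notin S$ is then $\tilde c(s,y)=\sum_{x\in S}c(x,y)$, and unchanged conductances elsewhere. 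The stationary measure of the Markov chain produced by this shorted network (via Definition \ref{def_electric}) is proportional to the total conductance at each node, and in particular at $s$ is proportional to $\sum_{x\in S}\sum_{y}c(x,y)$, which is $\sum_{x\in S}\pi(x)$ up to the global normalisation inherited from the original network.

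Next I would compute the induced transition rate at $s$ via Definition \ref{def_electric}. Using $c(x,y)=\pi(x)q_{x,y}$, the shorted conductance is $\tilde c(s,y)=\sum_{x\in S}\pi(x)q_{x,y}$, and dividing by $\pi(s)=\pi(S)$ gives exactly the rate $\tilde q_{s,y}$ of the collapsed chain. Rates between pairs outside $S$ obviously agree, so the two chains are identical; I would also briefly note reversibility is preserved since $\tilde c$ remains symmetric, which is needed for Definition \ref{def_electric} to apply in the reverse direction.

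I don't anticipate any serious obstacle: the lemma is really a bookkeeping statement reconciling two standard definitions, and the main thing to be careful about is the normalisation of $\pi$ on the collapsed space (checking that the conductance-derived stationary mass at $s$ is $\pi(S)$ and not some other reweighting, which is automatic since we multiply and divide by the same $\pi(S)$ factor). The only substantive point is ensuring the average of rates out of $S$ is weighted by $\pi$, but this is exactly what appears when shorting because $c(x,y)=\pi(x)q_{x,y}$ already carries the $\pi$-weight.
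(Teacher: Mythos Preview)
Your proposal is correct and follows essentially the same approach as the paper: write down the collapsed rates $\tilde q_{s,y}=\pi(S)^{-1}\sum_{x\in S}\pi(x)q_{x,y}$ with $\tilde\pi(s)=\pi(S)$, then compute $\tilde c(s,y)=\tilde\pi(s)\tilde q_{s,y}=\sum_{x\in S}c(x,y)$ and observe this is exactly the shorted network. One minor caveat: your aside that the stationary measure of the chain associated to a network is ``proportional to the total conductance at each node'' is a discrete-time fact and need not hold for continuous-time chains as in Definition~\ref{def_electric} (where $c(x)=\pi(x)\sum_y q_{xy}$, not $\pi(x)$); fortunately your actual argument in the second paragraph proceeds in the correct direction (collapsed chain $\to$ conductances) and does not rely on this.
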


\begin{proof}
From a generator $Q=[q(x,y)]_{x=1 \, y=1}^{N \,  \, \,   \,  \,  \, N}$ we collapse the space $A$ to an element $a$ to obtain collapsed rates
\[
\begin{split}
\forall y \notin A : \,
\tilde{q}(a,y)
&=
\sum_{x \in A}\frac{\pi(x)q(x,y)}{\pi(A)},\\
\forall x \notin A : \,
\tilde{q}(x,a)
&=
\sum_{y \in A}q(x,y),\\
\forall x,y \notin A : \,
\tilde{q}(x,y)
&=
q(x,y).
\end{split}
\]

This can be quickly seen to be reversible, with stationary distribution $\tilde{\pi}(a)=\pi(A)$ and so the conductances of Definition \ref{def_electric} become
\[
\forall y \notin A : \,
\tilde{c}(a,y)=\sum_{x \in A}\pi(x)q(x,y)=\sum_{x \in A}c(x,y)
\]
thus the network $\tilde{c}$ is indeed the shorted version of $c$.
\end{proof}

We are now ready to prove a useful result on the escape probability. Here  $T^{(1)}_{\rm hit}(v)$ denotes the hitting time for walker $1$, i.e. for $M$ the first time that the first coordinate is $v$.  Recall that meeting is hitting of the the diagonal $D:=\{(i,i):i\in [N]\}\subset [N] \times [N]$.

\begin{proposition}\label{prop_escape}
When $\beta+2\gamma>1$, we find for the two-walker chain
\[
\inf_{v \in [N]}
\p_\rho\left(
T_D
>
T^{(1)}_{\rm hit}(v)
\right)
=\Omega\left(\frac{1}{\log N}\right).
\]
\end{proposition}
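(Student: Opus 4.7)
The plan is to invoke Lemma \ref{lemma_shorting} and short the diagonal $D$ to a single state $d$ in the reversible two-walker chain $M$. Writing $A_v = \{(v,j,g) : j \neq v\}$ for the ``walker-$1$ at $v$'' slice of the state space, the quantity of interest becomes an escape probability and by the standard electrical-network identity for reversible chains equals
\[
\p_\rho\left( T_D > T^{(1)}_{\rm hit}(v)\right) = \p_d\left( T_{A_v} < T_d^+\right) = \frac{1}{c(d) \, R_{\rm eff}(d, A_v)}.
\]
It therefore suffices to show $c(d) \, R_{\rm eff}(d, A_v) = O(\log N)$ uniformly in $v$.

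A direct calculation gives the conductance: each $(i,i,g) \in D$ leaves $D$ at rate $2\de_g(i)$ with stationary mass $\cL(g)/N^2$, so summing and using $\sum_i \cL(\de(i)) \sim \beta N/(1-\gamma)^2$ (by Lemma \ref{lemma_sum} applied in Definition \ref{def_cL}) produces $c(d) = \Theta(1/N)$, consistent with the scale of Lemma \ref{prop_ergodic_return}. The task that remains, and is the main obstacle, is to upper bound $R_{\rm eff}(d, A_v) \leq O(N \log N)$ uniformly in $v$.

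For this I would appeal to Thomson's principle and exhibit a unit flow $\theta$ from $d$ to $A_v$ of Dirichlet energy $O(N \log N)$. The natural choice is the current in the harmonic voltage problem, which is realised as a probabilistic flow by having walker $1$ trace out a hitting path of $v$ while walker $2$ and the graph evolve from their respective equilibria; such a flow has energy bounded by the expected time for walker $1$ to reach $v$ on the evolving graph. By Kac's formula applied to the single-walker chain (analogous to Lemma \ref{prop_ergodic_return}) the expected hitting time of $v$ from stationary start is $\Theta(N/w(v)) = O(N)$, and the worst-case start incurs at most a $\log N$ penalty from the single-walker mixing time, bounded by Lemma \ref{lem_sep} combined with Proposition \ref{prop_graph_mixing} as $O(1 + \log N/\kappa)$. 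The delicate point, which is the real technical obstacle, is the slow-updating regime $\kappa \ll 1$: there the $\log N$ factor must be extracted from the intrinsic graph geometry, specifically the expander-like mixing of the supercritical giant component that $\beta+2\gamma>1$ guarantees, rather than from graph updates, and one must handle separately the low-weight vertices $v$ that could transiently sit in small components before a graph update draws them into the giant.
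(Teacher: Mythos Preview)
Your electrical-network framework is the same as the paper's: short the diagonal, compute $c(d)=\Theta(1/N)$, and bound the effective resistance to the target set by exhibiting a flow of energy $O(N\log N)$. That overall structure is correct. The content of the proof, however, lies entirely in the flow construction, and there your proposal has a genuine gap.

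The assertion that the flow ``walker $1$ traces a hitting path of $v$ while walker $2$ and the graph evolve from equilibrium'' has energy equal to the expected hitting time of $v$ is not a standard identity and is not correct as stated. Under $\rho$ the two walkers are one step apart, not with walker $2$ stationary, so that flow does not even leave $d$ with the right exit distribution. More seriously, the bound must hold \emph{uniformly in $\kappa$}, in particular for $\kappa$ arbitrarily small; any argument routing through Proposition~\ref{prop_graph_mixing} or Lemma~\ref{lem_sep} picks up a $1/\kappa$ factor and fails there. You acknowledge this as ``the real technical obstacle'' but do not resolve it, and it is precisely the point of the proposition.

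The paper's flow avoids $\kappa$ entirely by using only walker edges within a \emph{fixed} graph. For each source vertex $x$ it restricts to a high-probability set $G_x$ of ``good'' graphs $g$ in which (i) $x$ and $v$ lie in the giant, (ii) the diameter is $O(\log N)$, (iii) after deleting a shortest $x$--$v$ path a breadth-first spanning tree from $x$ still has $\Omega(N)$ leaves. The flow first moves walker $2$ down that spanning tree to a uniform leaf (spreading the flow so each edge carries $O(1/N)$), then moves walker $1$ along the short path to $v$. The $\log N$ is exactly the diameter, giving energy $O(N\log N)$ with no dependence on $\kappa$. Establishing properties (i)--(iii) with uniformly positive probability, via the tree exploration of Proposition~\ref{prop_construct_tree}, is where the supercriticality hypothesis $\beta+2\gamma>1$ is actually used; your sketch never invokes it.
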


\begin{proof}
First note that
\[
\p_\rho\left(
T^{(1)}_{\rm hit}(v)=0 \, \bigg| \, \rho^-\in
\{(v,v)\}\times \sG_N
\right)=\frac{1}{2}
\]
and so the claim holds on this event.%

For the complement event, in the chain $M_t$ of \eqref{eq_M} we collapse the source set
\[
\blacklozenge=
\left\{
\left(
x,x,g
\right)
:
x \in [N] \setminus \{v\}, \, 
g \in \sG_N
\right\}
\]
and separately collapse the sink set
\[
\lozenge=
\left\{
\left(
v,y,g
\right)
:
y \in [N], \, 
g \in \sG_N
\right\}.
\]

Collapsing, as we saw in Lemma \ref{lemma_shorting}, corresponds to shorting the electrical network. 
Both source and sink then have stationary measure $\nicefrac{1}{N}$, moreover any graph $g$ containing an edge $\{x,y\} \not\ni v$ receives rate
\[
\widetilde{q}\left(
\blacklozenge,
(x,y,g)
\right)
=
\cL(g)
\frac{\nicefrac{2}{N^2}}{\nicefrac{1}{N}}
\]
which is constant in the edge. So, the exit distribution of $\blacklozenge$ selects a stationary graph biased with its total degree and then a uniform edge in that graph (with half probability if the edge contains $v$)---this is exactly the distribution $\rho$ conditioned on $\{\rho^-\notin\{
(v,v)\}\times \sG_N \}$ which is our interest.

On this shorted electrical network, we construct a unit flow from source to sink by which to control the resistance.

For some large enough $C$, let $G_{x}$ be the set of good graphs with the following properties. Note by Lemma \ref{lem_asymptotic_equiv} we only need to argue that they hold with positive probability for the stationary graph and then they will for $\rho$ by asymptotic equivalence.

\begin{enumerate}
\item\label{item_connected} $x,v \in [N]$ are connected. %
\item\label{item_removing_path} After removing the shortest path between $x$ and $v$, $x$ remains in a component for which the breadth-first exploration tree has at least $\nicefrac{N}{C}$ leaves. %
\item\label{item_diameter} The diameter of their component is at most $C \log N$. 
\item\label{item_edges} $g$ has at most $CN$ edges, .
\end{enumerate}

Property \ref{item_edges} holds with high probability, from the simple Chernoff bound as in the proof of Lemma \ref{lem_asymptotic_equiv}.

By the diameter result \cite{fernley2022discursive}, Theorem 2.3, we know Property \ref{item_diameter} occurs with high probability uniformly in $x$, for some $C=\Omega(1)$ large enough. 

For properties \ref{item_connected} and \ref{item_removing_path}, we recall the tree exploration of Proposition \ref{prop_construct_tree}. Construct the tree from $x$, where
\[
\p\left(
\de(x)\geq 2
\right)
\geq 1-e^{-\beta}(1+\beta).
\]

If we do see $\{\de(x)\geq 2\}$, pick an arbitrary two children. Because their offspring mean is larger than $1$, the Galton--Watson trees of both these children survive with approximately the positive probability $\rho^2$ of \cite{bollobas07}, Theorem 6.4.

Following the thinning procedure in the breadth-first order in both Galton--Watson trees, we can check say $\sqrt[3]{N}$ vertices in both these branches and with high probability not see any repeated labels: they are all unthinned.

Construct also an exploration from $v$ where $\p\left(
\de(v)\geq 1
\right)
\geq 1-e^{-\beta}$ and then conditionally that Galton--Watson tree survives with probability $\rho$. Thinning this Galton--Watson tree in the breadth-first order also leaves $\sqrt[3]{N}$ labels which are with high probability distinct from those in the previous label sets. Completing these explorations in any order, with high probability both $x$ and $v$ must be in the giant component and hence Property \ref{item_connected}.

Completing the exploration from the root $x$ in the normal breadth-first order, the first appearance of the label $v$ is in at most one of the two branches previously explored. Hence after deleting that path to $v$ we still have $\sqrt[3]{N}$ unthinned vertices and so the completed exploration will still with high probability form a spanning tree of the giant. We conclude Property \ref{item_removing_path} because we separately say the giant has $\Omega_{\p}(N)$ leaves with high probability.

The first two can happen together with positive probability on the same graph exploration and the last two are high probability conditions. So, we have 
\[\min_{x}\cL(G_{x})
\geq \rho^3 \left(
1-e^{-\beta}(1+\beta)
\right)\left(
1-e^{-\beta}
\right)-o(1)
=\Omega(1)\]
as $N\rightarrow\infty$.

For each $g \in G_{x}$, remove a shortest path $P_{g,x}$ between $x$ and $v$ and construct a spanning tree $T_{g,x}$ of the remaining giant component, where $T_{g,x}$ has root $x$ and has minimal depth (by for example a breadth-first exploration from $x$). Define a unit flow $w_{g,x}$ from $x$ to the leaves $L_{g,x}$ of $T_{g,x}$ ($w_{g,x}$ is a function on the vertices measuring the total flow into the vertex, with $w_{g,x}(x)=1$) such that $w_{g,x}$ is also \emph{uniform} on the leaves
\[
w_{g,x}\Big|_{L_{g,x}}\equiv \frac{1}{|L_{g,x}|}\leq\frac{C}{N}.
\]

Write $P_{g,x}=(a_0,a_1,\dots)$, $b_{\ell}$ for a vertex of depth $\ell$ in the tree and $b_{\ell+1}$ for a child of $b_{\ell}$. Whenever $b_\ell \notin L_{g,x}$ and so this child exists, define
\[
\theta_{g,x}\left(
(x, b_{\ell},g) \rightarrow (x, b_{\ell+1},g)
\right)
=
\frac{1}{N}
\cdot
\frac{\cL(g)
}{\cL(G_{x})}
\cdot
w_{g,x}(b_{\ell+1}))
\]
and upon hitting a leaf $b \in L_{g,x}$, the flow goes directly to the sink
\[
\theta_{g,x}\left(
(a_k, b,g) \rightarrow (a_{k+1}, b,g)
\right)
=
\frac{1}{N}
\cdot
\frac{\cL(g)
}{\cL(G_{x})}
\cdot
\frac{1}{|L_{g,x}|}
.
\]
Note that we have constructed a unit flow: dividing the flow by source vertex, then distributing over the set of good graphs with the conditional stationary distribution, and finally by requiring that $w$ is a unit flow. 
It remains to calculate the energy
\[
\cE(\theta)=\sum_e \theta(e)^2 r_e \qquad  \text{ for }\theta=\sum_{x = 1}^N\sum_{g \in G_x} \theta_{g,x}.
\]

Every edge of a second coordinate move has the source $x$ and graph $g$ attached: when defined on disjoint edges, the energy of the sum flow is simply the sum energy. Decomposing $\cE(\theta)=\cE^{(1)}(\theta)+\cE^{(2)}(\theta)$ into the sum over first coordinate and second coordinate edges respectively, this means
\[
\begin{split}
\cE^{(2)}(\theta)&=
\sum_{x=1}^N
\sum_{g \in G_x}
\sum_{b \in T_{g,x}\setminus \{x\}}
\frac{N^2}{\cL(g)}
\cdot
\left(
\frac{\cL(g)
}{N\cL(G_{x})}
w_{g,x}(b))
\right)^2\\
&\leq
\sum_{x=1}^N
\sum_{g \in G_x}
\frac{\cL(g)
}{\cL(G_{x})^2}
\sum_{b \in T_{g,x}\setminus \{x\}}
w_{g,x}(b)\\
&\leq
\frac{C \log N}{\min_{x}\cL(G_{x})^2}
\sum_{x=1}^N
\sum_{g \in G_x}
\cL(g)
=
\frac{C N \log N}{\min_{x}\cL(G_{x})^2}
\end{split}
\]
using that every walker edge has resistance $\nicefrac{N^2}{\cL(g)}$.

For a first coordinate move, we only need to total flow for the same $g$ and hit leaf $b$ but nevertheless we have $g\in G_y$ and $b \in V(g)$ for many other $y \in [N]$. That is to say, any edge $e \in E(g)$ cannot receive more flow as first coordinate (with second coordinate any particular leaf, and third coordinate $g$) than
\[
\sum_{y=1}^N
\mathbbm{1}_{g \in G_y}
\mathbbm{1}_{e \in P_{y,g}}
\frac{\cL(g)
}{N\cL(G_{y})|L_{g,y}|}
\leq
\frac{\cL(g)}{\min_{x}\cL(G_{x})}
\cdot
\frac{C\log N}{\tfrac{N}{C}-C\log N}
\sum_{y=1}^N
\mathbbm{1}_{g \in G_y}
\mathbbm{1}_{e \in P_{y,g}}
.
\]

From this we deduce, by again adding terms of resistance $\nicefrac{N^2}{\cL(g)}$,
\[
\begin{split}
\cE^{(1)}(\theta)
&\leq \frac{1}{\min_{x}\cL(G_{x})^2}
\sum_{g}
\sum_{b=1}^N
\sum_{e \in  E(g)}
 \cL(g)
\frac{C^2 }{N^2}
\left(
\sum_{y=1}^N
\mathbbm{1}_{g \in G_y}
\mathbbm{1}_{e \in P_{y,g}}
\right)^2\\
&\leq \frac{1}{\min_{x}\cL(G_{x})^2}
\sum_{g \in \bigcup_y G_y}
N
 \cL(g)
\frac{C^2 }{N^2}
\sum_{e \in  E(g)}
\sum_{y=1}^N
\sum_{z=1}^N
\mathbbm{1}_{e \in P_{y,g} \cap P_{z,g}}
\\
&\leq \frac{1}{\min_{x}\cL(G_{x})^2}
\sum_{g \in \bigcup_y G_y}
 \cL(g)
\frac{C^2 }{N}
\sum_{y=1}^N
\sum_{z=1}^N
\left|
P_{y,g} \cap P_{z,g}
\right|
\\
&\leq \frac{1}{\min_{x}\cL(G_{x})^2}
\sum_{g \in \bigcup_y G_y}
 \cL(g)
\frac{C^2 }{N}
\cdot N^2 \cdot C \log N
\leq \frac{C^3 N \log N}{\min_{x}\cL(G_{x})^2}.
\end{split}
\]

Note that the source set $\blacklozenge$ is only left by walker moves with no dependence on $\kappa$, and so we upper bound the conductance of the source
\[
c(\blacklozenge)\leq\sum_{g \in \sG_N}
\sum_{x=1}^N
\frac{\cL(g)}{N^2} \de_g(x)
=
O\left(
\frac{1}{N}
\right).
\]

We conclude by Proposition 9.10 of \cite{wilmer2009markov} that this energy bounds the resistance and then by Theorem 9.5 of \cite{wilmer2009markov} that the escape probability is at least
\[
\frac{1}{c(\blacklozenge)\left(\cE^{(1)}(\theta)+\cE^{(2)}(\theta)\right)}
\geq
\frac{\min_{x}\cL(G_{x})^2} {O(\nicefrac{1}{N}) N  \log N}
\]
uniformly in the target $v$, as claimed.
\end{proof}

The previous result on hitting without meeting we can manipulate into a result on \emph{mixing before meeting}. Recalling from Lemma \ref{prop_ergodic_return} that the expected meeting time from $\rho$ is always $\Theta(N)$; this will allow us to relate meeting from $\rho$ to the more difficult meeting from $\pi$.

Before that argument, we have the following technical lemma on the maximum degree in a vertex history.

\begin{lemma}\label{lem_max_deg}
For the dynamic graph, we find some universal constant $C$ and every $v \in [N]$
\[
\p_\rho\left(
\sup_{t<N^2}
\de_t(v)>C w(v)\log N
\right)
\leq
\frac{C w(v)}{N}
\]
\end{lemma}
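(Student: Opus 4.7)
The plan is to split $[0,N^2]$ along the update times of $v$ itself and then control the degree on each resulting sub-interval by its initial value plus a Poisson-bounded count of newly added edges. Concretely, let $0 < T_1 < \dots < T_{K_v} < N^2$ be the updates of $v$, so $K_v \sim \operatorname{Pois}(\kappa N^2)$, which is polynomial in $N$ and concentrated on its mean by Poisson Chernoff. Setting $T_0 = 0$, within each interval $[T_{i-1},T_i)$ the edges at $v$ only change through updates of neighbours, and so
\[
\sup_{t \in [T_{i-1},T_i)} \de_t(v) \,\leq\, \de_{T_{i-1}}(v) + A_i,
\]
where $A_i$ is the number of edge-addition events at $v$ inside the interval.

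For the initial degrees: at $T_0 = 0$ we have $\de_0(v) \preceq \operatorname{Pois}(w(v))$ by near-stationarity of $\rho$ (using Lemma \ref{lem_asymptotic_equiv} to absorb the size bias), and at each $T_i \geq T_1$, $\de_{T_i}(v) \sim \operatorname{Pois}(w(v))$ by the definition of a vertex update. The Poisson Chernoff
\[
\p\!\left(\operatorname{Pois}(w(v)) > \tfrac{C}{2} w(v)\log N \right) \,\leq\, \exp\!\left(-\tfrac{C w(v)}{2} \log N \cdot \log\!\tfrac{C\log N}{2e}\right)
\]
decays super-polynomially for $C$ large, so a union bound over the $K_v + 1$ sub-intervals still sits comfortably under $Cw(v)/(2N)$.

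For the additions: each $w \neq v$ triggers an addition of $\{v,w\}$ at rate at most $\kappa p_{vw}$, so conditionally on $\tau_i = T_i - T_{i-1}$ one has $A_i \preceq \operatorname{Pois}(\kappa \tau_i w(v))$. Since the $\tau_i \sim \operatorname{Exp}(\kappa)$ are independent, with super-polynomial confidence $\max_i \kappa \tau_i \leq c\log N$, on which event each $A_i \preceq \operatorname{Pois}(c\,w(v)\log N)$. A second Poisson Chernoff and union bound over the $K_v+1$ intervals deliver the other half of the claim.

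The hardest part will be calibrating $C$ so that the per-interval tail dominates both the $K_v = O(\kappa N^2)$ union bound and the extra $\log\log N$ factor inside the Poisson exponent, while still producing the tight $Cw(v)/N$ on the right-hand side rather than a merely super-polynomial bound. Because $w(v) \geq \beta/(1-\gamma) = \Omega(1)$ by \eqref{eq_weight}, the exponent $\Omega(w(v)\log N\,\log\log N)$ eventually dominates any polynomial in $N$, so the constant $C$ can be chosen as a function of $\beta$, $\gamma$, and the polynomial order $\alpha$ of $\kappa$.
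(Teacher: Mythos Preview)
Your interval-splitting approach is sound and genuinely different from the paper's proof. The paper instead couples the initial neighbourhood of $v$ under $\rho$ to its stationary law with failure probability $O(w(v)/N)$, and then on the coupled event treats the degree process as a stationary Markov chain on the $(N-1)$-cube, invoking hitting-time estimates from \cite{aldous-fill-2014} together with the relaxation-time bound of Corollary~\ref{corr_mixing}. In the paper's argument the dominant $Cw(v)/N$ contribution is exactly that coupling failure; everything else is $N^{-\Omega(C)}$. Your route avoids the spectral machinery entirely and, once the initial-degree step is handled correctly, actually yields a bound that is super-polynomially small in $N$, hence stronger than the stated $Cw(v)/N$.

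However, there is a real gap at the point where you write ``$\de_0(v)\preceq\operatorname{Pois}(w(v))$ by near-stationarity of $\rho$ (using Lemma~\ref{lem_asymptotic_equiv} to absorb the size bias)''. Lemma~\ref{lem_asymptotic_equiv} is a whole-graph coupling with failure probability $O(\log N/\sqrt{N})$, not a stochastic domination; if you simply add that coupling failure to your final bound you obtain $O(\log N/\sqrt{N})$, which for a typical vertex with $w(v)=\Theta(1)$ is far larger than the target $Cw(v)/N=\Theta(1/N)$. So the lemma as stated would not follow.

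The fix is easy, and in fact makes your argument cleaner. The graph marginal under $\rho$ is exactly the edge-count size-bias $\cL^*$, which can be realised as: pick one edge $e$ with probability proportional to $p_e$, force $e$ to be present, and leave all other edges independent with their $\cL$-Bernoulli laws. Hence under $\rho$ one has deterministically $\de_0(v)\preceq 1+\operatorname{Pois}(w(v))$, with the ``$+1$'' accounting for the possibly forced edge at $v$. This extra $1$ is harmless in your Chernoff bound, and no coupling error term enters at all. Alternatively, you could follow the paper and compute the total variation between the $\cL^*$-marginal and the $\cL$-marginal of the neighbourhood of $v$ specifically, which is $O(w(v)/N)$ rather than $O(\log N/\sqrt{N})$; either route closes the gap.
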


\begin{proof}
First we note that because a graph $g$ with degree $\de^{(g)}(v)=k$ has size-biased mass (see the proof of Lemma \ref{lem_asymptotic_equiv})
\[
\cL^*(g)=\frac{2k+\de^{(g\setminus\{k\})}([N]\setminus\{k\})}{\cL(\de^{(g)}([N]))}\cL(g)
\]
the initial neighbourhood on a size-biased graph has mass
\[
\cL^*(\de^{(g)}(v)=k)=
\frac{2k-2\cL(\de^{(g)}(v))+\cL(\de^{(g)}([N]))}{\cL(\de^{(g)}([N]))}\cL(\de^{(g)}(v)=k)
\]
and so, by calculating the total variation distance, we can couple the two neighbourhoods with probability bounded by
\[
\frac{2\cL(\de^{(g)}(v))}{\cL(\de^{(g)}([N]))}=O\left(
\frac{w(v)}{N}
\right)
\]
where the constant factor is uniform also in $v$.

If they are coupled, the degree process at $v$ is a biased stationary walk $(X_t)_t$ on the $(N-1)$-cube which redraws each individual edge at rate $\kappa$, and also jumps to an independent stationary position (or redraws all edges at once) at rate $\kappa$. Write $\pi$ on $\{0,1\}^{N-1}$ for the stationary measure of this process. We consider the subset $S=\left\{\sum \de(X) > C w(v)\log N\right\}$, and from the Chernoff bound
\[
\begin{split}
\pi\left(
S
\right)
&\leq
\p\left(
\operatorname{Pois}\left( w(v)\right) > C w(v)\log N
\right)
\leq
e^{ w(v)
(e-1)- C w(v)\log N
}\\
&\leq
e^{ \tfrac{\beta}{1-\gamma}\left(
e - C\log N
\right)
}
=N^{-\Omega(C)}.\\
\end{split}
\]

Collapsing the set $S$ produces a state with rate
\[
q(S)\leq \kappa\left(1+ \frac{C w(v)\log N}{w(N)}\right)
\]
and so for the hitting probability, by \cite{aldous-fill-2014}, Lemma 3.17 and Proposition 3.23,
\[
\begin{split}
\p_\pi\left(
\sup_{s<t} \de\left(X_s\right) >  C w(v)\log N
\right)
&=\p_\pi\left(
T_S<t
\right)
\leq \frac{t_{\rm rel}}{\mathbb{E}_\pi\left(
T_S
\right)}
+\exp\left(
-\frac{t}{\mathbb{E}_\pi\left(
T_S
\right)}
\right)\\
&=O\left(
(t+t_{\rm rel}) q(S)\pi(S)
\right)\\
&=
(\kappa t+\log N) (w(v)\log N)N^{-\Omega(C)}
\end{split}
\]
using Corollary \ref{corr_mixing} to bound $t_{\rm rel}$. Because $w(v)\leq N$, taking $C$ sufficiently large then makes this a smaller order than the initial probability to not couple to stationarity.
\end{proof}

Controlling degrees up to a polylogarithmic factor allows us to understand meeting up to a polylogarithmic factor in the following sense.

\begin{corollary}\label{corr_escape_prob}
For the two-walker chain
\[
\p_\rho\left(
T_D
>
\frac{N}{\log^5 N}
\right)
=\Omega\left(\frac{1}{\log N}\right).
\]
\end{corollary}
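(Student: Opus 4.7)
The plan is to apply Proposition \ref{prop_escape} to a carefully chosen target vertex $v^*$, selected so that walker $1$ is unlikely to reach $v^*$ within time $t := \nicefrac{N}{\log^5 N}$. Combined with the escape lower bound, this will force $T_D > t$ with the claimed probability.

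First I will reduce Proposition \ref{prop_escape} to a statement about $T_D$ by averaging. Since the set of vertices visited by walker $1$ in $[0,t]$ has cardinality at most $1+J_t$, where $J_t$ denotes the number of jumps in that interval,
\[
\sum_{v \in [N]} \p_\rho\big(T^{(1)}_{\rm hit}(v) \leq t\big) = \mathbb{E}_\rho\big[\big|\{v \in [N]: T^{(1)}_{\rm hit}(v) \leq t\}\big|\big] \leq 1 + \mathbb{E}_\rho[J_t].
\]
If I can show $\mathbb{E}_\rho[J_t] = o\big(\nicefrac{N}{\log N}\big)$, pigeonhole produces some $v^*$ with $\p_\rho\big(T^{(1)}_{\rm hit}(v^*) \leq t\big) = o\big(\nicefrac{1}{\log N}\big)$, and then Proposition \ref{prop_escape} applied at $v^*$ gives
\[
\p_\rho(T_D > t) \geq \p_\rho\big(T_D > T^{(1)}_{\rm hit}(v^*)\big) - \p_\rho\big(T^{(1)}_{\rm hit}(v^*) \leq t\big) = \Omega\big(\nicefrac{1}{\log N}\big),
\]
as required.

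For the jump count I will write $\mathbb{E}_\rho[J_t] = \int_0^t \mathbb{E}_\rho[\de_s(W_s^{(1)})]\,{\rm d}s$ and exploit that the stationary walker marginal is uniform on $[N]$ (by reversibility of the symmetric VSRW rates), so $\mathbb{E}_\pi[\de_0(W^{(1)})] = \tfrac{1}{N}\sum_v w(v) = O(1)$ and the integrand is $O(1)$ at stationarity. Transferring from $\pi$ to $\rho$ uses Lemma \ref{lem_asymptotic_equiv} (to absorb the size-biasing in $\rho$ up to a high-probability coupling), Lemma \ref{lem_max_deg} (to bound the degree history at each vertex by $Cw(v)\log N$ up to failure probability $Cw(v)/N$), and Corollary \ref{corr_mixing} (for $M$-chain mixing). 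Balancing the stationary contribution with the transient one gives $\mathbb{E}_\rho[J_t] = O(t\log^a N)$ for some $a<4$, sufficient for the pigeonhole argument to yield the required $v^*$.

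The main obstacle will be controlling the integral when $\kappa$ is small, since Corollary \ref{corr_mixing} then gives a mixing time much larger than $t$ and the walker marginal does not relax to uniform within $[0,t]$. In this slow-updating regime the bound must come directly from Lemma \ref{lem_max_deg}: with high probability no vertex $v$ visited by walker $1$ has history degree exceeding $Cw(v)\log N$, and combined with the power-law decay of weights along the walker's path this keeps the integral polylogarithmically close to its stationary value $O(t)$.
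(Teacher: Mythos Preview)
Your reduction is sound and is in fact the contrapositive of the paper's own argument: the paper assumes $\p_\rho(T_D>t)=o(1/\log N)$, combines with Proposition~\ref{prop_escape} to get $\p_\rho(T^{(1)}_{\rm hit}(v)<t)=\Omega(1/\log N)$ for \emph{every} $v$, and hence $\mathbb{E}_\rho[|\{\text{visited by time }t\}|]=\Omega(N/\log N)$; it then derives a contradiction by lower-bounding the time needed to visit that many vertices. So the pigeonhole step is exactly right.

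The genuine gap is that you never establish $\mathbb{E}_\rho[J_t]=O(t\log^a N)$. For small $\kappa$ you appeal to ``power-law decay of weights along the walker's path'', but this mechanism is false: under $\rho$ walker~1 sits at a degree-biased vertex, and the jump chain of the VSRW is simple random walk, whose stationary law is again degree-biased. Thus at \emph{every} discrete step the walker's weight is approximately weight-biased, with mean $\sum w(v)^2/\sum w(v)=\Theta(N^{(2\gamma-1)_+})$---polynomially large for $\gamma>\tfrac12$ and certainly not decaying. Lemma~\ref{lem_max_deg} only converts weight to degree up to a $\log N$ factor; it cannot make the weight at the walker's position small. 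The integrand $\mathbb{E}_\rho[\de_s(W_s^{(1)})]$ does relax to $O(1)$, but via mixing of the VSRW on the giant component, not via any pathwise decay, and you never invoke that mixing (Corollary~\ref{corr_mixing} controls the $M$-chain, which is far too slow here).

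The paper sidesteps this entirely by bounding the number of \emph{distinct vertices} rather than jumps. Distinct vertices are automatically dominated by low-weight ones (there are only $O(N/\log N)$ vertices of weight exceeding $\log^\gamma N$), and for those Lemma~\ref{lem_max_deg} caps the degree at $O(\log^{1+\gamma}N)$, so each contributes expected holding time $\Omega(\log^{-1-\gamma}N)$. Summing these holding times lower-bounds $t$ and gives the contradiction. Your route through $J_t$ is strictly harder because it forces you to control the jump rate while the walker occupies high-weight vertices, and that control is precisely what your sketch is missing.
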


\begin{proof}
Suppose for contradiction that instead
\[
\p_\rho\left(
T_D
>
\frac{N}{\log^5 N}
\right)
=o\left(\frac{1}{\log N}\right).
\]

From Proposition \ref{prop_escape}
\[
\p_\rho\left(
T^{(1)}_{\rm hit}(v)
<
\frac{N}{\log^5 N}
\right)
\geq
\p_\rho\left(
T_D
>
T^{(1)}_{\rm hit}(v)
\right)
-
\p_\rho\left(
T_D
>
\frac{N}{\log^5 N}
\right)
=\Omega\left(\frac{1}{\log N}\right)
\]
and so by time $\nicefrac{N}{\log^5 N}$ we expect to have covered
$
\Omega\left(\nicefrac{N}{\log N}\right)
$
vertices. 
By the Paley--Zygmund inequality with some small $\epsilon>0$, this exceeds $\nicefrac{\epsilon N}{\log N}$ with probability $
\Omega\left(\nicefrac{1}{\log^2 N}\right)
$. 
Necessarily, this includes some set $V$ of
\[
\Omega\left(\frac{N}{\log N}\right)
\text{ vertices of weight }
O\left(\log^\gamma N\right).
\]

By Lemma \ref{lem_max_deg}, any such vertex has probability $O\left(
\nicefrac{\log^\gamma N}{N}
\right)$ to ever see degree larger than $O\left(
\log^{1+\gamma} N
\right)$. Hence by Markov's inequality, with high probability at least half of $V$ satisfies the degree bound and so have expected holding time $\Omega\left(
\log^{-1-\gamma} N
\right)$.

The total expected time is then
\[
\Omega\left(\frac{N}{\log^{4+\gamma} N}\right)
\]
which contradicts that we are considering cover at time $\nicefrac{N}{\log^5 N}$.
\end{proof}

Now the point of this sequence of arguments, as was briefly mentioned before, is to turn the lower bound for meeting from $\rho$ into an upper bound for meeting from $\pi$.

\begin{corollary}\label{cor_large_k_meeting}
For the two-walker chain, when $\beta > 1-2\gamma$ and $\kappa\geq \frac{\log^7 N}{N}$
\[
\mathbb{E}_\pi\left(T_D\right)=
    O\left(  N \log N \right).
\]
\end{corollary}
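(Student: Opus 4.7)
The plan is to apply the strong Markov property at time $t^\star := N/\log^5 N$ to the chain started from $\rho$, combining Proposition \ref{prop_ergodic_return} (which gives $\mathbb{E}_\rho(T_D) = \Theta(N)$), Corollary \ref{corr_escape_prob} (which gives $\mathbb{P}_\rho(T_D > t^\star) \geq c/\log N$), and the mixing estimate of Corollary \ref{corr_mixing}. The crucial role of the hypothesis $\kappa \geq \log^7 N / N$ is that the mixing time $t_{\text{mix}} = O(N/\log^6 N) \leq t^\star/\log N$, so the total variation distance at time $t^\star$ satisfies $\|P^{t^\star}_\rho - \pi\|_{\text{TV}} \leq N^{-\Omega(1)}$.

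The central inequality, obtained by strong Markov at $t^\star$, is
\[
\mathbb{E}_\rho(T_D) \;\geq\; t^\star \, \mathbb{P}_\rho(T_D > t^\star) \;+\; \mathbb{E}_\rho\!\left( \mathbb{E}_{M_{t^\star}}(T_D) \, \mathbbm{1}_{T_D > t^\star} \right).
\]
On the event $\{T_D > t^\star\}$, mixing ensures that $M_{t^\star}$ has law close to $\pi_{D^c} \approx \pi$, so the second term on the right is approximately $\mathbb{E}_\pi(T_D) \cdot \mathbb{P}_\rho(T_D > t^\star)$. Rearranging with the two input bounds yields
\[
\mathbb{E}_\pi(T_D) \;\leq\; \frac{\mathbb{E}_\rho(T_D)}{\mathbb{P}_\rho(T_D > t^\star)} \, (1 + o(1)) \;=\; O(N \log N),
\]
which is the claim. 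Exactly as the text flags, this is the mechanism by which a \emph{lower} tail bound on meeting from $\rho$ becomes an \emph{upper} bound on meeting from $\pi$: the two are linked through the cycle structure of the reversible chain.

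The main obstacle is making the conditional mixing rigorous. Conditioning on $\{T_D > t^\star\}$, an event of $\rho$-probability only $\Omega(1/\log N)$, can inflate the unconditional TV error by a factor $O(\log N)$, yielding a conditional TV distance of order $O(\log N / N)$ between $\mathcal{L}(M_{t^\star} \mid T_D > t^\star)$ and $\pi_{D^c}$. This distance must be multiplied by $\sup_x \mathbb{E}_x(T_D)$ to bound the error term in the approximation, and we do not have a sharp a priori bound on this supremum. I would close the circularity by a short bootstrap: starting from a loose polynomial bound on $\sup_x \mathbb{E}_x(T_D)$ from standard commute-time estimates on reversible chains, apply the plan once to obtain $\mathbb{E}_\pi(T_D) = O(N \log N)$, and then lift this to a uniform bound $\sup_x \mathbb{E}_x(T_D) = O(N \log N)$ by noting that after a single mixing time $t_{\text{mix}}$ any starting state reduces to a small TV perturbation of $\pi$. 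An alternative formulation reduces instead to the second moment $\mathbb{E}_\rho(T_D^2) = O(N^2 \log N)$ via the Aldous--Fill cycle identity used in Corollary \ref{corr_N_lower}, where the same strong Markov decomposition provides the required upper tail on $T_D$ from $\rho$.
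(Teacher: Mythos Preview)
Your plan coincides with the paper's: combine $\mathbb{E}_\rho(T_D)=\Theta(N)$ from Lemma~\ref{prop_ergodic_return}, the escape estimate $\mathbb{P}_\rho(T_D>t^\star)=\Omega(1/\log N)$ from Corollary~\ref{corr_escape_prob}, and mixing by time $t^\star$ from Corollary~\ref{corr_mixing} to deduce $\mathbb{E}_\rho(T_D)\geq\Omega(1/\log N)\,\mathbb{E}_\pi(T_D)$. The paper's proof is a three-line sketch (``subtract [the TV distance] from the escape probability''), so you have correctly reconstructed its skeleton and, to your credit, flagged the conditioning subtlety it glosses over.

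However, your proposed handling of that subtlety has a gap. The assertion that conditioning on $\{T_D>t^\star\}$ merely inflates total variation by a factor $O(\log N)$ is not valid: conditioning a probability measure on an event of probability $p$ can move it by as much as $(1-p)/p$ in TV, and in general $d_{\mathrm{TV}}\bigl(\mathcal{L}(M_{t^\star}\mid T_D>t^\star),\pi\bigr)$ admits no useful bound in terms of $d_{\mathrm{TV}}\bigl(\mathcal{L}(M_{t^\star}),\pi\bigr)$ alone. Without such a bound the bootstrap cannot start---the error term it must absorb is (conditional TV)$\times\sup_x\mathbb{E}_x(T_D)$ and the first factor is uncontrolled---and the second-moment alternative runs into the same obstruction when you try to bound the tail of $T_D$ from $\rho$ past $t^\star$. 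What actually rescues the step is \emph{separation} rather than total variation. Corollary~\ref{corr_mixing} is proved via the strong stationary time of Lemma~\ref{lem_sep}, whose Doeblin-type minorisation is uniform in the current state; this lets one realise the chain so that with probability $1-o(1/\log N)$, by time $t^\star$ it sits at a fresh $\pi$-sample drawn \emph{independently of the trajectory so far}. Subtracting $o(1/\log N)$ from $\mathbb{P}_\rho(T_D>t^\star)$ then legitimately leaves an event of probability $\Omega(1/\log N)$ on which the remaining expected time is exactly $\mathbb{E}_\pi(T_D)$, with no circularity---this is the content of the paper's one-line step.
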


\begin{proof}
Because $\kappa\geq \nicefrac{(\log^7 N)}{N}$, by Corollary \ref{corr_mixing} the system $M$ mixes in time $O(\nicefrac{N}{\log^6 N})$.

Hence after time $\nicefrac{N}{\log^5 N}$ we have $o(\nicefrac{1}{\log N})$ total variation distance; subtract this from the escape probability of Corollary \ref{corr_escape_prob} to obtain
\[
\mathbb{E}_\rho\left(T_D\right)= \Omega\left(
\frac{\mathbb{E}_\pi\left(T_D\right)}{\log N}
\right)
\]
but then by Lemma \ref{prop_ergodic_return} this implies 
$
\mathbb{E}_\pi\left(T_D\right)=O\left(
N \log N
\right).
$
\end{proof}

Upper bounding meeting gives an easy upper bound for coalescence, as long as we are accepting log factor corrections.

\begin{corollary}\label{cor_super_fast_upper}
When $\frac{\log^7 N}{N} \leq \kappa \leq \frac{1}{\log N}$ and $\beta > 1-2\gamma$
\[
\mathbb{E}\left(
T_{\rm coal}
\right)=
O\left(  N \log^2 N \right)
\]
or when $\kappa \geq \frac{1}{\log N}$
\[
\mathbb{E}\left(
T_{\rm coal}
\right)=
O\left(  N \log N \right).
\]
\end{corollary}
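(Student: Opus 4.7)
The plan is to iterate the pair meeting bound of Corollary \ref{cor_large_k_meeting} to drive full coalescence, combined with the walker-pair mixing bound of Corollary \ref{corr_mixing}. Set $t_{\rm mix}=O(1+\tfrac{\log N}{\kappa})$ and choose $t_{\rm meet}=2CN\log N$ large enough that Markov's inequality applied to Corollary \ref{cor_large_k_meeting} gives $\mathbb{P}_\pi(T_D>t_{\rm meet})\leq \tfrac{1}{2}$.

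A \emph{halving epoch} of length $\tau=t_{\rm mix}+t_{\rm meet}$ starts with $w$ distinct walker labels, waits $t_{\rm mix}$ to bring each walker pair $o(1)$-close to $\pi\otimes\pi$, then arbitrarily forms $\lfloor w/2\rfloor$ disjoint pairs, each of which meets in the remaining $t_{\rm meet}$ window with probability $\geq \tfrac{1}{2}-o(1)$. In expectation at least $w/5$ pairs meet, and a Paley--Zygmund second-moment bound (using near-independence of disjoint pairs after mixing) gives constant probability that a constant fraction coalesce; failed epochs cost only a constant expected factor, so in expected $O(1)$ epochs the walker count drops below $9w/10$. Iterating $O(\log N)$ such halving steps drives the walker count to $1$, yielding $\mathbb{E}(T_{\rm coal})\leq O(\log N)\cdot\tau = O(N\log^2 N + t_{\rm mix}\log N)$.

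For the slower regime $\tfrac{\log^7 N}{N}\leq\kappa\leq\tfrac{1}{\log N}$ one checks $t_{\rm mix}\log N\leq \log^2 N/\kappa\leq O(N/\log^5 N)$ is lower order, giving the claimed $O(N\log^2 N)$. For the faster regime $\kappa\geq\tfrac{1}{\log N}$ the same halving argument only yields $O(N\log^2 N)$ and a further factor of $\log N$ must be shaved. Since here $t_{\rm mix}=O(\log^2 N)\ll N\log N=t_{\rm meet}$, I would invoke a mean-field coalescence argument: with $w$ walkers essentially at $\pi^{\otimes w}$, the $\binom{w}{2}$ pair meetings behave as approximately independent exponentials of mean $\Theta(N\log N)$, so the time between consecutive coalescences is $\Theta(N\log N/w^2)$ and summing $\sum_{w=2}^N N\log N/w^2=O(N\log N)$ gives the sharper bound. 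The main obstacle is justifying this mean-field independence: I would control it by a second moment on pair meetings in windows of length $t_{\rm meet}$, using that disjoint pairs share no walker and decorrelate through the dynamic graph on a scale much shorter than $t_{\rm meet}$, while pairs sharing a walker number only $O(w)\ll\binom{w}{2}$ and contribute a lower-order correction; equivalently one can invoke an Oliveira-style mean-field coalescence theorem for reversible Markov chains, with input $t_{\rm mix}(M)/\mathbb{E}_\pi(T_D)=o(1)$.
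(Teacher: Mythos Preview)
Your halving-epoch scheme for the first regime is workable (and note the ``near-independence'' you invoke is actually unnecessary: since each indicator $X_i\in\{0,1\}$ one has trivially $\mathbb{E}[(\sum X_i)^2]\le m^2$, which together with $\mathbb{E}[\sum X_i]\ge m/3$ already suffices for Paley--Zygmund). But the paper's route is considerably simpler: from Corollary \ref{cor_large_k_meeting} and a single mixing period one gets a \emph{worst-case} expected meeting time
\[
t_{\rm meet}:=\max_{x,y,g}\mathbb{E}_{x,y,g}(T_D)=\mathbb{E}_\pi(T_D)+O\!\left(\tfrac{\log N}{\kappa}\right)=O(N\log N),
\]
then the standard exponential tail for hitting times (\cite{aldous-fill-2014}, Equation~2.20) gives $\mathbb{P}(T_D(1,j)>t)\le e\exp(-t/(e\,t_{\rm meet}))$, and a union bound over the $N-1$ meetings with walker~$1$ yields $\mathbb{E}(T_{\rm coal})\le e(2+\log N)t_{\rm meet}$. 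No second moment, no epoch iteration.

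The real gap is in the second regime $\kappa\ge 1/\log N$. You attempt to shave a $\log N$ factor by a mean-field or Oliveira-type argument on top of the pair-meeting bound, and you yourself identify the obstacle: justifying approximate independence of meeting events for walkers sharing a common dynamic graph. The paper does \emph{not} try to sharpen the meeting-based argument here. Instead it observes that when $\kappa$ is this large one can abandon the giant component entirely and invoke Proposition~\ref{prop_subcrit_coal}, whose mechanism is the update-induced strong stationary times of Lemma~\ref{lem_sep} together with the birthday problem. That proposition gives $\mathbb{E}(T_{\rm coal})=O(N/\kappa+N\log N)$, which for $\kappa\ge 1/\log N$ is $O(N\log N)$ directly. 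Note also that this route requires no hypothesis on $\beta,\gamma$, matching the statement of the corollary where the condition $\beta>1-2\gamma$ appears only in the first case; your approach through Corollary~\ref{cor_large_k_meeting} would at best cover only supercritical parameters.
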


\begin{proof}
In Corollary \ref{cor_large_k_meeting} we bounded meeting from stationarity, but by waiting for a mixing period we see
\[
t_{\rm meet}
:=
\max_{x,y,g}\mathbb{E}_{x,y,g}\left(T_D\right)
=
\mathbb{E}_\pi\left(T_D\right)
+
O\left(
\frac{1}{\kappa} \log N
\right)
\]
so that the worst case initial condition has the same order.

We now consider the coalescence time. Number a walker for each $i \in [N]$ by initial state $W_0^{(i)}=i$, couple them after meeting by following the path of the walker of lowest index, and write $T_D(i,j)$ for the first meeting of walkers $i$ and $j$. This construction produces
\[
T_{\rm coal}
\leq
\max_{j\geq 1}
T_D(1,j)
\implies
\p\left(
T_{\rm coal}>t
\right)\leq
1 \wedge
\sum_{j\geq 1}
\p\left(
T_D(1,j)>t
\right).
\]

By \cite{aldous-fill-2014}, Equation 2.20,
\[
\sum_{j\geq 1}
\p\left(
T_D(1,j)>t
\right)
\leq
N 
\exp\left(1-
\frac{t}{e t_{\rm meet}}
\right)
\]
from which it follows %
\[
\mathbb{E}\left(
T_{\rm coal}
\right)
\leq\int_0^\infty
1 \wedge
N e
\exp\left(-
\frac{t}{e t_{\rm meet}}
\right){\rm d}t
=e(2+\log N)t_{\rm meet}.
\]

This bound applies to every $\kappa \geq \frac{\log^7 N}{N}$, but in the second case of the claim we observe that at those large $\kappa$ values we no longer need to use the giant, via Proposition \ref{prop_subcrit_coal}.
\end{proof}

That result finishes one case of this section. In the other of a faster graph dynamic, we will be interested in finding the correct order without any polylogarithmic correction. First we note a simplifying stochastic upper bound.

\begin{lemma}\label{lem_poisexp}
For any $\mu, \lambda>0$
\[
\operatorname{Pois}(\mu) \preceq \lceil \mu e^\lambda \rceil + \operatorname{Exp}(\lambda).
\]
\end{lemma}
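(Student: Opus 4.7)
The plan is to verify stochastic domination directly by comparing tail probabilities: for $X \sim \operatorname{Pois}(\mu)$ and $Y \sim \operatorname{Exp}(\lambda)$ independent, show that
\[
\p(X > t) \leq \p(\lceil \mu e^\lambda \rceil + Y > t) \qquad \text{for all } t \geq 0.
\]

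For the trivial range $t < \lceil \mu e^\lambda \rceil$ the right-hand side equals $1$, so there is nothing to prove. The only interesting range is $t \geq \lceil \mu e^\lambda \rceil$, where writing $s := t - \lceil \mu e^\lambda \rceil \geq 0$ we must check $\p(X > t) \leq e^{-\lambda s}$.

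For this I would use the Chernoff moment bound. The Poisson moment generating function is $\mathbb{E}(e^{\lambda X}) = \exp\bigl(\mu(e^\lambda - 1)\bigr)$, so Markov's inequality gives
\[
\p(X > t) \leq e^{-\lambda t} \exp\bigl(\mu(e^\lambda - 1)\bigr).
\]
Rearranging the desired inequality $e^{-\lambda t}\exp(\mu(e^\lambda-1)) \leq e^{-\lambda(t - \lceil \mu e^\lambda\rceil)}$ reduces to showing
\[
\mu(e^\lambda - 1) \leq \lambda \lceil \mu e^\lambda \rceil.
\]
Since $\lceil \mu e^\lambda \rceil \geq \mu e^\lambda$, it suffices to show $\mu(e^\lambda - 1) \leq \lambda \mu e^\lambda$, equivalently the elementary inequality $1 - e^{-\lambda} \leq \lambda$ for $\lambda \geq 0$, which is immediate from convexity of $e^{-\lambda}$ (or integrating $e^{-x} \leq 1$ on $[0,\lambda]$).

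There is no real obstacle: the ceiling is tuned precisely so that the Chernoff bound matches the exponential tail at the correct threshold. The only subtlety worth flagging is that one should not try to compare densities directly (Poisson is discrete, exponential continuous); working with tails as above avoids this issue cleanly.
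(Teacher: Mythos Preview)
Your proof is correct. The paper takes a different but equally short route: it first observes that $\lfloor \operatorname{Exp}(\lambda)\rfloor$ is geometric with parameter $1-e^{-\lambda}$, so (since the Poisson is integer-valued) it suffices to dominate $X$ by $\lceil\mu e^\lambda\rceil+\operatorname{Geom}(1-e^{-\lambda})$. It then compares successive mass ratios: the Poisson ratio $\p(X=x)/\p(X=x-1)=\mu/x$ is at most the geometric ratio $e^{-\lambda}$ exactly when $x\ge\mu e^\lambda$, which yields the domination above the ceiling via the standard implication from likelihood-ratio order to stochastic order. Your Chernoff argument is more analytic and fully self-contained, while the paper's is combinatorial and makes transparent why the threshold $\mu e^\lambda$ is the natural crossover; either way one arrives at the same constant in the ceiling.
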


\begin{proof}
To upper bound a discrete distribution by an exponential we must upper bound by the floor which is $\operatorname{Geom}(1-e^{-\lambda})$. 
The ratio between successive Poisson masses at $x-1$ and $x$ is $\nicefrac{\mu}{x}$, which is at least as steep a decay as the geometric when $x \geq \mu e^\lambda$. 
\end{proof}

Now, we need to control the mixing of a system of $N$ walkers sharing a graph, similarly to Corollary \ref{corr_mixing}.

\begin{lemma}\label{lemma_full_mix}
Write $M^{(N)}$ for the Cartesian product of $N$ walkers (without coalescence) and the dynamic graph which they all inhabit. We find when $\nicefrac{1}{\kappa}=\Omega(N)$ that
\[
t_{\rm mix}\left(
M^{(N)}
\right)=
O\left(\frac{1}{\kappa }\log N\right).
\]
\end{lemma}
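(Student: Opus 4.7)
The plan is to extend the two-walker mixing argument of Corollary~\ref{corr_mixing} to $N$ walkers, exploiting that under $\nicefrac{1}{\kappa}=\Omega(N)$ the unit-time wait of Lemma~\ref{lem_sep} is a negligible fraction of $\nicefrac{1}{\kappa}$. Set $T := C\kappa^{-1}\log N$ for a large constant $C$; I will show that by time $T$ every walker and the graph individually achieve strong stationary times, which assemble into a strong stationary time for the joint chain.

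The joint stationary distribution of $M^{(N)}$ is $\cL_{\rm SNR}\otimes\pi^{\otimes N}$ with $\pi$ uniform on $[N]$, because conditionally on any graph trajectory the $N$ walkers evolve as independent VSRWs each with uniform stationary law. The graph marginal is within $o(1)$ TV of $\cL_{\rm SNR}$ by time $T$ thanks to Proposition~\ref{prop_graph_mixing}. For each walker $i\in[N]$ separately, updates at its current vertex arrive at rate $\kappa$ regardless of position, so by Poisson concentration it sees at least $(C/2)\log N$ such updates during $[0,T]$ with probability $1-N^{-\Omega(C)}$. Each such update triggers an independent trial of the coupling of Lemma~\ref{lem_sep}, which succeeds with probability $\nu_{\beta,\gamma}=\Omega(1)$; the inter-trial unit-time delay is $o(\kappa^{-1})$ and so does not crowd out the budget. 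Hence walker $i$ fails to achieve stationarity before $T$ with probability at most $(1-\nu_{\beta,\gamma})^{(C/2)\log N}=N^{-\Omega(C)}$, and a union bound over $i\in[N]$ with $C$ large enough that $\Omega(C)>1$ yields that every walker individually reaches stationarity by $T$ with probability $1-o(1)$.

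The main subtlety will be promoting these individual strong stationary times into a joint one. The cleanest resolution is to note that the coupling of Lemma~\ref{lem_sep} for walker $i$ is driven only by the Poisson randomness of $W^{(i)}$'s own clock together with graph updates; since the walkers are conditionally independent given the graph trajectory, the individual strong-stationarity events are conditionally independent across $i$ given $G_\kappa(\cdot)$. Therefore at $T^\ast := \max_i T^\ast_i \leq T$ the walker positions are conditionally independent uniforms given the graph, which when integrated against the (already stationary) graph law delivers the joint stationary $\cL_{\rm SNR}\otimes\pi^{\otimes N}$. Combining with the error from Step~1, the joint chain is within $o(1)$ TV distance of stationarity at time $T=O(\kappa^{-1}\log N)$, as claimed.
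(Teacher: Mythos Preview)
Your per-walker argument is fine: updates at the current site do arrive as a rate-$\kappa$ Poisson process and the union bound over $N$ walkers goes through. The gap is in the last paragraph. Lemma~\ref{lem_sep} is an \emph{annealed} statement: the minorisation $\p(W_1=j\mid\text{update at }W_0)\ge\nu/N$ is obtained by averaging over the fresh neighbourhood that the update creates. Conditionally on the graph trajectory, after a single update at $i$ the walker can only reach the (now fixed) neighbours of $i$, a set of typical size $O(w(i))$, so the quenched law $p_i^{(t)}$ is nowhere near uniform. Your conditional-independence observation is correct---given $G_\kappa(\cdot)$ the positions are independent with laws $p_i^{(T^*)}$---but those laws are not uniform, and the graph randomness that makes each one uniform on average is the \emph{same} realisation for every walker, which re-introduces dependence when you integrate it out. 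Marginal strong stationary times for the individual walkers simply do not assemble into a strong stationary time for $M^{(N)}$: what would be needed is a product minorisation of the joint law, and that does not follow from the $N$ one-dimensional minorisations when the walkers share the environment.

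The paper confronts exactly this by tracking the quenched laws $p_i^{(t)}$ (functions of the graph history) and arguing that with high probability $\max_v|1-Np_1^{(t)}(v)|$ is small; once each $p_i^{(t)}$ is close to uniform \emph{as a random measure}, conditional independence of the walkers legitimately yields joint mixing. The occupancy/martingale/order-statistic machinery via Lemma~\ref{lem_poisexp} is then used to show that a constant fraction of the not-yet-close $p_i$'s see an update in each $O(\nicefrac{1}{\kappa})$ window, and this geometric reduction to $O(1)$ remaining walkers replaces your direct union bound.
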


\begin{proof}
Let $k \leq N$ walkers start in some arbitrary configuration on $[N]$, i.e. each walker with label $i \in [k]$ has some $w_i \in [N]$ with
$
p_i^{(0)}(v)=\delta_{v w_i}.
$ 
The forward distributions $p_i^{(t)}$ are a function of the dynamic graph history, and so is
\[
P^{(t)}(v):=\frac{1}{k} \sum_{i=1}^k p_i^{(t)}.
\]

For $p_1$ we have strong stationary times in Lemma \ref{lem_sep} which bound the separation distance by \cite{wilmer2009markov}, Lemma 6.12 (or indeed the proof of Lemma \ref{lem_sep} was by bounding the separation). Hence, assuming $\kappa=o(1)$, we have
\[
\p\left(
\max
\left|
1-N p_1^{(t)}
\right|
> \frac{1}{5} \right)
\leq e^{-\Omega(t\kappa)}
\]
where the factor $N$ is the constant $\nicefrac{1}{\pi}$.

$P$ is not constructed as the distribution of a Markov chain. However, it has the exact same Markov generator as $p_1$ and so the same separation statement is true for $P$.

Write $\cF_t$ for the $\sigma$-algebra generated by the graph history up to time $t$, and note $P^{t}$ and $p^{t}_i$ are $\cF_t$-measurable. By applying \cite{mcdiarmid1998concentration}, Theorem 2.3(c),
\[
\p\left(
v\text{ occupied at time }t\,
\big| \, \cF_t
\right)
\geq
1-e^{-\frac{1}{2} k P^{(t)}(v)}
\]
and so by applying the separation control at time $t_0=\nicefrac{\mu}{2\kappa}$
\[
\p\left(
v\text{ occupied at time }t_0
\right)
\geq
\left(
1-e^{-\Omega(t_0\kappa)}
\right)
\left(
1-e^{- \nicefrac{2k}{5N}  }
\right)
\geq \frac{k}{4N}
\]
for some sufficiently large $\mu>0$.

Write $\cO(t)$ for the total time spent occupied by all sites in $[N]$ over the interval $[0,t]$, and $\cU(t)$ for the total number of updates seen at occupied sites. An update at time $t$ will influence the future distribution of $\cO$, but nonetheless
\[
\kappa\cO(t)-\cU(t)
\]
is a martingale. Moreover, if we modify to $\tilde{\cO}$ and $\tilde{\cU}$ which stop recording for time $1$ after the first update of each walker this is still true, and we have
\[
\mathbb{E}(\tilde{\cO}(2t_0))\geq N \cdot (t_0-k) \cdot \frac{k}{4N}.
\]

Additionally, the number of updates seen by each walker is dominated by an independent $\operatorname{Pois}(2\kappa t_0)=\operatorname{Pois}(\mu)$. Consider the order statistics of these Poissons ${P_{(1)}<\dots<P_{(k)}}$, which by Lemma \ref{lem_poisexp} are bounded by the order statistics of an exponential. The expected order statistics of an exponential of rate $1$ have the simple expression
\[
\mathbb{E}(E_{(i)})= H_k-H_{k-i}
\]
in terms of harmonic numbers $H_b=\sum_{a=1}^b\nicefrac{1}{a}$, and hence
\[
\mathbb{E}\left(P_{(i)}\right)\leq
\lceil \mu e^\lambda \rceil
+
\frac{1}{\lambda}\left(
 H_k-H_{k-i}
 \right).
\]

So, the expectation of the $j$ largest Poissons is bounded by
\[
\mathbb{E}\left(
\sum_{i=k-j+1}^k
P_{(i)}\right)\leq
j\lceil \mu e^\lambda \rceil
+
\frac{j}{\lambda}
\log
\left(
 \frac{ek}{j}
 \right)
\]
and from this, writing $\cE_j(t)$ for the event that at most $j$ walkers see an update in $[0,t]$, we use the upper bound by independent Poisson processes to infer 
\[
\begin{split}
\mathbb{E}\left(\tilde{\cU}(t)\right)
&=
\mathbb{E}\left(\tilde{\cU}(t));\cE_j(t)\right)
+
\mathbb{E}\left(\tilde{\cU}(t));\cE_j^c(t)\right)\\
&\leq
j\lceil \mu e^\lambda \rceil
+
\frac{j}{\lambda}
\log
\left(
 \frac{ek}{j}
 \right)
+
(j+1+k\mu) \p\left(\cE_j^c\right)\\
\end{split}
\]
which is the desired lower bound on the probability to see update events of Lemma \ref{lem_sep} for $j+1$ walkers. 
Rearranging, what we have exactly is
\[
\begin{split}
\p\left(\cE_j^c(2t_0)\right)
&\geq 
 \frac{\frac{1}{4}\kappa k(t_0-N)
-j\lceil \mu e^\lambda \rceil
-
\frac{j}{\lambda}
\log
\left(
 \frac{ek}{j}
 \right) }{j+1+k\mu}
\\
&\geq 
 \frac{\frac{\mu-1}{4}
-\frac{1}{2}(1+\mu e^\lambda)
-
\frac{1}{2\log \lambda}
\log
\left(
2e
 \right) }{1+\mu}
 \geq 
 \frac{\frac{\mu-1}{4}
-1
-
\frac{1}{2\log \nicefrac{1}{\mu}}
\log
\left(
2e
 \right) }{1+\mu}
 \geq \frac{1}{10}, \\
\end{split}
\]
the final line by setting $j \leq \nicefrac{k}{2}$, then $\lambda=\log\nicefrac{1}{\mu}$, and finally $\mu \geq 6$.

 Achieving a constant proportion of walkers mixed with positive probability every timestep of length $\nicefrac{6}{\kappa}$ is an algorithm that will get to constantly many unmixed walkers in time $O(\frac{1}{\kappa}\log N)$, from where it remains to apply Corollary \ref{corr_mixing} constantly many times until the final walkers and the graph are also stationary.
\end{proof}

The consequent small $\kappa$ case introduces a technical assumption. As this case uses the giant component to meet, we assume that $\beta \geq 3$ to give a giant component of size at least $\nicefrac{9N}{10}$.

\begin{proposition}\label{prop_small_k_coal}
When $\beta \geq 3$, $\alpha > 1$ and $\kappa=\Theta(N^{-\alpha})$
\[
\mathbb{E}\left(
    T_{\rm coal}
    \right)=
    O\left(  \frac{1}{\kappa} \log N \right).
\]
\end{proposition}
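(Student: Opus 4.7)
The plan is to reduce Proposition~\ref{prop_small_k_coal} to a worst-case meeting bound and then apply the Aldous-style argument from the proof of Corollary~\ref{cor_super_fast_upper}. Writing $t_{\mathrm{meet}} := \sup_{x,y,g} \mathbb{E}_{x,y,g}(T_D)$, that argument (which only needs reversibility of the two-walker chain $M$) delivers $\mathbb{E}(T_{\mathrm{coal}}) \leq e(2+\log N)\, t_{\mathrm{meet}}$, so it suffices to show $t_{\mathrm{meet}} = O(1/\kappa)$.

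To bound $t_{\mathrm{meet}}$, I would use that for $\beta \geq 3$ the stationary giant has size at least $9N/10$, and (by a union bound over the $O(N\log N)$ updates in the relevant window) remains that large throughout $[0,(C/\kappa)\log N]$. From any initial condition $(x,y,g)$ each walker's vertex updates at rate $\kappa$; at each such update the fresh Poisson neighborhood of mean $w(v)\geq \beta/(1-\gamma)\geq 3/(1-\gamma)$ attaches to the giant with probability bounded below by some $\rho>0$, giving the walker a constant probability of being on the giant for a positive fraction of an epoch of length $C_0/\kappa$. Once both walkers are simultaneously on the giant, they meet in further expected time $O(N\log^C N)$ by the static supercritical meeting bound implicit in the proof of Theorem~\ref{theorem_static_cons} in \cite{fernley2022discursive}, which for $\alpha>1$ is $o(1/\kappa)$ and therefore fits comfortably inside the epoch. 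Geometric decay over epochs then yields $t_{\mathrm{meet}} = O(1/\kappa)$ and hence the claim.

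The main obstacle is transferring the static supercritical meeting bound to the dynamic graph in the regime $1<\alpha\leq 2$, where $\Theta(N^{2-\alpha}\log^C N)$ updates fall inside a meeting window and the graph is not genuinely static. I plan to handle this by coupling the dynamic walker to a static walker on the giant frozen at the start of the epoch, and using a Chernoff bound on the number of updates that touch the walker's current neighborhood: the per-walker rate of disruptive updates is $O(\kappa \log^C N) = o(1)$ on this timescale, so the coupling rarely breaks and the dynamic meeting time inherits the static order. A cleaner (but less self-contained) alternative would be to extract an upper bound on $t_{\mathrm{meet}}$ directly from the electrical-network argument in Proposition~\ref{prop_escape}, whose conductance estimates are $\kappa$-independent and should transfer without a static coupling.
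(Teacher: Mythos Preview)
Your reduction to the worst-case meeting time cannot deliver the target order. The crucial claim $t_{\mathrm{meet}}=O(1/\kappa)$ is not established by your argument: you write that after a single vertex update the walker ``attaches to the giant with probability bounded below by some $\rho>0$'', but $t_{\mathrm{meet}}=\sup_{x,y,g}\mathbb{E}_{x,y,g}(T_D)$ is a supremum over \emph{all} initial graphs, including ones with no giant whatsoever (e.g.\ the empty graph). From such $g$ a single update at the walker's position produces only $\Theta(1)$ edges to otherwise-isolated vertices; there is no macroscopic structure to attach to. Your union-bound-over-updates argument that the giant persists throughout $[0,(C/\kappa)\log N]$ is valid only from a stationary start, which is incompatible with the supremum over $g$ that the exponential tail of Corollary~\ref{cor_super_fast_upper} genuinely requires (that bound iterates the Markov property at arbitrary intermediate states of $M$). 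What your sketch actually supports is $t_{\mathrm{meet}}=O(\tfrac{1}{\kappa}\log N)$---a graph-mixing period of Proposition~\ref{prop_graph_mixing} followed by your giant-based meeting---and plugging this into the Aldous bound yields only $\mathbb{E}(T_{\mathrm{coal}})=O(\tfrac{1}{\kappa}\log^2 N)$, a full logarithm short.

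The paper's proof avoids the pairwise reduction entirely. It works with all $N$ walkers at once: first mixing the whole $N$-walker system in a single $O(\tfrac{1}{\kappa}\log N)$ period via Lemma~\ref{lemma_full_mix}, then freezing the graph for a $\log^{19}N$ window (update-free with high probability precisely because $\alpha>1$ makes $\kappa N\log^{19}N\to 0$), letting all walkers mix on the static giant, and harvesting $\Omega(N)$ simultaneous coalescences via the birthday Lemma~\ref{lem_advanced_birthday}. Handling a constant fraction of walkers per round rather than one pair is exactly what saves the $\log N$ that your union bound over $j$ would lose. Note too that your self-identified ``main obstacle''---coupling dynamic to static walks over a meeting window of length $N\operatorname{polylog}(N)$ when $1<\alpha\le 2$---is sidestepped completely: the paper only ever needs a $\operatorname{polylog}(N)$ static window, short enough that \emph{no} update occurs, so no coupling argument is required.
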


\begin{proof}

We build a good set $S$ for $N$ non-coalescing walkers sharing a dynamic graph, i.e. the chain $M^{(N)}$ of Lemma \ref{lemma_full_mix}, defined by properties:
\begin{itemize}
\item $|\sC_{\rm max}|\geq\nicefrac{9N}{10}$;
\item The one-walker mixing of the giant has $t_{\rm mix}(\sC_{\rm max})\leq \log^{17} N$;
\item $\nicefrac{4N}{5}$ walkers are in the set $\sC_{\rm max}$.
\end{itemize}

Note, from Theorem 3.1 of \cite{bollobas07}, $\beta>3$ gives high probability giant proportion at least $0.94$, and Theorem 1.9 of \cite{fernley2022discursive} shows that the mixing of the largest component is of high probability order  $O_{\p}(\log^{16} N)$. For the last condition, conditionally we place at least $\operatorname{Bin}(N,\nicefrac{9}{10})$ stationary walkers in the giant which is concentrated around $\nicefrac{9N}{10}$. Hence $S$ is a high probability set for the stationary chain 
$
\pi(S) \rightarrow 1.
$

So after a mixing period of Lemma \ref{lemma_full_mix} we are with high probability in the set $S$. If so, \emph{freeze} the graph for time $\log^{19} N$. In this time the graph updates $\operatorname{Pois}\left(
 \kappa N\log^{19} N
\right)$ times, so it doesn't update with probability
\[
e^{- N^{1-\alpha}\log^{19} N}\geq 1-N^{1-\nicefrac{\alpha}{2}}.
\]

On the event that the graph stays frozen, after time $\log^{19} N$ each walker is coupled to stationarity with probability
\[
1-e^{-\Omega(\log^2 N)}=1-o\left(
\frac{1}{N}
\right)
\]
and so with high probability we find all (at least $\nicefrac{4N}{5}$) walkers uniform on $\sC_{\rm max}$. 
Recalling Lemma \ref{lem_advanced_birthday}, at a single point in time we then find \[\frac{\left(\nicefrac{4N}{5}\right)^2}{6N}=
\frac{8N}{75}\] coalescences with probability $\nicefrac{1}{73}$. Thus in time \[
O\left(\frac{1}{\kappa }\log N\right)+\log^{19} N=
O\left(\frac{1}{\kappa }\log N\right)
\]
we have $\Omega(N)$ coalescences with positive probability. Whether we see them or not, reset with a further period $O\left(\frac{1}{\kappa }\log N\right)$ and retry.

Because this mixing period allows us to forget which walker has which label, every set of $\Omega(N)$ coalescences is taken uniformly in the $N$ walkers and this altogether we have constructed the Wright--Fisher coalescent on generational timescale $O\left(\frac{1}{\kappa }\log N\right)$.
\end{proof}

{\bf Acknowledgements.}
Thanks to Peter M\"orters for suggesting this problem. 
This research was supported by National Research, Development and Innovation Office grant KKP 137490.

\bibliographystyle{apalike}

\end{document}